\newtheorem{Thm}{Theorem}
\newtheorem{re}[Thm]{Remark}
\newtheorem{lem}[Thm]{Lemma}
 \numberwithin{equation}{section}
 \numberwithin{Thm}{section}
\newcommand{\be}{\begin{equation}}
\newcommand{\ee}{\end{equation}}
\newcommand\bes{\begin{eqnarray}}
\newcommand\ees{\end{eqnarray}}
\newcommand{\bess}{\begin{eqnarray*}}
\newcommand{\eess}{\end{eqnarray*}}
\begin{document}

\title[The Fisher-KPP equation over simple graphs]
{The Fisher-KPP equation over simple graphs: Varied persistence states in river networks}
\author[Y. Du, B. Lou, R. Peng and M. Zhou]{Yihong Du,\ \ Bendong Lou,\ \ Rui Peng\ \, and\ \, Maolin Zhou}

\thanks{{\bf Y. Du}: School of Science and Technology,
University of New England, Armidale, NSW 2341, Australia. Email: {\tt
ydu@une.edu.au}}

\thanks{{\bf B. Lou}: Mathematics and Science College, Shanghai Normal University,
Shanghai, 200234, China. Email: {\tt
lou@shnu.edu.cn}}

\thanks{{\bf R. Peng}: School of Mathematics and Statistics, Jiangsu Normal University,
Xuzhou, 221116, Jiangsu Province, China. Email: {\tt
pengrui\,$\b{}$\,seu@163.com}}

\thanks{{\bf M. Zhou}:  School of Science and Technology,
University of New England, Armidale, NSW 2341, Australia. Email: {\tt
zhouutokyo@gmail.com}}

\thanks{ We thank the two anonymous referees for their suggestions and comments which helped to improve the presentation of the paper.}
\thanks{This research  was partially supported by the Australian Research Council,
the NSF of China (No. 11671262, 11671175, 11571200), the Priority Academic Program Development
of Jiangsu Higher Education Institutions, Top-notch Academic Programs Project of Jiangsu Higher
Education Institutions (No. PPZY2015A013) and Qing Lan Project of Jiangsu Province.}

\date{\today}
\maketitle

\begin{abstract}
{ In this article, we study the dynamical behaviour of a new species spreading from a location in a river network where  two or three branches meet, based on the widely used Fisher-KPP advection-diffusion equation. This  local river system is represented by
 some simple graphs with every edge a half infinite line, meeting at a single vertex.  We obtain a rather complete description
of  the long-time  dynamical behaviour for every case under consideration, which can be classified into three different types (called a trichotomy), according to the water flow speeds in the river branches, which depend crucially on the topological structure of the graph representing  the local river system and on the cross section areas of the branches. The trichotomy includes two different kinds of persistence states, and the state called ``persistence below carrying capacity''  here appears new.

\bigskip

\noindent
{\sf Key words and phrases: }{\it
Fisher-KPP equation; River network; PDE on graph;  Long-time dynamics.}

\smallskip

\noindent
{\sf Mathematics Subject Classification: } 35P15, 35J20, 35J55.}
\end{abstract}

\setlength{\baselineskip}{16pt}{\setlength\arraycolsep{2pt}

\section{Introduction} \setcounter{equation}{0}

  The organisms living
in  a river system are subjected to the biased  flow in the downstream direction. How much stream
flow can be changed without damaging the
stream ecology,  and how stream-dwelling organisms can avoid
being washed out, are some of the key questions in
 stream ecology.  Partly motivated by these questions,
population models in rivers or
streams have gained increasing attention recently.  For example, in \cite{HL, HJL, LLL, LPL, SG}, the rivers and streams are treated as an interval on the real line, and questions on persistence and vanishing are examined via various advection-diffusion models over such an interval. As the real river systems usually also have rich topological structures,
it was argued in \cite{CGLF} that the topological structure of a river network may also greatly influence the population growth and spread of organisms living in it. Several recent papers (see, for example, \cite{JPS, R, SCA, SMA, V}) use suitable metric graphs to represent the topological structures of a river network, and study the persistence and vanishing problem by models of advection-diffusion equations over such graphs. The graphs in these works are all finite: They contain finitely many edges and vertices, and every edge has finite length. Naturally,  such finite graphs include  finite intervals as special cases.

These models over  finite graphs have several nice properties, making them very effective in analysing  the population dynamics in river systems. For instance, for a single species, with growth function  of Fisher-KPP type,  the models behave largely like the classical logistic equation, namely, the linearised eigenvalue problem at the trivial solution 0 has a principal eigenvalue, and when this eigenvalue is negative, the problem has a unique positive steady-state, which attracts all the time-dependent positive solutions, while 0 is the global attractor if this eigenvalue is nonnegative (see \cite{JPS}). Therefore, the persistence problem is reduced to the analysis of the sign of the principal eigenvalue (\cite{JPS, R, SCA, SMA}), and the long-time population profile is determined by the properties of the positive steady-state (\cite{LLL, LPL, V}).

If the finite graph is chosen to represent the entire river network, these models can be used to describe the evolution of a population over the global
network. On the other hand, if the population exists only in a local part of a complex river system, or one is only concerned with the population dynamics over such a local area, then the finite graph in the model can represent only a part of a more complex graph, if the boundary conditions are properly chosen; for example, it can be just a finite interval (\cite{LLL, LPL, SG}), or a finite $Y$-shaped graph (\cite{V}).

For a new or invasive species in a river system, it is of great interest to know how it invades the system. This kind of questions are better answered through
similar models but over unbounded spatial regions.
 Starting from the pioneering works of Fisher \cite{F} and Kolmogorove, Peterovsky and Piscunov \cite{KPP}, the spreading problem has been modelled successfully by reaction-diffusion equations over the entire Euclidean space $\mathbb R^N$. For example, if a new species spreads from the middle of a long single river branch of a possibly much bigger and complex river network, for a considerable period of time (depending on the spreading speed of the species),
 the dynamics of the species can be modelled by the following Cauchy problem:
\begin{equation}\label{tr-cauchy}
u_t-Du_{xx}+\beta u_x=f(u) \mbox{ for } x\in\mathbb R,\; t>0;\;\; u(0,x)=u_0(x) \mbox{ for } x\in\mathbb R,
\end{equation}
where $u(t,x)$ stands for the population density of the species, $D>0$ stands for its diffusion rate, and the drifting term $\beta u_x$ is caused by the river flow at a speed proportional to $\beta> 0$ in the increasing $x$ direction (hereafter we will call $\beta$ the water flow speed of the river branch, as this can be achieved by a simple rescaling), and $u_0(x)$ is a nonnegative function, not identically 0 and with compact support
(representing the initial population range),
 $f(u)$ is the growth function, typically of Fisher-KPP type. Here the entire real line $\mathbb R$ is used to represent the long  river branch in which the new species spreads initially. As we will explain in more detail below, the solution $u(t,x)$ of \eqref{tr-cauchy} quickly evolves into a wave shaped function moving with a certain speed. This kind of information about the spreading process of the species  is difficult to capture by  models over a finite spatial region.

One naturally wonders what would happen to \eqref{tr-cauchy} if the real line $\mathbb R$ is replaced by a graph, now with at least some edges of infinite length.
In this paper, we will address this question by examining the population dynamics of a new species spreading from a location in a river system where two or three long river branches meet, and so we will
consider similar advection-diffusion equations to \eqref{tr-cauchy}, but with $\mathbb R$ replaced by several simple graphs, with every edge having {\it infinite} length; see Figure 1.

\begin{figure}\label{riverbranches}
\begin{center}
\includegraphics[scale=0.70]{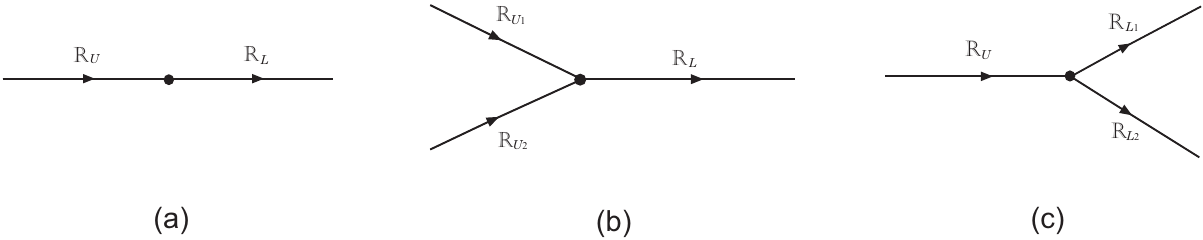}
\end{center}
\caption{\small \sf {\bf (a)} River system with lower-branch $\mathbb{R}_L$ and upper-branche $\mathbb{R}_U$.
{\bf (b)} River system with  lower-branch $\mathbb{R}_L$ and upper-branches $\mathbb{R}_{U_1}, \mathbb{R}_{U_2}$.\ \  {\bf (c)}  River system with upper-branch $\mathbb{R}_{U}$ and lower-branches $\mathbb{R}_{L_1}, \mathbb{R}_{L_2}$.}
\end{figure}

In this setting, the local river system becomes the entire world for the concerned species, and therefore, the dynamics determined by the model is only good
during the spreading phase of the species in the local system. Once the species has spread beyond the local system, into the bigger river network, its behaviour should be described by a different model; for example, by a finite graph model with the graph representing the entire river network.

Our results  reveal several new features of the population dynamics.
Firstly, for every case considered here, whenever a positive steady-state exists, there are infinitely many of them, which contrasts sharply with
the corresponding finite graph models and \eqref{tr-cauchy}. Secondly, only one of the infinitely many positive steady-states attracts the time-dependent positive solution
with any compactly supported initial population.
Thirdly, these globally attractive  steady-states can be classified into three types, completely determined by the water flow speeds in the concerned river branches.

These results indicate that the topological structure of the (local) river system, and the cross section area of every river branch, all contribute significantly to the dynamics of the population. Moreover, these contributions can be easily checked from the assumptions on the (local) river system.

In order to provide a more precise comparison, we now give some details on the dynamical behaviour of \eqref{tr-cauchy}.
 For simplicity of discussion, we will take $f(u)=u(1-u)$ below.

Under the above assumptions, the dynamical behavior of \eqref{tr-cauchy} is completely understood.
Indeed, if we define $v(t,x):=u(t, x+\beta t)$, where $u(t,x)$ is the unique solution of \eqref{tr-cauchy}, then
\begin{equation}\label{tr-v}
v_t-Dv_{xx}=f(v) \mbox{ for } x\in\mathbb R,\; t>0;\;\; v(0,x)=u_0(x) \mbox{ for } x\in\mathbb R.
\end{equation}
This is exactly the classical Fisher-KPP equation.
It is well known that
\[
\lim_{t\to\infty} v(t,x)=1 \mbox{ locally uniformly in } x\in\mathbb R.
\]
Moreover, if we denote $c_*:=2\sqrt{D}$, then the following ODE problem
\[
-D\phi_{xx}+c\phi_x=f(\phi) \mbox{ for } x\in \mathbb R,\; \phi(-\infty)=1,\; \phi(+\infty)=0,\; \phi(0)=1/2
\]
has a unique solution $\phi_c$ if $c\geq c_*$, and it has no solution if $c< c_*$.\footnote{It is also well known that \eqref{tr-cauchy} has two positive steady-states
$\phi_\beta$ and 1 if $\beta\geq c_*$, and 1 is the only positive steady-state if $0<\beta<c_*$. } \ Furthermore, there exists $C_{\pm}\in \mathbb R$ (depending on $u_0$)
such that
\begin{equation}\label{shift-v}
\lim_{t\to\infty}\left[\sup_{x\in\mathbb R_{\pm}}\Big|v(t,x)-\phi_{c_*}\Big(\pm(x-c_*t+\frac{3}{c^*}\log t)+C_{\pm}\Big)\Big|\right]=0.
\end{equation}
Therefore, roughly speaking, $v(t,x)$ behaves like the traveling wave $v_{c_*}(x-c_*t)$ with speed $c_*$ in the right direction, and like the traveling wave
$\phi_{c_*}(c_*t-x)$ in the left direction.

These well known facts may be found, for example, in \cite{AW, HNRR}.
Using \eqref{shift-v} and $u(t,x)=v(t, x-\beta t)$, we immediately see that
\[
\lim_{t\to\infty} u(t,x)=0 \mbox{ locally uniformly in } x\in\mathbb R \mbox{ if } \beta\geq c_*,
\]
and
\[
\lim_{t\to\infty} u(t,x)=1 \mbox{ locally uniformly in } x\in\mathbb R \mbox{ if }  \beta< c_*.
\]
In other words, to an observer whose position is fixed on any location of the river bank, the ultimate population density of the species
either

(I) vanishes (washed out by the waterflow) if  the waterflow speed  satisfies $\beta\geq c_*$, or

(II) stablizes at the (normalized) carrying capacity 1 if $\beta<c_*$.

\noindent
 Let us also recall from footnote 1 that \eqref{tr-cauchy} has one or two positive steady-states, depending on whether $\beta<c_*$ or $\beta\geq c_*$.

When $\mathbb R$ in \eqref{tr-cauchy} is replaced by one of the graphs in Figure 1, we will show that  the population may persist in a very different fashion to that exhibited by \eqref{tr-cauchy} above. As we already mentioned, now the problem has infinitely many positive steady-states whenever one exists, but only one of them can be the global attractor. We may now add that the global attractor in every case is a nonnegative steady-state, and these global attractors can be classified into three types, which indicates that the long-time dynamical behaviour of the problem is governed by a
 trichotomy, according to the water flow speeds of the river branches, namely,

 {\bf (i) washing out:}  if the water flow speed in every branch is no less than $c_*$, then the population will be washed out in every branch as in case (I) for \eqref{tr-cauchy},

  {\bf (ii) persistence at carrying capacity:}  if the water flow speed in every upper branch is smaller than $c_*$, then the population will persist at its carrying capacity 1 in each branch as in case (II) for \eqref{tr-cauchy},

  {\bf (iii) persistence below carrying capacity:} in all the remaining cases, the population will persist at a positive steady-state  strictly below the carrying capacity in every branch.

Let us now try to explain the above results from a biological point of view.  If the  volume of the water flow into the concerned local system per unit time is fixed, and if we assume that there is no gain or loss of water in the local system, then the water flow speed in each of the river branches in the local system is determined by the topological structure of the local system, as well as the cross section areas of the branches. For the cases in Figure 1, if every branch has a small enough cross section area, then the water flow speed in every branch will be greater than $c_*=2\sqrt{D}$ (which depends only on the diffusion rate $D$ of the species), and so  in such a case the population will be washed down stream and locally stabilise at 0 in every branch.  If the cross section areas of the upper branches are all big, then the water flow speed in every upper branch will be smaller than $c_*$, and the above result predicts that the population will locally stabilise at the carrying capacity 1 in every branch,
 regardless of the water flow speed in the lower branches. At first sight, this might be a little counter intuitive when the water flow speed is  fast in the lower branches.
However, the population in the upper branches constantly propagates into the lower branches through the junction point, and due to the infinite length of the upper
river branches, this population source is big enough to feed into the lower branches in a way that  the population stabilises locally at the carrying capacity in every branch.
This case can also be used to reveal the difference of the topological structure on the population dynamics; for example, in the extreme case that the cross section areas of the river branches are all the same,  then with the same water flow volume per unit time through the local system, the water flow speed in the upper river branches will be different in the different cases (a), (b) and (c) in Figure 1. This difference is clearly caused by the different topological structures of the river branches
in (a), (b), and (c), respectively, and has significant dynamical consequences for the population.

The remaining cases are characterised by  at least one of the branches having water flow speed below $c_*$ and at least one upper branch having water flow speed no less than $c_*$.  The former guarantees that the population can at least persist in that river branch while the latter is the reason that the population cannot stabilise at the carrying capacity locally.
 So although the fast flowing upper branch can benefit from population in slow flowing lower branches, the local population level in such a case cannot reach the carrying capacity. This contrasts to the situation in alternative (ii) above, where the local population level can reach the carrying capacity even if the water flow speed in a lower branch is above the threshold as along as the water flow speed in all upper branches is below the threshold. It is clear that the water flow direction has made the difference.

 A tricky question here is how the system selects the global attractor from infinitely many positive steady-states in the alternatives (ii) and (iii) above. The mechanism driving this selection is the compact support of the initial function. The positive steady state selected as the global attractor is always the one which decays to 0 the fastest at the proper end of the graph at infinity. This is similar in spirit to how the traveling wave with minimal speed is selected to attract the solution of the classical Fisher-KPP equation with compactly supported initial function. If the initial function does not have compact support, then in the classical Fisher-KPP problem, a traveling wave with a faster speed may be selected as the attracting profile depending on the decay rate of the initial function at infinity; similarly, if the initial function for our problem here does not have compact support, then another steady-state may be selected as the omega limit set. Indeed, if any one of the positive steady-state is chosen as the initial function, then the solution will not change with time, and so this steady-state is trivially selected as the long-time limit of the solution. Since our purpose is to understand the spreading and growth of a population starting from a bounded area in the local river system, only initial functions with compact support are of interest to us. 
\medskip

Next, we will describe the models and results more precisely, case by case. We would like to mention first  that the case of two river branches in (a) of Figure 1 is rather artificial. If there is a man made device separating a river branch into two, like a small dam or water gate, then the connecting conditions there should be different from the ones used here.  However, this is the simplest case to explain the model, and the mathematical treatment of this case already involves all the key techniques in this research. Therefore it could serve as a convenient guide for the reading of this paper. Secondly, due to the length of this paper, the question of spreading speed will not be discussed here, and is left for a future work.

\medskip

\subsection{Two river branches}

First, we consider the case that the local river system has two branches, and the waterflow in each branch has a different constant speed, but otherwise the environment is homogeneous to the concerned species. Let
$\mathbb{R}_L:=(0,+\infty)$ represent the lower river and $\mathbb{R}_U:=(-\infty,0)$
stand for the upper river. Let $w_L,\,w_U$
denote the density of the species in $\mathbb{R}_L$ and $\mathbb{R}_U$, respectively.
Then, the evolution of the species is governed by the following reaction-diffusion system:
\begin{equation}\label{two branches-a}
\begin{cases}
\partial_tw_L-D_L\partial_{xx}w_L+\beta_L\partial_xw_L=f_L(w_L), &\ x\in\mathbb{R}_L,t>0,\\
\partial_tw_U-D_U\partial_{xx}w_U+\beta_U\partial_xw_U=f_U(w_U), &\ x\in\mathbb{R}_U,t>0,\\
w_L(t,0)=w_U(t,0),&\ t>0,\\
D_La_L\partial_xw_L(t,0)=D_Ua_U\partial_xw_U(t,0),&\ t>0,\\
w_L(0,x)=w_0(x), &\ x\in \mathbb{R}_L,\\
w_U(0,x)=w_0(x),&\ x\in \mathbb{R}_U,
\end{cases}
\end{equation}
where the parameters $D_L,D_U,\beta_L,\beta_U,a_L,a_U$ are positive constants.
The constants $D_L,D_U$ are the random diffusion coefficients of the species,
$\beta_L,\beta_U$ are the advection coefficients (waterflow speeds), and $a_L,a_U$
account for the cross-section area of the river branches $\mathbb R_L$ and $\mathbb R_U$, respectively. The nonlinear reaction functions $f_L,f_U$
are assumed to be locally Lipschitz on $[0,\infty)$. The initial function $w_0$ belongs to $ C_{comp}(\mathbb{R})$,
where $C_{comp}(\mathbb{R})$ consists of continuous functions defined on $\mathbb{R}=(-\infty,\infty)$ with compact support.

Taking into account the fact that the volume of water flowing out of the upstream $\mathbb{R}_U$
is equal to that flowing into the downstream $\mathbb{R}_L$, by the conservation
of  flow at the junction point $x=0$ we have
 \begin{equation}\label{con1}
 a_L\beta_L=a_U\beta_U.
 \end{equation}
In \eqref{two branches-a}, the third line represents the natural continuity connection condition,
and the fourth line is the Kirchhoff law, which follows from  the continuity  connection condition and the conservation of flow \eqref{con1}  at the junction point $0$:
 $$
 a_L(D_L\partial_xw_L(t,0)-\beta_Lw_L(t,0))=a_U(D_U\partial_xw_U(t,0)-\beta_Uw_U(t,0)).
 $$

For the concerned species, it is reasonable to assume that its  diffusion rates and
growth rates are the same in the two river  branches. Without loss of generality,
we may assume $D_L=D_U=1$. Moreover, we assume
$f_L(w)=f_U(w)=w-w^2$ for simplicity.
Under these assumptions, problem \eqref{two branches-a} is simplified to the following one:
\begin{equation}\label{two branches}
\begin{cases}
\partial_tw_L-\partial_{xx}w_L+\beta_L\partial_xw_L=w_L-w_L^2, &\ x\in\mathbb{R}_L,t>0,\\
\partial_tw_U-\partial_{xx}w_U+\beta_U\partial_xw_U=w_U-w_U^2, &\ x\in\mathbb{R}_U,t>0,\\
w_L(t,0)=w_U(t,0),&\ t>0,\\
a_U\partial_xw_U(t,0)-a_L\partial_xw_L(t,0)=0,&\ t>0,\\
w_L(0,x)=w_0(x), &\ x\in \mathbb{R}_L,\\
w_U(0,x)=w_0(x),&\ x\in \mathbb{R}_U.
\end{cases}
\end{equation}

Given any nonnegative initial datum $w_0\in C_{comp}(\mathbb{R})$, it can be proved that
problem \eqref{two branches} admits a unique nonnegative classical solution (see Section 2 below). Our main interest is in
the long time behavior of the solution.

By a simple comparison consideration, it is easily seen that the stationary solutions $(\phi_L, \phi_U)$ of \eqref{two branches} which may determine its long-time behavior
satisfy
\begin{equation}\label{ss-0}
\left\{
\begin{array}{ll}
- \phi_L''+\beta_L  \phi_L' = \phi_L - \phi^2_L,\; 0\leq \phi_L\leq 1, & x\in \mathbb{R}_L ,\\
- \phi_U'' +\beta_U  \phi_U'= \phi_U - \phi^2_U,\; 0\leq \phi_U\leq 1, & x\in \mathbb{R}_U,\\
\phi_L (0) = \phi_U(0) =\alpha\in [0,1], & \\
a_U  \phi_U'(0) = a_L  \phi_L' (0). &
\end{array}
\right.
\end{equation}

By the maximum principle,
 $\alpha=0$ implies $(\phi_L,\phi_U)\equiv (0,0)$,
 $\alpha=1$ implies $(\phi_L,\phi_U)\equiv (1,1)$, and $\alpha\in (0, 1)$ implies
 \[
 0<\phi_L(x)<1 \mbox{ for } x\geq 0,\; 0<\phi_U(x)<1 \mbox{ for } x\leq 0.
 \]

So to have a complete understanding of \eqref{ss-0}, we only need to consider the problem
\begin{equation}\label{ss}
\left\{
\begin{array}{ll}
- \phi_L''+\beta_L  \phi_L' = \phi_L - \phi^2_L,\; 0<\phi_L<1, & x\in \mathbb{R}_L ,\\
- \phi_U'' +\beta_U  \phi_U'= \phi_U - \phi^2_U,\; 0<\phi_U<1, & x\in \mathbb{R}_U,\\
\phi_L (0) = \phi_U(0) =\alpha\in (0,1), & \\
a_U  \phi_U'(0) = a_L  \phi_L' (0). &
\end{array}
\right.
\end{equation}

The following result gives a complete description of the solutions to \eqref{ss} (for all possible cases of $\beta_L$, $\beta_U>0$).

\begin{Thm}\label{prop:2-bra}

\begin{itemize}
\item[\rm(i)] If $0< \beta_U <2$, then \eqref{ss} has no  solution for $\alpha\in (0,1)$.

\item[\rm(ii)] If $\beta_L, \beta_U \geq 2$, then for every $\alpha\in (0,1)$,
\eqref{ss} has a unique  solution.

\item[\rm(iii)]  If $\beta_U \geq  2 >\beta_L>0 $, then there exists $\alpha_0 \in (0,1)$ such that \eqref{ss} has a
unique  solution for each $\alpha\in [\alpha_0, 1)$, while \eqref{ss} has no  solution for $\alpha
\in (0,\alpha_0)$.
\item[\rm(iv)]  Whenever \eqref{ss} has a solution $(\phi_L, \phi_U)$, we have
\[
\phi'_L>0,\;\phi_U'>0,\; \phi_L(+\infty)=1,\; \phi_U(-\infty)=0.
\]
Moreover, in case {\rm(ii)} and in case {\rm(iii)} with $\alpha\in (\alpha_0, 1)$, as $x\to-\infty$, there exists some $c=c(\alpha)>0$ such that
  \begin{equation}\label{slow-u}
  \phi_U(x)=\left\{\begin{array}{ll}(c+o(1))e^{\frac{1}{2}(\beta_U-\sqrt{\beta_U^2-4}\,)x}& \mbox{ if  $\beta_U>2$,}\\
  (c+o(1))|x|e^{x}& \mbox{ if $\beta_U=2$;}
  \end{array}\right.
  \end{equation}
 while in case {\rm(iii)} with $\alpha=\alpha_0$, as $x\to-\infty$, there exists some $c>0$ such that
   \begin{equation}\label{fast-u}
 \phi_U(x)=(c+o(1))e^{\frac{1}{2}(\beta_U+\sqrt{\beta_U^2-4}\,)x}.
 \end{equation}
\end{itemize}
\end{Thm}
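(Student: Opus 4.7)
The strategy is to decouple the coupled problem \eqref{ss} into two independent half-line shooting problems, one on $\mathbb{R}_L=(0,\infty)$ and one on $\mathbb{R}_U=(-\infty,0)$, parametrised by the common value $\alpha=\phi_L(0)=\phi_U(0)$, and then impose the Kirchhoff condition $a_L\phi_L'(0)=a_U\phi_U'(0)$ as a single scalar matching equation. Each half-line problem will be analysed by phase-plane methods for the planar system $\phi'=q$, $q'=\beta q-\phi(1-\phi)$ with $\beta=\beta_L$ or $\beta_U$. In $\{0\le\phi\le 1\}$ the only equilibria are the source $(0,0)$ and the saddle $(1,0)$, and the divergence of the vector field is the constant $\beta>0$, so Bendixson--Dulac rules out periodic orbits and Poincar\'e--Bendixson gives rigid asymptotic behaviour.

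For the lower branch I would first prove that for every $\alpha\in(0,1)$ and every $\beta_L>0$ there is a unique $\phi_L$ with $0<\phi_L<1$, $\phi_L(0)=\alpha$, $\phi_L(+\infty)=1$ and $\phi_L'>0$, which defines $q_L(\alpha):=\phi_L'(0)>0$. These solutions are exactly the trajectories on the branch of $W^s(1,0)$ entering $\{0<\phi<1\}$: near $(1,0)$ this branch is tangent to the eigenvector for the negative eigenvalue $\tfrac12(\beta_L-\sqrt{\beta_L^2+4})$, forcing $q>0$; the standard argument that $\phi''<0$ at any critical point with $\phi\in(0,1)$ then keeps $q>0$ along the trajectory, and Poincar\'e--Bendixson combined with the instability of $(0,0)$ rules out $\phi_L(+\infty)=0$. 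Tracing this branch backwards under $q>0$, $\phi$ decreases monotonically and sweeps the whole interval $(0,1)$---either reaching $(0,0)$ at $x=-\infty$ if $\beta_L\ge 2$, or crossing $\phi=0$ at some finite $x$ if $\beta_L<2$---so each $\alpha$ is realised exactly once.

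For the upper branch the key dichotomy occurs at $(0,0)$. When $0<\beta_U<2$, $(0,0)$ is an unstable spiral; any trajectory approaching it as $x\to-\infty$ oscillates across $\phi=0$, contradicting $\phi_U>0$, and the only alternative limit $(1,0)$ forces $q<0$ nearby, incompatible with the positive slope required by the matching. This proves (i). When $\beta_U\ge 2$, $(0,0)$ is a (possibly degenerate) node with eigenvalues $\lambda_\pm^U=\tfrac12(\beta_U\pm\sqrt{\beta_U^2-4})$, and the distinguished strong-unstable trajectory $W^{uu}(0,0)$ tangent to $(1,\lambda_+^U)$ defines a positive continuous function $q_s(\alpha)$, namely its slope at $\phi=\alpha$. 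A shooting/comparison argument then identifies the admissible slopes as $\mathcal{A}(\alpha)=(0,q_s(\alpha)]$: positive slopes larger than $q_s(\alpha)$ force the backwards trajectory to exit $\{\phi>0\}$ in finite time, while smaller ones lie in $W^u(0,0)\cap\{\phi>0\}$ and thus converge to $(0,0)$. Linearisation at $(0,0)$ gives the slow asymptotics \eqref{slow-u} for interior $q$ and the fast asymptotics \eqref{fast-u} precisely when $q=q_s(\alpha)$.

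The matching condition, simplified via \eqref{con1} to $q_U=(\beta_U/\beta_L)q_L(\alpha)=:h(\alpha)$, turns existence of a solution (when $\beta_U\ge 2$) into the single inequality $h(\alpha)\le q_s(\alpha)$, so everything reduces to comparing $h$ and $q_s$ on $(0,1)$. In case (ii), $q_L$ is the slope of a monotone KPP wave; the endpoint estimates $h(\alpha)/\alpha\to(\beta_U/\beta_L)\lambda_-^L$ and $q_s(\alpha)/\alpha\to\lambda_+^U$ as $\alpha\to 0^+$ (with $(\beta_U/\beta_L)\lambda_-^L\le\lambda_+^U$), together with $h(\alpha)\to 0<q_s(1^-)$ as $\alpha\to 1^-$ and a sweeping argument on shifted waves, give $h<q_s$ strictly on $(0,1)$ and yield (ii) with slow decay. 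In case (iii), the absence of a wave on $\mathbb{R}_L$ forces $q_L(\alpha)\to q^*>0$ as $\alpha\to 0^+$ (the positive slope at which $W^s(1,0)$ crosses $\phi=0$), so $h(\alpha)/\alpha\to\infty$ while $q_s(\alpha)/\alpha$ stays bounded, giving $h>q_s$ near $0$; combined with $h<q_s$ near $1$ and continuity, a strict-monotonicity argument for $q_s-h$ produces a unique threshold $\alpha_0\in(0,1)$, hence (iii). Part (iv) then drops out: the signs of $\phi_L',\phi_U'$ come from the same no-interior-maximum arguments, and the decay asymptotics follow from whether the matched slope lies in the interior of $\mathcal{A}(\alpha)$ (slow decay, covering (ii) and (iii) with $\alpha>\alpha_0$) or on its boundary (fast decay, exactly (iii) with $\alpha=\alpha_0$). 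The main obstacle I anticipate is the strict monotonicity of $q_s-h$ in case (iii): neither function is explicit and the competition occurs at intermediate $\phi$, so it cannot be reduced to linearisation at the equilibria alone, and will likely require a global sliding or super-solution comparison adapted to the shooting parameter $\alpha$.
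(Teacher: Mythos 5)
Your overall architecture --- decoupling \eqref{ss} into two half-line shooting problems, recording the admissible lower-branch slope $q_L(\alpha)$ and the admissible set $(0,q_s(\alpha)]$ of upper-branch slopes, and reducing everything to the scalar comparison $h(\alpha):=(\beta_U/\beta_L)q_L(\alpha)$ versus $q_s(\alpha)$ --- is exactly the paper's strategy (Section 5), and parts (i), the classification of admissible slopes, and the slow/fast dichotomy in (iv) are argued the same way. But there is a genuine gap at the crux of part (iii): you explicitly leave unresolved the fact that $q_s-h$ changes sign exactly once, and without it you get existence of \emph{some} threshold behaviour but not the clean statement ``no solution on $(0,\alpha_0)$, unique solution on $[\alpha_0,1)$''. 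The paper closes this with a one-line slope comparison after rescaling $\Phi(x)=\phi(\beta^{-1}x)$, which turns both trajectory equations into $-\Phi''+\Phi'=\mu(\Phi-\Phi^2)$ with $\mu=\beta^{-2}$ and the Kirchhoff condition into equality of slopes; the two curves to be compared then both satisfy $\Psi'=1-\mu f(\phi)/\Psi$ with $\mu_1=\beta_L^{-2}>\beta_U^{-2}=\mu_2$, so at any intersection the curve with larger $\mu$ has strictly smaller slope, hence there is at most one crossing. The same computation works in your unrescaled variables: $\tilde q:=(\beta_U/\beta_L)q_L$ satisfies $\tilde q'=\beta_U-(\beta_U/\beta_L)^2 f(\phi)/\tilde q$ while $q_s'=\beta_U-f(\phi)/q_s$, so at any point where $\tilde q=q_s$ one has $\tilde q'-q_s'=-\bigl[(\beta_U/\beta_L)^2-1\bigr]f(\phi)/q_s<0$ because $\beta_U\geq 2>\beta_L$. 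This is precisely the ``global sliding'' you anticipated needing, and it is elementary; no super-solution argument is required.

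A secondary weakness is your case (ii): the endpoint asymptotics $h(\alpha)/\alpha\to(\beta_U/\beta_L)\lambda_-^L$ and $q_s(\alpha)/\alpha\to\lambda_+^U$ do not control intermediate $\alpha$, and they do not even separate the two functions near $\alpha=0$ when $\beta_L=\beta_U=2$ (the two limits coincide). The paper's fix is global rather than asymptotic: in the rescaled plane the heteroclinic $H$ lies strictly below the line $\psi=\tfrac12\phi$ on $0<\phi<1$ while $\Gamma^*$ lies strictly above it (each line $\ell^{\pm}$ is crossed by the flow in one direction only), which gives $h(\alpha)<\tfrac{\beta_U}{2}\alpha<q_s(\alpha)$ for \emph{all} $\alpha\in(0,1)$ at once and also yields the strictness needed to conclude the slow decay \eqref{slow-u} rather than \eqref{fast-u} in case (ii). With these two repairs your proof matches the paper's.
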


The long-time behavior of \eqref{two branches} is determined in the following theorem (for all possible cases of $\beta_L$, $\beta_U>0$).

\begin{Thm}\label{main:two}
Assume that  $w_0\in C_{comp}(\mathbb{R})$ is nonnegative and $w_0\not\equiv 0$.
Let $(w_L,w_U)$ be the solution of \eqref{two branches}. Then the following assertions hold.
\begin{enumerate}
\item[\rm(i)] If $0<\beta_U<2$, then $(w_L(t,\cdot),w_U(t,\cdot))\rightarrow(1,1)$ locally uniformly as $t\rightarrow \infty$.

\item[\rm(ii)] If $\beta_U, \beta_L\geq 2$, then $(w_L(t,\cdot),w_U(t,\cdot))\rightarrow(0,0)$
locally uniformly as $t\rightarrow \infty$. Moreover, $\|w_L(t,\cdot)\|_{L^\infty(\mathbb{R}_L)}\rightarrow 1$ and $\|w_U(t,\cdot)\|_{L^\infty(\mathbb{R}_U)}\rightarrow 0$ as $t\rightarrow \infty$.

\item[\rm(iii)] If $\beta_U\geq  2>\beta_L>0$, then $(w_L(t,\cdot),w_U(t,\cdot))\rightarrow(\phi_L(\cdot;\alpha_0),\phi_U(\cdot;\alpha_0))$
locally uniformly as $t\rightarrow \infty$, where $(\phi_L(\cdot;\alpha_0),\phi_U(\cdot;\alpha_0))$
is the unique solution of \eqref{ss}
 with $\alpha=\alpha_0$, determined in Theorem \ref{prop:2-bra}.

\end{enumerate}
\end{Thm}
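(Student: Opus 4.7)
The plan is to combine the comparison principle for \eqref{two branches} (established in Section 2) with the classification in Theorem \ref{prop:2-bra}, via an order-preserving semiflow analysis. Two universal preliminaries apply throughout: the constant $(1,1)$ is a stationary supersolution, so by comparison and $f(u)<0$ for $u>1$, $\limsup_{t\to\infty}(w_L, w_U) \leq (1,1)$; and by the strong maximum principle, $(w_L, w_U) > 0$ on the whole graph for every $t > 0$.

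For case (i), $\beta_U < 2$: I would construct a strictly positive subsolution from the principal eigenvalue $\lambda_1 = \beta_U^2/4 + (\pi/L)^2$ of $-\psi'' + \beta_U\psi'$ on $(-L,0)$ with Dirichlet data, which drops below $1$ for $L$ large. The positive eigenfunction $\psi$ scaled by small $\varepsilon > 0$, extended by zero across the junction into $\mathbb{R}_L$ and to the left of $-L$, yields a generalized subsolution of \eqref{two branches}: the PDE inequality holds on $(-L, 0)$ by the eigenvalue bound and smallness of $\varepsilon$, the derivative jump at $x = -L$ has the favorable sign, and the Kirchhoff-type sub-inequality $a_U\underline{w}_U'(0) \leq a_L\underline{w}_L'(0)$ at the junction holds since $\underline{w}_U'(0^-) < 0 = \underline{w}_L'(0^+)$. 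After a waiting time $t_0$ for the strong MP to push $w(t_0, \cdot)$ above $\varepsilon\psi$, comparison gives $w(t+t_0, \cdot) \geq v(t, \cdot)$ with $v$ the solution launched from $\varepsilon\psi$; since $\varepsilon\psi$ is a subsolution, $v(t,\cdot)$ is nondecreasing in $t$ and converges (by parabolic compactness) to a nontrivial steady state of \eqref{ss-0}, which by Theorem \ref{prop:2-bra}(i) can only be $(1,1)$.

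For case (ii), $\beta_L, \beta_U \geq 2$: local vanishing follows by dominating each branch by a Fisher-KPP Cauchy solution $U$ on $\mathbb{R}$ of the corresponding drift, extending the initial data with compact support and using the classical result \eqref{shift-v} (which gives $U \to 0$ locally uniformly since the drift is $\geq c_* = 2$). A graph-level supersolution built from $U$, with compatible matching at the junction, transfers this decay to $(w_L, w_U) \to (0, 0)$. The spreading $\|w_L\|_\infty \to 1$ comes from a standard compactly-supported KPP subsolution riding the downstream spreading speed; $\|w_U\|_\infty \to 0$ follows because no KPP wave can advance against drift $\geq c_*$, so the upstream Cauchy supersolution is uniformly small in $t$.

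Case (iii), $\beta_U \geq 2 > \beta_L$, is the most delicate and is the expected main obstacle. The subsolution construction of case (i), relocated to the lower branch (using $\beta_L < 2$), produces a nondecreasing orbit $v(t, \cdot)$ converging to a nontrivial steady state. For $\varepsilon$ small, $\varepsilon\psi \leq (\phi_L(\cdot;\alpha_0), \phi_U(\cdot;\alpha_0))$ pointwise, so comparison yields $v(t, 0) \leq \alpha_0$ for all $t$; combined with Theorem \ref{prop:2-bra}(iii) which forces the limit steady state to have junction value in $[\alpha_0, 1)$, the limit must be $(\phi_L(\cdot;\alpha_0), \phi_U(\cdot;\alpha_0))$, giving $\liminf(w_L, w_U) \geq (\phi_L(\cdot;\alpha_0), \phi_U(\cdot;\alpha_0))$. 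For the matching upper bound I would exploit the sharp upstream decay in Theorem \ref{prop:2-bra}(iv): the piecewise function $\bar u := \min(1, Ce^{\mu_+ x})$ on $\mathbb{R}_U$ and $\bar u \equiv 1$ on $\mathbb{R}_L$, with $\mu_+ = \tfrac12(\beta_U + \sqrt{\beta_U^2-4})$ solving $\mu^2 - \beta_U\mu + 1 = 0$, is a stationary supersolution of \eqref{two branches} (PDE equality on the exponential branch, favorable derivative jump at the matching point $Ce^{\mu_+ x} = 1$, and trivial Kirchhoff at $x=0$), and for $C$ large dominates $w_0$ pointwise since $w_0 \leq 1$ has compact support. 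Comparison then yields $w_U(t,x) \leq C e^{\mu_+ x}$ uniformly in $t$ for $x$ sufficiently negative, so any $\omega$-limit steady state must decay at $-\infty$ at least as fast as $e^{\mu_+ x}$; by \eqref{slow-u}-\eqref{fast-u}, this fast decay is only compatible with $\alpha = \alpha_0$, giving $\limsup(w_L, w_U) \leq (\phi_L(\cdot;\alpha_0), \phi_U(\cdot;\alpha_0))$ and completing the sandwich. A recurring technical point throughout is verifying the correct one-sided Kirchhoff inequalities at the junction for piecewise-defined sub- and super-solutions.
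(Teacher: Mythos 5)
Your cases (i) and (iii) are essentially the paper's arguments. In (i) you replace the paper's small positive solution of the logistic Dirichlet problem on $(-L,0)$ by $\varepsilon$ times the principal eigenfunction; this is equivalent, and the monotone orbit plus Theorem \ref{prop:2-bra}(i) finishes it the same way. In (iii) your two barriers are the paper's: a compactly supported subsolution in the slow lower branch driving a nondecreasing orbit pinned at $\alpha=\alpha_0$ from below, and the stationary supersolution $Ce^{\mu_+x}$ on $\mathbb{R}_U$ whose fast decay, via \eqref{slow-u}--\eqref{fast-u} and the nonexistence of solutions of \eqref{ss} for $\alpha<\alpha_0$, forces $\alpha=\alpha_0$ from above. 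Two small repairs are needed there: you implicitly assume $w_0\le 1$, which is not hypothesized (use $\min(M,Ce^{\mu_+x})$ with $M>\|w_0\|_\infty$, or first run the ODE supersolution $1+\|w_0\|_\infty e^{-t}$); and the inference from ``every $\omega$-limit \emph{steady state} has $\alpha=\alpha_0$'' to ``$\limsup\le\phi(\cdot;\alpha_0)$'' is unjustified for a possibly non-monotone orbit --- you should instead launch the semiflow from the stationary supersolution itself, obtain a nonincreasing orbit dominating $(w_L,w_U)$ whose limit is a steady state inheriting the bound $\le Ce^{\mu_+x}$, exactly as the paper does.

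The genuine gap is case (ii). Everything there rests on ``a graph-level supersolution built from $U$, with compatible matching at the junction,'' where $U$ is a whole-line Fisher--KPP Cauchy solution for each branch's drift --- but that matching is precisely the difficulty, and you give no construction. Two independent Cauchy solutions $U_L$ (drift $\beta_L$) and $U_U$ (drift $\beta_U$) fail the continuity condition $\bar w_L(t,0)=\bar w_U(t,0)$, and no obvious splice (min, max, truncation) satisfies both continuity and the Kirchhoff super-inequality $a_U\partial_x\bar w_U(t,0)\ge a_L\partial_x\bar w_L(t,0)$ while remaining a supersolution of the correct equation on each branch; comparing $w_U$ against $U_U$ alone on the half-line would require the a priori unknown bound $w_U(t,0)\le U_U(t,0)$. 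The paper resolves this by taking $\bar w_i=\phi_i(\cdot;\alpha)+Me^{-\lambda t}e^{\beta_i x/2}$, where $(\phi_L(\cdot;\alpha),\phi_U(\cdot;\alpha))$ is the stationary solution of \eqref{ss} that exists for \emph{every} $\alpha\in(0,1)$ precisely when $\beta_L,\beta_U\ge2$ (Theorem \ref{prop:2-bra}(ii)) and already satisfies the junction conditions exactly; the correction terms contribute $a_LM\tfrac{\beta_L}{2}e^{-\lambda t}$ and $a_UM\tfrac{\beta_U}{2}e^{-\lambda t}$, which are equal by the flow conservation \eqref{con1}, so the Kirchhoff condition holds with equality. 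Moreover, since $\|w_L\|_{L^\infty(\mathbb{R}_L)}\to1$, no supersolution tending to $0$ can dominate on all of $\mathbb{R}_L$: the comparison must be run on the moving domain $[l,\tfrac{2\lambda}{\beta_L}t]$ with separate exponential barriers controlling $w_U$ for $x\le l$ and $w_L$ at the moving right endpoint, after which $\alpha\to0$ yields local vanishing. Without a substitute for this construction, your case (ii) does not go through.
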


Here, and in what follows, we say $(w_L, w_U)$ converges locally uniformly if $w_L$ converges locally uniformly in $\overline{\mathbb{R}}_L=[0,+\infty)$, and
$w_U$ converges locally uniformly in $\overline{\mathbb{R}}_U=(-\infty,0]$. The same convention will be used for similar three component functions below.

The conclusion in part (ii) of Theorem \ref{main:two} is rather natural and easy to understand: water flow speeds in both branches above the threshold level will result in the population being washed down the stream, so locally the population converges to 0. We should note, however, that the population in the upper branch satisfies $\|w_U(t,\cdot)\|_{L^\infty(\mathbb R_U)}\to 0$, while the population in the lower branch only converges to zero locally, as the $L^\infty$-norm of $w_L(t,\cdot)$ converges to 1, indicating that the population is washed down stream instead of being wiped out from the river network.

The conclusions in parts (i) and (ii) clearly indicate the importance of the upper river water flow speed over that of the lower river. Let us also note that in part (iii), the limiting stationary solution is increasing in $x$ in both branches, and the limit at $x\to-\infty$ is 0, while the limit as $x\to+\infty$ is 1. This steady-state is selected because it decays to 0  at $x=-\infty$ with the fastest rate compared to all the other positive steady-states in  this case; see part (iv) of Theorem \ref{prop:2-bra}.

\subsection{Two upper  branches and one lower  branch}
We next consider the case that the species starts in a local river system with two upper river branches  and one lower river branch. We use $\mathbb{R}_L:=(0,+\infty)$ to represent the lower branch,
and $\mathbb{R}_{U_1}:=(-\infty,0)$, $\mathbb{R}_{U_2}:=(-\infty,0)$ to stand for the two upper branches. Let $w_L,\,w_{U_1},\,w_{U_2}$
denote the density of the species in $\mathbb{R}_L,\, \mathbb{R}_{U_1}$ and $\mathbb{R}_{U_2}$, respectively.
Then, we are led to the following system:
\begin{equation}\label{main equation}
\begin{cases}
\partial_tw_{U_1}-\partial_{xx}w_{U_1}+\beta_{U_1}\partial_xw_{U_1}=w_{U_1}-w_{U_1}^2, &\ x\in\mathbb{R}_{U_1},t>0,\\
\partial_tw_{U_2}-\partial_{xx}w_{U_2}+\beta_{U_2}\partial_xw_{U_2}=w_{U_2}-w_{U_2}^2, &\ x\in\mathbb{R}_{U_2},t>0,\\
\partial_tw_L-\partial_{xx}w_L+\beta_L\partial_xw_L=w_L-w_L^2, &\ x\in\mathbb{R}_L,t>0,\\
w_L(t,0)=w_{U_1}(t,0)=w_{U_2}(t,0),&\ t>0,\\
a_L\partial_xw_L(t,0)=a_{U_1}\partial_xw_{U_1}(t,0)+a_{U_2}\partial_xw_{U_2}(t,0),&\ t>0,
\end{cases}
\end{equation}
where the parameters $\beta_L,\beta_{U_1},\beta_{U_2},a_L,a_{U_1},a_{U_2}$ are positive constants and
have the same biological interpretation as before. We also have the conservation
of the flow at the junction point $x=0$:
 \begin{equation}\label{con2}
 a_L\beta_L=a_{U_1}\beta_{U_1}+a_{U_2}\beta_{U_2}.
 \end{equation}
Regarding the initial conditions, we assume that
\begin{equation}\label{ini data}
\begin{cases}
w_i(0,x)=w_{i,0}(x) \mbox{ is nonnegative and continuous in } \mathbb{\bar R}_i,\ \ i=L, U_1, U_2,\\
w_{L,0}(0)=w_{U_1,0}(0)=w_{U_2,0}(0),\\
w_{i,0}(x)\equiv 0 \mbox{ for all large negative $x$ when $i\in\{U_1, U_2\}$},\\
 w_{L,0}(x)\equiv 0 \mbox{ for all large positive $x$}.
\end{cases}
\end{equation}

For the stationary solutions $(\phi_{U_1}, \phi_{U_2}, \phi_L)$ of \eqref{main equation}, again only the ones satisfying
$0\leq \phi_{U_1}\leq 1, 0\leq \phi_{U_2}\leq 1, 0\leq \phi_L\leq 1$ are relevant. Moreover, either
$(\phi_{U_1}, \phi_{U_2}, \phi_L)\equiv (0,0,0)$, or $(\phi_{U_1}, \phi_{U_2}, \phi_L)\equiv (1,1,1)$, or
it satisfies
\begin{equation}\label{ss-3-up}
\left\{
\begin{array}{ll}

- \phi_{U_1}'' +\beta_{U_1}  \phi_{U_1}'= \phi_{U_1} - \phi^2_{U_1},\;  0<\phi_{U_1}<1, & x\in \mathbb{R}_{U_1} := (-\infty,0),\\
- \phi_{U_2}'' +\beta_{U_2} \phi_{U_2}' = \phi_{U_2} - \phi^2_{U_2}, \;  0<\phi_{U_2}<1, & x\in \mathbb{R}_{U_2} := (-\infty,0),\\
- \phi_L'' +\beta_L  \phi_L' = \phi_L - \phi^2_L,\; \;\;\;\;\;\; \; 0<\phi_L<1, & x\in \mathbb{R}_L := (0,+\infty),\\
 \phi_{U_1}(0) = \phi_{U_2}(0) =\phi_L (0) = \alpha \in (0,1), & \\
a_{U_1} \phi_{U_1}'(0) + a_{U_2}  \phi_{U_2}' (0) = a_L \phi_L'(0). &
\end{array}
\right.
\end{equation}

A complete classification of the solutions to \eqref{ss-3-up} is given in the following theorem.

\begin{Thm}\label{uul}
\begin{itemize}
\item[(I)] If $\beta_{U_1}, \beta_{U_2} < 2$, then \eqref{ss-3-up} has no solution.
\item[(II)]
 If $ \beta_{U_1}, \beta_{U_2}, \beta_L \geq  2$, then the following hold:
\begin{itemize}
\item[\bf a.] For every $\alpha\in (0,1)$, \eqref{ss-3-up} has a continuum of solutions satisfying
 \begin{equation}\label{00}
 \phi_{U_1}'>0,\;\phi_{U_2}'>0,\; \phi_L'>0,\; \phi_{U_1}(-\infty)=\phi_{U_2}(-\infty)=0,\; \phi_L(+\infty)=1.
 \end{equation}
 \item[\bf b.] For $i=1,2$ and $j=3-i$, there exists  $\hat\alpha_{i} \in (0,1)$
 such that  for each $\alpha\in  [\hat \alpha_{i}, 1)$,  \eqref{ss-3-up} has a
unique solution satisfying
\begin{equation}\label{01}
  \phi_{U_i}'<0, \phi_{U_j}'>0,\; \phi_L'>0,\; \phi_{U_i}(-\infty)=1,\; \phi_{U_j}(-\infty)=0,\; \phi_L(+\infty)=1,
 \end{equation}
and has no such solution for $\alpha
\in (0, \hat\alpha_i)$.
\item[\bf c.] Any solution of \eqref{ss-3-up} with $\alpha\in (0,1)$ satisfies either \eqref{00} or \eqref{01}. Moreover, for any $\alpha\in (0,1)$,  there exist  $c_i=c_i(\alpha)>0$ for $i=1,2$, and a solution of  \eqref{ss-3-up}
  such that, as $x\to-\infty$, for both $i=1,2$,
   \begin{equation}\label{slow}
  \phi_{U_i}(x)=\left\{\begin{array}{ll}(c_i+o(1))e^{\frac{1}{2}(\beta_{U_i}-\sqrt{\beta_{U_i}^2-4}\,)x}& \mbox{ if  $\beta_{U_i}>2$,}\smallskip \\
  (c_i+o(1))|x|e^{x}& \mbox{ if $\beta_{U_i}=2$.}
  \end{array}\right.
  \end{equation}
\end{itemize}
\item[(III)]
 If $\beta_{U_1}, \beta_{U_2}\geq  2> \beta_L $, then the following hold:
\begin{itemize}
\item[\bf a.]
There exists $\alpha^* \in (0,1)$
 such that \eqref{ss-3-up} has a
continuum of solutions satisfying \eqref{00} for each $\alpha\in (\alpha^{*}, 1)$, has a unique solution  for $\alpha=\alpha^*$, and has no  solution for $\alpha
\in (0, \alpha^{*})$.
\item[\bf b.] For $i=1,2$ and $j=3-i$, there exist  $\hat\alpha^*_{i} \in (0,1)$ with  $\hat\alpha^*_{i}>\alpha^*$,
 such that  for each $\alpha\in  [\hat \alpha^*_{i}, 1)$,  \eqref{ss-3-up} has a
unique solution satisfying \eqref{01}, and has no such solution for $\alpha\in (0, \hat\alpha^*_{i})$.
\item[\bf c.] Any solution of \eqref{ss-3-up} with $\alpha\in (0,1)$ satisfies either \eqref{00} or \eqref{01}.

 \item[\bf d.] If $\alpha\in (\alpha^{*}, 1)$, then  for any solution of \eqref{ss-3-up} satisfying \eqref{00}, there exists $i\in\{1, 2\}$ and $c_i=c_i(\alpha)>0$ such that, as $x\to-\infty$, \eqref{slow} holds.
  \item[\bf e.]  If $\alpha=\alpha^*$, then the unique solution of \eqref{ss-3-up} has the following asymptotic expansion  as $x\to-\infty$,
 \begin{equation}\label{fast} \phi_{U_i}(x)=(c_i+o(1))e^{\frac{1}{2}(\beta_{U_i}+\sqrt{\beta_{U_i}^2-4}\,)x} \mbox{ for some } c_i=c_i(\alpha)>0,\; i=1,2.
 \end{equation}
  \end{itemize}
  \item[(IV)] If $\max\{\beta_{U_1},\beta_{U_2}\}\geq 2> \min\{\beta_{U_1},\beta_{U_2}\}$, then there exists $\alpha^{**}\in (0, 1)$
    such that
\eqref{ss-3-up} has a unique solution
 for each $\alpha\in  [\alpha^{**}, 1)$,   and has
no solution for $\alpha\in (0, \alpha^{**})$. Moreover, when $\alpha\in [\alpha^{**}, 1)$
and $\beta_{U_j}=\max\{\beta_{U_1}, \beta_{U_2}\}$, the solution $(\phi_{U_1}, \phi_{U_2},\phi_L)$ satisfies \eqref{01} with $i=3-j$,
and moreover, as $x\to-\infty$, \eqref{slow} holds for $\phi_{U_j}$ when $\alpha\in(\alpha^{**}, 1)$, and \eqref{fast} holds for $\phi_{U_j}$ when
 $\alpha=\alpha^{**}$.
 \end{itemize}
\end{Thm}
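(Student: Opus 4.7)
The plan is a phase plane analysis of each branch ODE $-\phi'' + \beta\phi' = \phi - \phi^2$ combined with Kirchhoff matching at $x=0$, parametrised by $\alpha = \phi(0)\in(0,1)$. The equilibria of this ODE are $(0,0)$ (an unstable node for $\beta \geq 2$, unstable spiral for $\beta < 2$) and $(1,0)$ (always a saddle with eigenvalues $\frac{1}{2}(\beta\pm\sqrt{\beta^2+4})$ of opposite sign). Any bounded solution in $\{0<\phi<1\}$ has $\alpha$- and $\omega$-limits in $\{(0,0),(1,0)\}$, and since $\phi'' = -\phi(1-\phi)<0$ at any interior critical point no local minimum can occur in $(0,1)$, forcing monotonicity. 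On the lower branch $[0,\infty)$ the only admissible $\omega$-limit is $(1,0)$, so $\phi_L$ increases to $1$ along the one-dimensional (forward-time) stable manifold of $(1,0)$, yielding a unique solution for each $\alpha$ with slope $s_L(\alpha) := \phi_L'(0) > 0$. On an upper branch $(-\infty,0]$ two configurations are possible: (a) a \emph{decreasing} solution tracing the unstable manifold of $(1,0)$ into $\phi<1$, with $\phi(-\infty)=1$, which is the KPP traveling wave when $\beta\geq 2$ and a finite-time exit trajectory when $\beta<2$, yielding a unique slope $s^-(\alpha)<0$ for every $\alpha$; (b) when $\beta\geq 2$, \emph{increasing} solutions on the two-dimensional unstable manifold of the node $(0,0)$, with $\phi(-\infty)=0$, forming a family whose admissible slopes $s = \phi'(0)$ at $x=0$ fill a bounded interval $I(\alpha)$. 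Inside $I(\alpha)$ the generic trajectory has slow asymptotic $\phi \sim c\,e^{\lambda_- x}$ (or $c|x|e^x$ at $\beta=2$) with $\lambda_-=\frac{1}{2}(\beta-\sqrt{\beta^2-4})$, while one endpoint $s^\flat(\alpha)\in\overline{I(\alpha)}$ is realised by the special fast trajectory tangent to the eigenvector for $\lambda_+=\frac{1}{2}(\beta+\sqrt{\beta^2-4})$.

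Given this dictionary, every solution of \eqref{ss-3-up} is specified by $\alpha \in (0,1)$ together with slope choices $s_{U_i}$ from the admissible sets of the two upper branches, subject to $a_{U_1}s_{U_1}+a_{U_2}s_{U_2}=a_L s_L(\alpha)$. Since $s_L(\alpha)>0$ while each $s^-_{U_i}(\alpha)<0$, the configuration in which both upper branches are decreasing is always infeasible. Case (I) follows immediately: with $\beta_{U_1},\beta_{U_2}<2$ no increasing option is available, so no solution exists. In Case (II), the both-increasing configuration requires the affine Kirchhoff line to intersect the rectangle $I_{U_1}(\alpha)\times I_{U_2}(\alpha)$; using continuity, monotonicity and limits of $s_L,s^\flat_{U_i}$ and the endpoints of $I_{U_i}$ as $\alpha\to 0^+,1^-$, one verifies that the intersection is a non-degenerate segment for every $\alpha\in(0,1)$, giving the continuum of (II.a). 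Choosing $(s_{U_1},s_{U_2})$ in the interior of this segment yields the slow-decay realisation of (II.c), while the classification in (II.c) follows because each $\phi_{U_i}$ is either of the decreasing type (so $(\phi_{U_1},\phi_{U_2},\phi_L)$ satisfies \eqref{01}) or of the increasing type (so it satisfies \eqref{00}). The mixed configurations of (II.b) fix $s_{U_i}=s^-_{U_i}(\alpha)$ and solve Kirchhoff for $s_{U_j}$; the threshold $\hat\alpha_i$ is the least $\alpha$ for which the forced value lies in $I_{U_j}(\alpha)$.

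Case (III) is parallel to (II), but the lower-branch slope $s_L(\alpha)$ now comes from the subcritical regime $\beta_L<2$, in which the stable manifold of $(1,0)$ enters more steeply, so $s_L(\alpha)$ exceeds the fast-corner sum $(a_{U_1}s^\flat_{U_1}(\alpha)+a_{U_2}s^\flat_{U_2}(\alpha))/a_L$ when $\alpha$ is small. Setting $g(\alpha):=a_L s_L(\alpha)-a_{U_1}s^\flat_{U_1}(\alpha)-a_{U_2}s^\flat_{U_2}(\alpha)$, continuity and monotonicity give a unique zero $\alpha^*\in(0,1)$: the Kirchhoff line meets the interior of the rectangle (continuum with slow asymptotics \eqref{slow}) for $\alpha>\alpha^*$, passes through the corner $(s^\flat_{U_1}(\alpha^*),s^\flat_{U_2}(\alpha^*))$ (unique solution, fast asymptotics \eqref{fast}) at $\alpha=\alpha^*$, and misses the rectangle entirely for $\alpha<\alpha^*$. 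The mixed configurations (III.b) proceed as in (II.b), with thresholds $\hat\alpha^*_i>\alpha^*$ because the negative flux from the decreasing branch further tightens the matching. Finally, for Case (IV), by the symmetry $U_1\leftrightarrow U_2$ assume $\beta_{U_2}<2\leq\beta_{U_1}$: branch $U_2$ must be decreasing with $s_{U_2}=s^-_{U_2}(\alpha)$, and Kirchhoff forces $s_{U_1}=(a_L s_L(\alpha)-a_{U_2}s^-_{U_2}(\alpha))/a_{U_1}$. The threshold $\alpha^{**}$ is the smallest $\alpha$ for which this value enters $I_{U_1}(\alpha)$; interior membership gives \eqref{slow}, and equality with $s^\flat_{U_1}(\alpha^{**})$ at $\alpha=\alpha^{**}$ gives \eqref{fast}.

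The main technical obstacle is establishing the continuity, monotonicity, and limiting behaviour (as $\alpha\to 0^+$ and $\alpha\to 1^-$) of the slope functions $s_L, s^-_{U_i}, s^\flat_{U_i}$ and the endpoints of $I_{U_i}(\alpha)$, in quantitative detail sufficient to locate each threshold and to prove that $g$ in Case (III) changes sign exactly once. The dichotomy between Cases (II) and (III) hinges on the precise dependence of $s_L(\alpha)$ on $\beta_L$, via the eigenvalue $\frac{1}{2}(\beta_L-\sqrt{\beta_L^2+4})$ of the stable direction at $(1,0)$, which becomes steeper as $\beta_L$ decreases past $2$. The asymptotic expansions \eqref{slow} and \eqref{fast} themselves follow from the local linearisation at $(0,0)$ combined with standard invariant-manifold theory, with the $\beta=2$ logarithmic factor arising from the double characteristic root.
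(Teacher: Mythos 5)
Your phase-plane framework is essentially the one used in the paper's Section 5: the lower branch is pinned to the unique trajectory of \eqref{phi-psi} staying in $D$ for $x\geq 0$ ($H_3$ or $\Gamma_3^+$), each upper branch is either the decreasing trajectory $\Gamma^-$ emanating from $(1,0)$ or one of the increasing trajectories approaching $(0,0)$ (bounded above by $\Gamma^*$), and solutions are classified by which slopes satisfy the Kirchhoff relation at $x=0$. Cases (I), (II.a), (II.c), and the existence parts of the remaining cases go through exactly as you describe, and the asymptotics \eqref{slow}/\eqref{fast} do come from the tangency of the relevant trajectory to $\ell^-$ versus $\ell^+$ at the origin.

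The genuine gap is your assertion, in Case (III), that ``continuity and monotonicity give a unique zero $\alpha^*$'' of $g(\alpha)=a_Ls_L(\alpha)-a_{U_1}s^\flat_{U_1}(\alpha)-a_{U_2}s^\flat_{U_2}(\alpha)$, and the analogous single-crossing claims implicit in your definitions of $\hat\alpha_i$, $\hat\alpha_i^*$ and $\alpha^{**}$ (``the least $\alpha$ for which the forced value lies in $I_{U_j}(\alpha)$'' presumes the admissible set is an interval). Monotonicity of $g$ is not a technicality: writing $g$ in terms of the trajectory equations, one must compare $\xi_1\Psi_1^*+\xi_2\Psi_2^*$ with $\Psi_3$, and the slope identity $\Psi_i'=1-\mu_i f(\phi)/\Psi_i$ no longer yields a sign at a crossing point because the left-hand side is a convex combination of two trajectories with \emph{different} $\mu_i$ (the two-branch argument in Theorem \ref{prop:2-bra}(iii) used a direct comparison $\mu_1>\mu_2$ at a single crossing, which has no analogue here). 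This is precisely why the paper's phase-plane analysis only delivers a first zero $\alpha_1^*$ and a last zero $\alpha_2^*$ (Theorems \ref{II}--\ref{IV}), and the identity $\alpha_1^*=\alpha_2^*$ is then proved by an entirely different mechanism: the sliding/comparison argument of Lemma \ref{*=*}, which assumes two distinct threshold solutions, uses the exact decay rates \eqref{fast} at $-\infty$ to build the minimal multiple $M_*(\phi_1,\phi_2,\phi_3)\geq(\psi_1,\psi_2,\psi_3)$ as a supersolution, and derives a contradiction to the minimality of $M_*$ via an ODE fundamental-system estimate near $-\infty$. Without supplying either a proof of the single-crossing property or a substitute for Lemma \ref{*=*}, your argument only establishes the weaker statements of Theorems \ref{II}--\ref{IV}, not Theorem \ref{uul} itself.
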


Although the set of stationary solutions of \eqref{main equation} is rich and rather complex as revealed in Theorem \ref{uul} above, the
 long time dynamics of \eqref{main equation} turns out to be relatively simple, which is given in the following theorem. (We note that in both Theorems 1.3 and 1.4, all the possible cases of $\beta_{U_1}, \beta_{U_2},\beta_L>0$ are included.) We remark that the behaviour at $x\to-\infty$ of the stationary solutions plays a crucial role in determining the long-time dynamics of \eqref{main equation}.

\begin{Thm}\label{main} Assume that the nonnegative initial data $(w_{U_1,0},w_{U_2,0}, w_{L,0})$
satisfy \eqref{ini data} and $w_{i,0}\not\equiv 0$ for some $i\in\{L,U_1,U_2\}$.
Let $(w_{U_1},w_{U_2}, w_L)$ be the solution of \eqref{main equation}. Then the following assertions hold true:

\begin{enumerate}

\item[\rm(i)] If $\beta_{U_1},\beta_{U_2}<2$, then $(w_{U_1}(t,\cdot),w_{U_2}(t,\cdot), w_L(t,\cdot))\rightarrow(1,1,1)$
locally uniformly as $t\rightarrow \infty$.

\item[\rm(ii)] If $\beta_L,\beta_{U_1},\beta_{U_2}\geq 2$, then $(w_{U_1}(t,\cdot),w_{U_2}(t,\cdot), w_L(t,\cdot))\rightarrow(0,0,0)$
locally uniformly as $t\rightarrow \infty$; moreover, $\|w_L(t,\cdot)\|_{L^\infty(\mathbb{R}_L)}\rightarrow 1$, $\|w_{U_1}(t,\cdot)\|_{L^\infty(\mathbb{R}_{U_1})}\rightarrow 0$ and  $\|w_{U_2}(t,\cdot)\|_{L^\infty(\mathbb{R}_{U_2})}\rightarrow 0$ as $t\rightarrow \infty$.

\item[\rm(iii)] If $\beta_{U_1},\beta_{U_2}\geq 2>\beta_L$, then
\[
(w_{U_1}(t,\cdot),w_{U_2}(t,\cdot), w_L(t,\cdot))\rightarrow(\phi_{U_1}(\cdot;\alpha^*),\phi_{U_2}(\cdot;\alpha^*), \phi_L(\cdot;\alpha^*))
\]
locally uniformly as $t\rightarrow \infty$, where $(\phi_{U_1}(\cdot;\alpha^*),\phi_{U_2}(\cdot;\alpha^*), \phi_L(\cdot;\alpha^*))$ is the unique  solution of
\eqref{ss-3-up}
with $\alpha=\alpha^*$.

\item[\rm(iv)] If $\beta_{U_1}\geq 2>\beta_{U_2}$, then
\[
(w_{U_1}(t,\cdot),w_{U_2}(t,\cdot), w_L(t,\cdot))\rightarrow(\psi_{U_1}(\cdot;\alpha^{**}),\psi_{U_2}(\cdot;\alpha^{**}), \psi_L(\cdot;\alpha^{**}))
\]
locally uniformly as $t\rightarrow \infty$, where $(\psi_{U_1}(\cdot;\alpha^{**}),\psi_{U_2}(\cdot;\alpha^{**}), \psi_L(\cdot;\alpha^{**}))$ is the unique  solution of \eqref{ss-3-up}
with $\alpha=\alpha^{**}$;\\
 If $\beta_{U_2}\geq 2>\beta_{U_1}$ a parallel conclusion holds $($with $U_1$ and $U_2$ interchanged in the above$)$.
\end{enumerate}
\end{Thm}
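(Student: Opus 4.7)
The plan is to prove Theorem \ref{main} case by case, relying on the steady-state classification of Theorem \ref{uul}, a parabolic comparison principle adapted to the Kirchhoff-type junction condition, and a careful analysis of the exponential decay rate at $-\infty$ on each upper branch with supercritical drift. Using $(1,1,1)$ as a stationary supersolution and $(0,0,0)$ as a subsolution, one first obtains global existence together with $0\le w_i(t,\cdot)\le 1$ for all large $t$. Standard parabolic Schauder estimates compatible with the junction conditions then show that any $\omega$-limit point in $C_{loc}$ is a bounded nonnegative classical stationary solution of \eqref{main equation}, hence falls within the list produced by Theorem \ref{uul} together with $(0,0,0)$ and $(1,1,1)$.

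For case (i), with $\beta_{U_1},\beta_{U_2}<2$, I would build a compactly supported subsolution inside each upper branch from the principal eigenfunction of $-\phi''+\beta_{U_k}\phi'$ on a sufficiently long interval (its principal eigenvalue is negative when $\beta_{U_k}<2$), multiplied by a small positive constant. A hair-trigger comparison yields $w_{U_k}\to 1$ locally uniformly; the trace at the junction forces $w_L(t,0)\to 1$, and a matching subsolution on $[0,R]\subset\mathbb{R}_L$ with Dirichlet data close to $1$ at $x=0$ and zero at $x=R$ gives $w_L\to 1$ locally uniformly as well. For case (ii), with all drifts $\ge 2$: on each branch the solution is dominated from above by a scalar whole-line Fisher--KPP solution with appropriate extended compactly supported initial data and drift $\ge 2$, which is known from \eqref{shift-v} to converge to $0$ locally uniformly while keeping $L^\infty$ norm near $1$. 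Local convergence of all three components to $0$ follows directly; the $L^\infty$ assertions follow because the downstream traveling front in $\mathbb{R}_L$ continues to propagate (supporting $\|w_L\|_\infty\to 1$), while the junction boundary data for $w_{U_k}$ decays to zero, trapping $w_{U_k}$ inside a vanishing envelope.

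Cases (iii) and (iv) are the heart of the theorem. In each case, the steady-state set contains a rich family of profiles with slow-decay asymptotics \eqref{slow} on the supercritical upper branches, together with a single distinguished fast-decay profile, namely the $\alpha^*$-profile in case (iii) and the $\alpha^{**}$-profile in case (iv). The key observation is that compactly supported initial data force the pointwise bound $w_{U_k}(t,x)\le C\,e^{\lambda_+(k)\,x}$ as $x\to-\infty$, uniformly in large $t$, where $\lambda_+(k)=\tfrac12(\beta_{U_k}+\sqrt{\beta_{U_k}^2-4})$ and the branch is supercritical ($\beta_{U_k}\ge 2$). I would verify this by using the stationary supersolution $\phi(x)=\min\{1,Ce^{\lambda_+(k)x}\}$ on the branch; a direct computation shows $-\phi''+\beta_{U_k}\phi'\ge\phi-\phi^2$ distributionally (since $\lambda_+(k)(\beta_{U_k}-\lambda_+(k))=1$), and compatibility with the Kirchhoff condition is achieved by pairing with $\phi_L\equiv 1$ on $\mathbb{R}_L$ and with a suitably chosen supersolution on the subcritical upper branch in case (iv). Every $\omega$-limit steady state therefore inherits fast decay at $-\infty$ on each supercritical upper branch, hence by Theorem \ref{uul} must coincide with the unique fast-decay profile. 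The trivial limits $(0,0,0)$ and $(1,1,1)$ are excluded respectively by a small-bump subsolution argument (showing persistence) and by the fast decay just established (which rules out convergence to $1$ on a supercritical branch). Connectedness of the $\omega$-limit set then upgrades subsequential convergence to full convergence.

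The principal obstacle is constructing the exponential barrier with the critical rate $\lambda_+(k)$ and verifying its compatibility with the Kirchhoff flux condition at the junction. The natural supersolution $w_i\equiv 1$ yields no information about the decay rate at $-\infty$, so the barrier must be sharper, and its gluing with profiles on the other branches is precisely what allows the classification in Theorem \ref{uul} to single out the unique fast-decay steady state as the attractor.
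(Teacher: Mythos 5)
Your overall architecture for cases (i), (iii) and (iv) matches the paper's: a compactly supported bump subsolution on the subcritical branches, and, in the supercritical cases, a stationary barrier with the fast decay rate $\lambda_+(k)=\tfrac12(\beta_{U_k}+\sqrt{\beta_{U_k}^2-4})$ glued across the junction (the paper uses $(Me^{k_1x},Me^{k_2x},M)$ rather than capping at $1$, but the Kirchhoff compatibility check is the same), so that every candidate limiting steady state inherits fast decay at $-\infty$ and the classification of Theorem \ref{uul} pins it down as the $\alpha^*$ (resp.\ $\alpha^{**}$) profile. Two points, however, are genuine gaps.

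First, your treatment of case (ii) does not work as stated. A triple of independent whole-line Fisher--KPP solutions, one per branch, is not a supersolution of \eqref{main equation}: it satisfies neither the continuity condition nor the Kirchhoff flux inequality at the junction, and there is no maximum principle allowing you to dominate a single branch by a scalar whole-line solution while ignoring the coupling (e.g.\ if the initial mass sits entirely in $\mathbb{R}_{U_1}$, the zero extension on $\mathbb{R}_L$ gives a whole-line solution that is identically zero there, yet $w_L>0$ for $t>0$). The paper's construction is precisely designed to get around this: it takes a genuine stationary network solution $(\phi_{U_1}(\cdot;\alpha),\phi_{U_2}(\cdot;\alpha),\phi_L(\cdot;\alpha))$ with small $\alpha$ (which exists by Theorem \ref{uul}(II) and satisfies \eqref{slow}), perturbs it by $Me^{-\lambda t}e^{\beta_i x/2}$ on branch $i$ --- the conservation-of-flow identity \eqref{con2} makes these exponential corrections satisfy the Kirchhoff condition exactly --- verifies the supersolution inequality on a truncated, moving region $[l_i,0]\cup[0,\xi(t)]$ with an auxiliary barrier $Me^{k_i x}$ outside, and finally lets $\alpha\to 0$. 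Some device of this kind, respecting the junction conditions, is unavoidable.

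Second, you assert that every $C_{loc}$ $\omega$-limit point is a \emph{stationary} solution "by Schauder estimates." Compactness only yields that limit points generate bounded entire solutions of the parabolic system, not stationary ones; passing to steady states requires either a Lyapunov functional or monotonicity in time. The paper obtains this by sandwiching: it starts an auxiliary solution from a stationary supersolution (so it is nonincreasing in $t$) and another from a stationary subsolution (nondecreasing in $t$), shows both monotone limits are steady states which the decay barrier and Theorem \ref{uul} force to equal the $\alpha^*$ profile, and squeezes $(w_{U_1},w_{U_2},w_L)$ between them. Your "connectedness of the $\omega$-limit set" step is then unnecessary once this is done, but without it the convergence claim is unsupported. (You are entitled to take Theorem \ref{uul} as given here, though be aware that in the paper the uniqueness of $\alpha^*$ and $\alpha^{**}$ is itself only completed inside the proof of this theorem, via Lemma \ref{*=*}.)
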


Let us note that the limiting stationary solutions in cases (iii) and (iv) have rather different behavior: In case (iii) it satisfies \eqref{00}, while \eqref{01} holds  in case (iv).  So when both the upper branches have water flow speed above the threshold level, but the lower branch water flow speed is below the threshold level, i.e., in case (iii), the population persists but stabilises at a function which is increasing in the water flow direction. However, in case (iv), when exactly one of the upper branches has water flow speed below the threshold level, then the population stabilises at a function that is increasing against the water flow direction in this branch, indicating that the population can spread up stream in that branch, while in the other river branches, the population increases in the water flow direction, regardless whether the water flow speed in the lower branch is below or above the threshold level.

\subsection{One upper branch and two lower branches}

Finally, we consider the case that the local river system consists of one upper branch $\mathbb R_U$ and two lower branches $\mathbb R_{L_1}$ and $\mathbb R_{L_2}$. In such a situation,
the problem under consideration reads as
\begin{equation}\label{split}
\begin{cases}
\partial_tw_U-\partial_{xx}w_U+\beta_U\partial_xw_U=w_U-w_U^2, &\ x\in\mathbb{R}_U:=(-\infty,0),t>0,\\
\partial_tw_{L_1}-\partial_{xx}w_{L_1}+\beta_{L_1}\partial_xw_{L_1}=w_{L_1}-w_{L_1}^2, &\ x\in\mathbb{R}_{L_1}:=(0,+\infty),t>0,\\
\partial_tw_{L_2}-\partial_{xx}w_{L_2}+\beta_{L_2}\partial_xw_{L_2}=w_{L_2}-w_{L_2}^2, &\ x\in\mathbb{R}_{L_2}:=(0,+\infty),t>0,\\
w_U(t,0)=w_{L_1}(t,0)=w_{L_2}(t,0),&\ t>0,\\
a_{L_1}\partial_xw_{L_1}(t,0)+a_{L_2}\partial_xw_{L_2}(t,0)=a_U\partial_xw_U(t,0),&\ t>0.\\

\end{cases}
\end{equation}

The corresponding initial conditions are
\begin{equation}\label{ini data-ull}
\begin{cases}
w_i(0,x)=w_{i,0}(x) \mbox{ is nonnegative and continuous in } \mathbb{\bar R}_i,\ \ i=U, L_1, L_2,\\
w_{U,0}(0)=w_{L_1,0}(0)=w_{L_2,0}(0),\\
w_{i,0}(x)\equiv 0 \mbox{ for all large positive $x$ when $i\in\{L_1, L_2\}$},\\
 w_{U,0}(x)\equiv 0 \mbox{ for all large negative $x$}.
\end{cases}
\end{equation}

Similar to the situation for \eqref{main equation}, the stationary solutions $(\phi_{U}, \phi_{L_2}, \phi_{L_2})$ of \eqref{split} that may play a role in the long-time behavor satisfy
$0\leq \phi_{L_1}\leq 1, 0\leq \phi_{L_2}\leq 1, 0\leq \phi_U\leq 1$. Moreover, either
$(\phi_{U}, \phi_{L_1}, \phi_{L_2})\equiv (0,0,0)$, or $(\phi_{U}, \phi_{L_1}, \phi_{L_2})\equiv (1,1,1)$, or
it satisfies
\begin{equation}\label{ss-1-2}
\left\{
\begin{array}{ll}
- \phi_U'' +\beta_U  \phi_U' = \phi_U - \phi^2_U,\; \;\;\;\;\;\; \; 0<\phi_U<1, & x\in \mathbb{R}_U := (-\infty, 0),\\
- \phi_{L_1}'' +\beta_{L_1}  \phi_{L_1}'= \phi_{L_1} - \phi^2_{L_1},\;  0<\phi_{L_1}<1, & x\in \mathbb{R}_{L_1} := (0, +\infty),\\
- \phi_{L_2}'' +\beta_{L_2} \phi_{L_2}' = \phi_{L_2} - \phi^2_{L_2}, \;  0<\phi_{L_2}<1, & x\in \mathbb{R}_{L_2} := (0, +\infty),\\
 \phi_{L_1}(0) = \phi_{L_2}(0) =\phi_U (0) = \alpha \in (0,1), & \\
a_{L_1} \phi_{L_1}'(0) + a_{L_2}  \phi_{L_2}' (0) = a_U \phi_U'(0). &
\end{array}
\right.
\end{equation}

\begin{Thm}\label{ull} The following results hold  for \eqref{ss-1-2}:
\begin{itemize}
\item[\rm(I)] If $\beta_{U} <2$, then \eqref{ss-1-2} has no solution.

\item[\rm(II)]  If $\beta_U, \beta_{L_1}, \beta_{L_2} \geq  2$, then   \eqref{ss-1-2} has a unique solution for every $\alpha\in (0, 1)$.

\item[\rm(III)] If $\beta_{U} \geq  2> \min\{\beta_{L_1},\beta_{L_2}\} $, then there exists $\alpha^* \in (0,1)$
 such that \eqref{ss-1-2} has a
unique solution for each $\alpha\in  [\alpha^{*}, 1)$,  and has no solution for $\alpha
\in (0, \alpha^{*})$.

 \item[\rm(IV)] Whenever \eqref{ss-1-2} has a solution $(\phi_{U}, \phi_{L_1},\phi_{L_2})$, we have
 \[
 \phi_{L_1}'>0,\;\phi_{L_2}'>0,\; \phi_U'>0,\; \phi_{L_1}(+\infty)=\phi_{L_2}(+\infty)=1,\; \phi_U(-\infty)=0.
 \]
 Moreover,  in case {\rm (III)}, as $x\to-\infty$, \eqref{slow-u} holds when $\alpha\in(\alpha^*, 1)$, and \eqref{fast-u} holds when $\alpha=\alpha^{*}$.

\end{itemize}
\end{Thm}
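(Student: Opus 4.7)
The plan is to mirror the scheme of Theorems~\ref{prop:2-bra} and~\ref{uul}: analyse the KPP ODE $-\phi''+\beta\phi'=\phi(1-\phi)$ on each branch by phase-plane methods, parameterise the branch solutions by the common junction value $\alpha\in(0,1)$, and impose the Kirchhoff law at the junction. Structurally this is essentially the two-branch argument of Theorem~\ref{prop:2-bra}, the only difference being that the Kirchhoff identity now acquires two contributions $a_{L_1}P_1(\alpha)+a_{L_2}P_2(\alpha)$ on the lower side instead of one.

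First, on each lower branch $\mathbb{R}_{L_i}$, the lower-branch step of Theorem~\ref{prop:2-bra} gives, for every $\alpha\in(0,1)$, a unique strictly increasing $\phi_{L_i}(\cdot;\alpha)$ on $[0,\infty)$ with $\phi_{L_i}(0)=\alpha$ and $\phi_{L_i}(+\infty)=1$, realising the $1$-dimensional stable manifold of the saddle $(1,0)$; monotonicity follows because any critical point in $(0,1)$ has $\phi''=-\phi(1-\phi)<0$, hence is a local maximum. The slope $P_i(\alpha):=\phi_{L_i}'(0;\alpha)>0$ is continuous in $\alpha$, satisfies $P_i(\alpha)\to 0$ as $\alpha\to 1^-$, and, as $\alpha\to 0^+$, obeys $P_i(\alpha)\sim\mu_-^{L_i}\alpha\to 0$ when $\beta_{L_i}\geq2$ (the KPP front exists), while $P_i(\alpha)\to p_i^*>0$ when $\beta_{L_i}<2$ (the stable manifold of $(1,0)$ then crosses $\phi=0$ transversally at finite $x$). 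On the upper branch, case~(I) ($\beta_U<2$) follows from Theorem~\ref{prop:2-bra}(i): trajectories with $\phi_U(-\infty)=0$ spiral and leave $(0,1)$, while those with $\phi_U(-\infty)=1$ are strictly decreasing so $\phi_U'(0)<0$, incompatible with the positive right-hand side of the Kirchhoff identity. For $\beta_U\geq2$, writing $\mu_\pm^U=(\beta_U\pm\sqrt{\beta_U^2-4})/2$, the linearisation $\phi_U\sim a\,e^{\mu_-^U x}+b\,e^{\mu_+^U x}$ at $(0,0)$ identifies the set of admissible positive slopes at $0$ as a half-open interval $(0,Q(\alpha)]$, where $Q(\alpha)$ is the slope at $\phi=\alpha$ of the ``strong unstable'' orbit of $(0,0)$ (tangent to the fast eigendirection): the three subcases $a>0$, $a=0<b$, $a<0$ correspond respectively to the slow-decay solution~\eqref{slow-u}, the fast-decay solution~\eqref{fast-u}, and a finite-time exit through $\phi=0$. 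Continuous dependence yields continuity of $Q$, with $Q(\alpha)\sim\mu_+^U\alpha$ as $\alpha\to 0^+$ and $Q(\alpha)\to Q^*>0$ as $\alpha\to 1^-$ (for $\beta_U>2$ the KPP front decays at the \emph{slow} rate $\mu_-^U$, so the strong unstable orbit is \emph{not} the front and must cross $\phi=1$ transversally).

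Setting $R(\alpha):=[a_{L_1}P_1(\alpha)+a_{L_2}P_2(\alpha)]/a_U$, the Kirchhoff condition in~\eqref{ss-1-2} reduces to $0<R(\alpha)\leq Q(\alpha)$. In~(II) the boundary expansions $R(\alpha)/\alpha\to\sum(a_{L_i}/a_U)\mu_-^{L_i}$ and $Q(\alpha)/\alpha\to\mu_+^U$ at $\alpha\to 0^+$, together with the flow conservation $a_U\beta_U=a_{L_1}\beta_{L_1}+a_{L_2}\beta_{L_2}$ and the inequality $\mu_-^{L_i}\leq\beta_{L_i}/2\leq\mu_+^U$, force $R<Q$ in the limit; I would then upgrade this to the global strict inequality $R<Q$ on $(0,1)$ by the sub- and supersolution argument on the upper-branch ODE used in Theorem~\ref{prop:2-bra}(ii), producing a unique solution for every $\alpha$. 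In~(III), taking w.l.o.g.\ $\beta_{L_1}<2$, one has $R(0^+)>0=Q(0^+)$ but $R(1^-)=0<Q^*=Q(1^-)$; continuity supplies a crossing $\alpha^*\in(0,1)$, and monotonicity of $\alpha\mapsto Q(\alpha)-R(\alpha)$ (proved by a shooting/comparison argument on the upper-branch ODE, exactly as in Theorem~\ref{prop:2-bra}(iii)) yields uniqueness of $\alpha^*$. Part~(IV) is then immediate from the previous steps: monotonicity of each $\phi$ and the limits at infinity come from Steps~1--2, while strict $R<Q$ places $\phi_U$ in the slow-decay regime~\eqref{slow-u} (case~(II) for every $\alpha$, and case~(III) for $\alpha>\alpha^*$), and equality at $\alpha^*$ gives the fast decay~\eqref{fast-u}. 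The hard part will be precisely the global comparison $R<Q$ in~(II) and the monotonicity of $Q-R$ in~(III); both are expected to carry over from the shooting/comparison analyses already developed for Theorems~\ref{prop:2-bra} and~\ref{uul}, with no essentially new ingredient beyond bookkeeping of the two lower contributions.
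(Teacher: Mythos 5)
Your overall strategy --- phase--plane analysis of the KPP ODE on each branch, parameterisation by the junction value $\alpha$, and matching of slopes through the Kirchhoff law --- is exactly the paper's route (Theorem \ref{prop:1-2-weak} in Section 5.4). Cases (I), (II) and the asymptotics in (IV) are essentially right; for (II) the paper does not need your ``limit at $\alpha\to0^+$ plus upgrade'': after rescaling so that the Kirchhoff weights satisfy $\eta_1+\eta_2=1$, the global strict inequality follows in one line from the fact that the lower-branch stable manifolds $H_i$ lie below $\ell_i^-$ (hence $\Psi_i(\phi)<\tfrac12\phi$) while the critical upper trajectory $\Gamma_3^*$ lies above $\ell_3^+$ (hence $\Psi_3(\phi)>\tfrac12\phi$), so the convex combination is strictly below $\Psi_3$ on all of $(0,1)$.

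The genuine gap is the uniqueness of $\alpha^*$ in case (III). You assert that $\alpha\mapsto Q(\alpha)-R(\alpha)$ is monotone ``by a shooting/comparison argument \dots exactly as in Theorem \ref{prop:2-bra}(iii)'', but that argument compares the slopes of \emph{two individual trajectories} of the planar system at a common intersection point, using $\Psi_i'(\phi_0)=1-\mu_i f(\phi_0)/\Psi_i(\phi_0)$ with $\Psi_1(\phi_0)=\Psi_2(\phi_0)$ and $\mu_1>\mu_2$. Here $R$ is a \emph{weighted sum} $\eta_1\Psi_1+\eta_2\Psi_2$ of two lower-branch trajectories with different $\mu$'s, and at a crossing one must compare $\eta_1\mu_1/\Psi_1+\eta_2\mu_2/\Psi_2$ with $\mu_3/(\eta_1\Psi_1+\eta_2\Psi_2)$; this does not reduce to the two-trajectory computation and fails as a direct transfer (e.g.\ in the sub-case $\beta_{L_1}<2\le\beta_{L_2}$ with $\beta_{L_2}>\beta_U$ the naive bound via $\min\{\mu_1,\mu_2\}$ breaks down). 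The paper itself concedes that the phase plane alone yields only two crossing values $\alpha_1^*\le\alpha_2^*$, with existence on $\{\alpha_1^*\}\cup[\alpha_2^*,1)$ and nonexistence on $(0,\alpha_1^*)$, and closes the gap by an entirely different, PDE-based sliding argument (Lemma \ref{*=*} and Remark \ref{uniq}): one sets $M_*:=\inf\{M>0: M(\phi_1,\phi_2,\phi_3)\ge(\psi_1,\psi_2,\psi_3)\}$ for the two candidate solutions, uses the sharp exponential expansion \eqref{fast-u} of $\phi_U$ at $x\to-\infty$ together with a linear comparison problem on the upper branch to show that $M_*$ can be decreased, and concludes $\alpha_1^*=\alpha_2^*$. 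Your proposal contains no substitute for this step, so as written the uniqueness claim in (III) (and hence the well-definedness of the single threshold $\alpha^*$) is unproved.
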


Our result for the long time dynamics of problem \eqref{split} (including all the possible cases of $\beta_U, \beta_{L_1}, \beta_{L_2}>0$) is stated as follows.

\begin{Thm}\label{main2}
Assume that the nonnegative initial data $(w_{U,0},w_{L_1,0},w_{L_2,0})$ satisfy \eqref{ini data-ull} and
$w_{i,0}\not\equiv 0$ for some $i\in\{U,L_1,L_2\}$. Let $(w_U,w_{L_1},w_{L_2})$ be the solution of \eqref{split}.
Then the following assertions hold:

\begin{enumerate}
\item[\rm(i)] If $\beta_U<2$, then $(w_U(t,\cdot),w_{L_1}(t,\cdot),w_{L_2}(t,\cdot))\rightarrow(1,1,1)$
locally uniformly as $t\rightarrow \infty$.

\item[\rm(ii)] If $\beta_U,\beta_{L_1},\beta_{L_2}\geq 2$, then $(w_U(t,\cdot),w_{L_1}(t,\cdot),w_{L_2}(t,\cdot))\rightarrow(0,0,0)$
locally uniformly as $t\rightarrow \infty$; moreover, $\|w_U(t,\cdot)\|_{L^\infty(\mathbb{R}_U)}\rightarrow 0$, $\|w_{L_1}(t,\cdot)\|_{L^\infty(\mathbb{R}_{L_1})}\rightarrow 1$ and  $\|w_{L_2}(t,\cdot)\|_{L^\infty(\mathbb{R}_{L_2})}\rightarrow 1$ as $t\rightarrow \infty$.

\item[\rm(iii)] If $\beta_U\geq 2>\min\{\beta_{L_1}, \beta_{L_2}\}$, then
$$
(w_U(t,\cdot), w_{L_1}(t,\cdot),w_{L_2}(t,\cdot))\rightarrow(\phi_U(\cdot;\alpha^*),\phi_{L_1}(\cdot;\alpha^*),\phi_{L_2}(\cdot;\alpha^*))
$$
locally uniformly as $t\rightarrow \infty$, where $(\phi_U(\cdot;\alpha^*),\phi_{L_1}(\cdot;\alpha^*),\phi_{L_2}(\cdot;\alpha^*))$ is the unique solution of \eqref{ss-1-2}
with $\alpha=\alpha^*$.

\end{enumerate}
\end{Thm}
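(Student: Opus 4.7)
The plan is to proceed by the three cases of the theorem, paralleling the strategy used in the proof of Theorem~\ref{main} for the dual geometry (two upper branches, one lower). The main ingredients are (a) the comparison principle for \eqref{split} (Section~2), (b) the classification of stationary solutions given by Theorem~\ref{ull}, (c) the single-line Fisher--KPP dichotomy recalled in the introduction, and (d) sub- and supersolutions adapted to the graph geometry and its Kirchhoff condition at the vertex. The upper bound $w\leq 1$ is immediate by comparison with the constant supersolution $(1,1,1)$, and the strong maximum principle makes each component strictly positive for $t>0$. In each case the general scheme is to extract an $\omega$-limit $W=(W_U,W_{L_1},W_{L_2})$ along $t_n\to\infty$ by parabolic regularity, show $W$ is a stationary solution of \eqref{ss-1-2}, and identify $W$ through Theorem~\ref{ull}.

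For case (i), $\beta_U<2$ renders the Dirichlet principal eigenvalue of $-\phi''+\beta_U\phi'-\phi$ on an interval $(-M,-\varepsilon)\subset\mathbb{R}_U$ negative once $M$ is large enough (a standard change of variable $\phi=e^{\beta_U x/2}\psi$ yields the eigenvalue $\beta_U^2/4+\pi^2/(M-\varepsilon)^2-1<0$). Multiplying the associated positive eigenfunction by a small constant and extending by zero on the rest of the graph produces a non-trivial compactly supported stationary subsolution of \eqref{split}; the Kirchhoff condition at the vertex is trivially satisfied because every one-sided derivative there vanishes. Comparison gives $w_U\geq\underline\phi_U$ on $[-M,-\varepsilon]$ for all sufficiently large~$t$, so any $\omega$-limit is non-trivial, and by Theorem~\ref{ull}(I) the only admissible steady state is $(1,1,1)$. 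For case (ii), I would build a time-decaying exponential supersolution of the form $A_i e^{-\lambda_i t+\mu_i x}$ on $\mathbb{R}_U$ (and the analogous shape on each $\mathbb{R}_{L_i}$), with $\mu_i,\lambda_i>0$ satisfying $\lambda_i<\beta_i\mu_i-\mu_i^2-1$; this admits positive solutions precisely because $\beta_i>2$, with a logarithmic correction in the critical case $\beta_i=2$. Comparison forces $w\to 0$ locally uniformly. The sup-norm assertions then follow from the classical KPP spreading phenomenon on each $\mathbb{R}_{L_i}$ (a traveling-wave-like profile reaches values arbitrarily close to~$1$) and, on $\mathbb{R}_U$, from the compactness of the initial support together with the drift carrying mass into the vertex.

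Case (iii) is the core. Assume for concreteness $\beta_{L_1}<2$; the long-interval Dirichlet eigenfunction subsolution of case (i), now built on a sub-interval of $\mathbb{R}_{L_1}$, yields a positive lower bound for $w_{L_1}$ and hence a non-trivial $\omega$-limit, which by parabolic regularity and a monotone-in-$t$ argument is a stationary solution of \eqref{ss-1-2}. The decisive step, and the expected main obstacle, is to show the limiting steady state is the $\alpha=\alpha^*$ solution rather than one of the family at $\alpha\in(\alpha^*,1)$ given by Theorem~\ref{ull}(III). I would exploit the compactness of the initial support together with $\beta_U\geq 2$ as follows. The function $\bar w(x):=Ce^{\sigma_+ x}$ with $\sigma_+:=\frac12(\beta_U+\sqrt{\beta_U^2-4})$ (or $\bar w(x):=Ce^x$ when $\beta_U=2$) solves $-\bar w''+\beta_U\bar w'-\bar w=0$ exactly on $\mathbb{R}_U$, and is therefore a stationary supersolution of the nonlinear equation since $-\bar w^2\leq 0$. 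Picking $C$ large enough that $\bar w\geq w_{U,0}$ on the support of $w_{U,0}$ and $\bar w(0)\geq 1\geq w_U(t,0)$ for all $t$, the comparison principle yields $w_U(t,x)\leq Ce^{\sigma_+ x}$ for all $t>0$ and $x\leq 0$, uniformly in~$t$. Any $\omega$-limit point $\phi_U$ inherits this fast exponential decay as $x\to-\infty$; but by Theorem~\ref{ull}(IV) the only stationary solution of \eqref{ss-1-2} with this decay rate is the one at $\alpha=\alpha^*$ (all others decay at the strictly slower rate $\frac12(\beta_U-\sqrt{\beta_U^2-4})$, or carry a $|x|e^x$ factor in the critical case $\beta_U=2$, both of which would exceed $Ce^{\sigma_+ x}$ for $x\ll 0$). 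This pins the limit down as $(\phi_U(\cdot;\alpha^*),\phi_{L_1}(\cdot;\alpha^*),\phi_{L_2}(\cdot;\alpha^*))$.
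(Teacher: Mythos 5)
Your overall architecture (comparison principle, classification of steady states via Theorem~\ref{ull}, identification of the limit in case (iii) through the decay rate of $\phi_U$ at $-\infty$) matches the paper's, and your case (i) is essentially the paper's argument with an eigenfunction subsolution in place of the logistic Dirichlet solution on a subinterval of $\mathbb{R}_U$. But there are two genuine gaps. First, in case (ii) your supersolution $A_ie^{-\lambda_it+\mu_ix}$ fails exactly in the critical case $\beta_i=2$, which is included in the hypothesis: the condition $\lambda_i<\beta_i\mu_i-\mu_i^2-1$ has no solution with $\lambda_i>0$ when $\beta_i=2$ (the right side has maximum $0$), and no ``logarithmic correction'' rescues an exponential-in-$t$ decay, since the linearized problem only decays algebraically in $t$ there. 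Moreover the comparison cannot be applied globally on $\mathbb{R}_{L_i}$, because the statement you are proving asserts $\|w_{L_i}\|_{L^\infty(\mathbb{R}_{L_i})}\to 1$: no supersolution tending uniformly to $0$ can dominate $w_{L_i}$ for all $x>0$. The paper's device resolves both points at once: it takes $\bar w_i=\phi_i(x;\alpha)+Me^{-\lambda t}e^{\beta_ix/2}$, where the cross term $2\phi_i\cdot Me^{-\lambda t}e^{\beta_i x/2}$ produced by the nonlinearity supplies the strictly positive quantity $2\phi_i(0;\alpha)=2\alpha$ that permits $\lambda>0$ even at $\beta_i=2$, restricts the comparison to the region behind the moving line $x=\xi(t)=\frac{2\lambda}{\beta_{L_i}}t$ on which $\bar w_{L_i}\ge M\ge\|w\|_\infty$, and only then sends $\alpha\to0$.

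Second, in case (iii) your decisive estimate $w_U(t,x)\le Ce^{\sigma_+x}$ and the appeal to the fast/slow decay dichotomy of Theorem~\ref{ull}(IV) are exactly the paper's identification mechanism, but the step ``any $\omega$-limit point is a stationary solution of \eqref{ss-1-2}'' is unjustified for the actual, non-monotone orbit: invariance of the $\omega$-limit set does not make its elements equilibria, and you construct a monotone sandwich only from below. The paper closes this by also evolving the \emph{stationary} supersolution data $(Ce^{\sigma_+x},C,C)$ under \eqref{split}: that orbit is nonincreasing in $t$, dominates $(w_U,w_{L_1},w_{L_2})$, converges to a stationary solution which inherits the bound $\le Ce^{\sigma_+x}$ on $\mathbb{R}_U$ and is therefore the $\alpha^*$ solution; combined with the nondecreasing orbit started from your compactly supported subsolution (whose limit is likewise forced to be the $\alpha^*$ solution, since \eqref{ss-1-2} has no solution with $\alpha<\alpha^*$), this squeezes the true solution. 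You need to add this upper monotone orbit (or some substitute such as a Lyapunov functional) before you can ``pin the limit down.''
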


We note that in this case, when the population persists below the carrying capacity in case (iii), it increases in the water flow direction, regardless of whether  one or both lower branches has water flow speed below the threshold level. In case (ii), the population in the upper river branch converges to 0 uniformly, while that in both lower branches only converges to 0 locally, indicating that the population is washed down stream instead of being wiped out from the river network.

\subsection{Remarks and comments}

We note that the number 2 plays a special role in all the main results here; this  is due to the fact that
 $2$ is the spreading speed for
 \begin{equation}\label{KPP}
 u_t-u_{xx}=u-u^2,
 \end{equation}
which is the equation governing the population growth and spread when the river system is reduced to the special case of one river branch
with 0 waterflow speed.

\begin{re}\label{re1}{\rm
From Theorems \ref{main:two}, \ref{main} and \ref{main2}, we see that the long-time dynamical behavior of the population can be described by a {\it trichotomy}:
\begin{itemize}

\item[(i)] {\bf (washing out)} if the waterflow speed in every branch of the local river system is no less than the critical speed 2, then the population is washed out  in every river branch of the local network;
\item[(ii)] {\bf (persistence at carrying capacity)} if the waterflow speed in every upper branch is  smaller than the critical speed 2, then the population in every branch of the local network goes to the normalized carrying capacity 1 as time goes to infinity;
\item[(iii)]  {\bf (persistence below carrying capacity)} in all the remaining cases, namely  at least one upper branch has waterflow speed no less than 2, and at least one other branch has waterflow speed less than 2,  the population in every river branch of the local network stablizes at a positive steady state strictly below the carrying capacity 1.
\end{itemize}
}
\end{re}
\begin{re}\label{re2}{\rm
The method  in this paper can be further developed to show that the above trichotomy remains valid for the more general situation that the local  river system has  $m$ upper  branches and $n$ lower branches meeting at a common junction point, where $m, n$ are arbitrary positive integers.
}
\end{re}

\begin{re}\label{re3}{\rm
It is possible to determine the spreading speed and profile of our solution along each river branch   by adapting the method of \cite{HNRR}. To keep the paper within a reasonable length, this is not pursued here.
Also, for simplicity, we have only considered the special growth function $f(u)=u-u^2$ in the models here. More general growth functions will be considered in a future work. }
\end{re}

\begin{re} {\rm The local river systems considered here are assumed to be homogeneous in space and in time. This is not realistic. We expect that the phenomena exhibited here persist in many heterogeneous situations, for example, when the environment is periodic in time. However, new techniques need to be developed to handle these more general and natural situations.}
\end{re}

Reaction diffusion equations over graphs arise from many other applications, and we only mention three related works here: In \cite{Y}, a general stability result is obtained for stationary solutions of such equations,  in \cite{CdRMR}, traveling wave solutions are obtained for diffusive equations over  graphs of the type mentioned in Remark \ref{re2} above, and in \cite{JM}, entire solutions on this kind of graphs are considered. We refer to the references in \cite{CdRMR, JPS, R, SCA, SMA, Y} for further works on this topic.

\subsection{Organization of the paper}
The rest of the paper is organized as follows. In section 2, we prepare some preliminary results, including
the comparison principles in the setting of river networks and the existence and uniqueness of solution of
problems \eqref{two branches}, \eqref{main equation} and \eqref{split}. In sections 3, we provide the proof of
Theorem \ref{main:two} and in section 4, we prove Theorems \ref{main} and \ref{main2}. The arguments in sections 3 and 4 are based on results for stationary solutions stated in Theorems \ref{prop:2-bra}, \ref{uul}, \ref{ull}, which are proved  in section 5 by a phase plane approach, except that
only a weaker version of Theorems \ref{uul}, \ref{ull} can be obtained by the phase plane method alone; the proof of these two theorems is completed by making use of an extra technique in section 4 (see Lemma \ref{*=*} and Remark \ref{uniq}). In section 6,  the results obtained here and their biological implications are further discussed.

\section{Preliminaries}

In this section, we will first establish the Phragm\`{e}n-Lindel\"{o}f type comparison principle
for parabolic problems in a river network. Then we derive the existence and uniqueness of solutions
to problems \eqref{two branches}, \eqref{main equation} and \eqref{split}.

We only formulate the results for \eqref{main equation}; the results for \eqref{two branches} and \eqref{split}
are parallel.

\begin{lem}\label{compprinciple} Assume that $c_i(t,x)$ is bounded on
 $[0,T]\times\mathbb{R}_i\,(i=L,U_1,U_2)$ for some $0<T<\infty$. Let $w_i\in C([0,T]\times\overline{\mathbb{R}}_i)\cap C^{1,2}((0,T]\times\overline{\mathbb{R}}_i)\,(i=L, U_1, U_2)$ satisfy
 \begin{equation}\nonumber
\begin{cases}
\partial_tw_i-\partial_{xx}w_i+\beta_i\partial_xw_i+c_i(x,t)w_i\leq0, &\ x\in\mathbb{R}_i,0<t<T,\\
w_L(t,0)=w_{U_1}(t,0)=w_{U_2}(t,0),&\ t>0,\\
a_{U_1}\partial_xw_{U_1}(t,0)+a_{U_2}\partial_xw_{U_2}(t,0)-a_L\partial_xw_L(t,0)\leq0,&\ 0<t<T,\\
w_i(x,0)\leq0,&\ 0<t<T
\end{cases}
\end{equation}
and
 \begin{equation}\label{*1}
 \liminf_{R\to\infty}\,e^{-cR^2}\Big[\max_{0\leq t\leq T,\, |x|=R}w_i(t,x)\Big]\leq0
 \end{equation}
for some positive constant $c$. Then we have
 $$
 w_i(t,x)\leq 0,\ \  \forall (t,x)\in[0,T]\times\mathbb{R}_i,\ i=L, U_1, U_2.
 $$
If additionally $w_j(0,\cdot)\leq,\not\equiv0$ for some $j\in\{L, U_1, U_2\}$, then
 $$
 w_i(t,x)<0,\ \  \forall (t,x)\in(0,T]\times\mathbb{R}_i,\ \ i=L, U_1, U_2.
 $$
\end{lem}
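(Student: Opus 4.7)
The plan is a Phragm\'en--Lindel\"of style comparison argument: introduce a Gaussian barrier that is symmetric in $x$ (so that the Kirchhoff condition is automatically respected at the junction), use it to reduce the unbounded-domain problem to a maximum principle on a bounded sub-cylinder of each branch, and then send the cut-off radius to infinity using \eqref{*1}.

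First, replacing $w_i$ by $e^{-Kt}w_i$ with $K$ larger than the $L^\infty$ bound on $|c_i|$, I may assume $c_i\geq 1$ on each branch (all other hypotheses are invariant). Then, choosing $\gamma>c$ (so the barrier eventually dominates $w_i$ via \eqref{*1}) and $\nu$ large, consider the even-in-$x$ function
\[
h(t,x):=(1-\nu t)^{-1/2}\exp\!\left(\frac{\gamma x^{2}}{1-\nu t}\right),\qquad 0\leq t\leq T_{0}<\frac{1}{\nu}.
\]
A direct computation gives $\partial_t h-\partial_{xx}h=\frac{\nu-4\gamma}{2(1-\nu t)}\bigl(1+\frac{2\gamma x^{2}}{1-\nu t}\bigr)h$, and completing the square in the drift term $\beta_i\partial_x h=\frac{2\beta_i\gamma x}{1-\nu t}h$ shows that, once $\nu$ is taken large depending on $\gamma$ and $\max_i\beta_i$, one has $\partial_t h-\partial_{xx}h+\beta_i\partial_x h+c_i h>0$ on each branch for $t\in[0,T_0]$. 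Because $h$ is even in $x$, $h(t,0)$ is common to the three branches and $h_x(t,0)=0$, so $h$ satisfies the continuity condition and the Kirchhoff equality at the junction.

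Given $\epsilon>0$, put $v_i:=w_i-\epsilon h$. Then $v_i$ inherits the parabolic inequality (strictly), the continuity at $0$, and the Kirchhoff inequality, while $v_i(0,\cdot)<0$. Hypothesis \eqref{*1} supplies $R_n\to\infty$ along which $\max_{|x|=R_n}w_i=o(e^{cR_n^{2}})$, whereas $h(t,\pm R_n)\geq e^{\gamma R_n^{2}}$ with $\gamma>c$; so $v_i<0$ on $\{|x|=R_n\}$ for all large $n$. On the bounded sub-cylinder $[0,T_0]\times\bigl(\{|x|\leq R_n\}\cap\overline{\mathbb{R}}_i\bigr)$ a putative positive maximum of $v_i$ is therefore attained at some $(t^{*},x^{*})$ with $t^{*}>0$ and $|x^{*}|<R_n$. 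If $x^{*}\neq 0$, the strict parabolic inequality together with $c_i\geq 1$ contradicts the usual sign relations $\partial_t v_i\geq 0$, $\partial_x v_i=0$, $\partial_{xx}v_i\leq 0$ at an interior max. If $x^{*}=0$, continuity forces the three $v_k$ to achieve simultaneous positive maxima at $(t^{*},0)$, and the parabolic Hopf lemma on each branch yields $\partial_x v_L(t^{*},0)<0$ and $\partial_x v_{U_j}(t^{*},0)>0$ strictly, which violates the Kirchhoff inequality for $v_i$. Hence $v_i\leq 0$; letting $n\to\infty$ and $\epsilon\to 0$ gives $w_i\leq 0$ on $[0,T_0]\times\overline{\mathbb{R}}_i$, and iterating on $[kT_0,(k+1)T_0]$ covers $[0,T]$ since $T_0$ is independent of the initial data.

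For the strict statement under $w_j(0,\cdot)\leq,\not\equiv 0$, the parabolic strong maximum principle applied on the single branch $\mathbb{R}_j$ gives $w_j<0$ in $(0,T]\times\mathbb{R}_j$. If $w_i(t^{*},x^{*})=0$ at some $(t^{*},x^{*})\in(0,T]\times\mathbb{R}_i$, a chain-of-cylinders application of the strong maximum principle forces $w_i\equiv 0$ on $[0,t^{*}]\times\mathbb{R}_i$, hence $w_i(t,0)=0$ on $[0,t^{*}]$; continuity at the junction then propagates this across every branch, and the Hopf/Kirchhoff combination of the first part shows that the only consistent configuration is $w_k\equiv 0$ near $(t^{*},0)$ for all $k$, which propagated back to $t=0$ contradicts $w_j(0,\cdot)\not\equiv 0$. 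The main obstacle throughout is the junction point, which is not a smooth piece of boundary: the barrier must be built so as to simultaneously satisfy the strict parabolic inequality \emph{and} preserve the Kirchhoff condition (handled here by the parity of $h$ in $x$), and the Hopf lemma must be combined with the Kirchhoff inequality to rule out a positive maximum at $x=0$.
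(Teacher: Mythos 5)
Your argument is correct and is essentially the proof the paper invokes by reference: the Gaussian barrier $(1-\nu t)^{-1/2}\exp\bigl(\gamma x^{2}/(1-\nu t)\bigr)$ with the time-stepping in $T_0<1/\nu$ is exactly the Phragm\`en--Lindel\"of device of \cite[Theorem 10, Chapter 3]{PW}, and your treatment of the junction (evenness of the barrier to preserve the Kirchhoff condition, plus the Hopf lemma on each branch to exclude a positive maximum at $x=0$ and to propagate strict negativity) is the content of \cite[Lemmas A.1, A.2]{JPS}, which the paper substitutes for the boundary-point step. No genuine gap; the only point worth noting is that \eqref{*1} must be read as furnishing a single sequence $R_n\to\infty$ valid for all three branches simultaneously, which is how both you and the cited argument use it.
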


The proof of the first assertion of Lemma \ref{compprinciple} is the same as that of
\cite[Theorem 10, Chapter 3]{PW}, in which Theorem 7 there should be replaced by
\cite[Lemmas A.1, A2]{JPS}. The details are omitted here. Applying \cite[Lemmas A.1, A2]{JPS}
again, the second assertion of Lemma \ref{compprinciple} holds.

We next introduce the definition of supersolution and subsolution.
If $(\tilde w_L, \tilde w_{U_1}, \tilde w_{U_2})$ with $w_i\in C([0,T]\times\overline{\mathbb{R}}_i)\cap
C^{1,2}((0,T]\times\overline{\mathbb{R}}_i)\,(i=L, U_1, U_2)$ for some $T>0$ satisfies
\begin{equation}\label{supsub}
\begin{cases}
\partial_t \tilde{w}_i-\partial_{xx}\tilde{w}_i+\beta_i\partial_x\tilde{w}_i
\geq(\leq)f_i(\tilde{w}_i), &\ x\in\mathbb{R}_i,0<t<T,\\
\tilde{w}_L(t,0)=\tilde{w}_{U_1}(t,0)=\tilde{w}_{U_2}(t,0),&\ 0<t<T,\\
a_{U_1}\partial_x\tilde{w}_{U_1}(t,0)+a_{U_2}\partial_x\tilde{w}_{U_2}(t,0)-a_L\partial_x\tilde{w}_L(t,0)\geq(\leq)0,&\ 0<t<T,
\end{cases}
\end{equation}
we say $(\tilde w_L, \tilde w_{U_1}, \tilde w_{U_2})$ is a supersolution (or subsolution) of \eqref{supsub}.

Then using Lemma \ref{compprinciple}, we can conclude that

\begin{lem}\label{supsub-result}
Assume that $f_i(s)$ is locally Lipschitz $(i=L, U_1, U_2)$. Let $(\underline{w}_L, \underline{w}_{U_1}, \underline{w}_{U_2})$ and
$(\overline{w}_L, \overline{w}_{U_1}, \overline{w}_{U_2})$ be, respectively, a  bounded subsolution and
a bounded supersolution of \eqref{supsub} satisfying $\underline{w}_i(0,\cdot)\leq \overline {w}_i(0,\cdot)$
for $i=L, U_1, U_2$.
  Then, $\underline{w}_i\leq\overline w_i$ for
$i=L, U_1, U_2$. If additionally  $\underline{w}_i(0,\cdot)\leq,\not\equiv \overline{w}_i(0,\cdot)$
for some $i\in\{L, U_1, U_2\}$, then $\underline{w}_i<\overline{w}_i$ for
$i=L, U_1, U_2$ and $t\in (0, T]$.

\end{lem}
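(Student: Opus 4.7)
The plan is to reduce the statement to the Phragm\`en-Lindel\"of type comparison principle already established in Lemma \ref{compprinciple}. Set $w_i := \underline{w}_i - \overline{w}_i$ for $i=L, U_1, U_2$. Since $\underline{w}_i$ and $\overline{w}_i$ are both bounded on $[0,T]\times \overline{\mathbb{R}}_i$, they take values in some common bounded interval $[-M, M]$, and by the local Lipschitz continuity of $f_i$ there is a bounded function
$$c_i(t,x) := -\frac{f_i(\underline{w}_i(t,x)) - f_i(\overline{w}_i(t,x))}{\underline{w}_i(t,x) - \overline{w}_i(t,x)}$$
(interpreted as $0$ at points where $\underline{w}_i = \overline{w}_i$) such that $|c_i(t,x)|$ is bounded by the Lipschitz constant of $f_i$ on $[-M,M]$.

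Subtracting the supersolution inequality from the subsolution inequality, I would verify that $w_i$ satisfies, on $(0,T]\times \mathbb{R}_i$,
$$\partial_t w_i - \partial_{xx} w_i + \beta_i \partial_x w_i + c_i(t,x) w_i \leq 0.$$
At the junction, continuity gives $w_L(t,0) = w_{U_1}(t,0) = w_{U_2}(t,0)$, and combining the two Kirchhoff-type inequalities (which have opposite signs for super and subsolutions) yields
$$a_{U_1}\partial_x w_{U_1}(t,0) + a_{U_2}\partial_x w_{U_2}(t,0) - a_L \partial_x w_L(t,0) \leq 0.$$
The initial inequality $w_i(0,\cdot) \leq 0$ is immediate, and the growth hypothesis \eqref{*1} holds trivially for any $c>0$ because $w_i$ is uniformly bounded on $[0,T]\times \mathbb{R}_i$.

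All hypotheses of Lemma \ref{compprinciple} are then met, so its first assertion gives $w_i \leq 0$, i.e.\ $\underline{w}_i \leq \overline{w}_i$ on $[0,T]\times \overline{\mathbb{R}}_i$. When additionally $\underline{w}_j(0,\cdot) \leq, \not\equiv \overline{w}_j(0,\cdot)$ for some $j\in\{L, U_1, U_2\}$, the second assertion of Lemma \ref{compprinciple} yields the strict inequality $\underline{w}_i < \overline{w}_i$ on $(0,T]\times \overline{\mathbb{R}}_i$ for every $i$. There is no real obstacle here; the only point that requires a moment of care is the correct pairing of signs when subtracting the two Kirchhoff inequalities, which is precisely what makes Lemma \ref{compprinciple} applicable with the $\leq 0$ boundary sign.
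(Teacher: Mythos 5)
Your proposal is correct and is precisely the argument the paper intends: the paper gives no written proof of this lemma beyond the remark that it follows from Lemma \ref{compprinciple}, and your reduction (taking $w_i=\underline{w}_i-\overline{w}_i$, linearizing $f_i$ via local Lipschitz continuity to get a bounded $c_i$, checking the junction sign and noting that boundedness makes \eqref{*1} trivial) is exactly that deduction. No gaps.
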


\begin{re}\label{r1}  Suppose that $\xi_1(t)$ and $\xi_2(t)$ are continuous functions of $t\in [0, T]$ with $\xi_1(t)<0<\xi_2(t)$ for $t\in [0, T]$. Then Lemma \ref{compprinciple} holds when $\mathbb R_i$ is replaced by $\mathbb R_i\cap (\xi_1(t), \xi_2(t))$ and \eqref{*1} is replaced by
\[
w_i(t, \xi_j(t))\leq 0 \mbox{ for $j=1,2$, \; $t\in [0, T]$}.
\]
Analogously, Lemma \ref{supsub-result} holds when $\mathbb R_i$ is replaced by $\mathbb R_i\cap (\xi_1(t), \xi_2(t))$ and we assume additionally
\[
\underline w_i(t,\xi_j(t))\leq\overline w_i(t,\xi_j(t)) \mbox{ for  $j=1,2$, \; $t\in [0, T]$ and $i\in\{L, U_1, U_2\}$.}
\]
\end{re}

Now we prove the main result of this section: existence and uniqueness of solution to
problem \eqref{main equation}.

\begin{Thm}\label{existence-1}
For any nonnegative initial data $(w_{L,0}, w_{U_1,0}, w_{U_2,0})$ satisfying \eqref{ini data}, problem
\eqref{main equation}  has a unique classical solution $(w_L, w_{U_1}, w_{U_2})$ which is defined
and is uniformly bounded for all $t>0$.
\end{Thm}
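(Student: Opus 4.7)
Suppose $(w_L^j, w_{U_1}^j, w_{U_2}^j)$, $j=1,2$, are two bounded classical solutions of \eqref{main equation} on $[0,T]$ with the same initial data. The differences $W_i := w_i^1 - w_i^2$ satisfy the linear equation
\[
\partial_t W_i - \partial_{xx} W_i + \beta_i \partial_x W_i + c_i(t,x) W_i = 0,
\]
with $c_i = w_i^1 + w_i^2 - 1$ bounded, together with the continuity and Kirchhoff conditions at $x=0$ inherited from \eqref{main equation}. Boundedness makes the growth condition \eqref{*1} trivial, so Lemma \ref{compprinciple} applied to both $(W_L,W_{U_1},W_{U_2})$ and its negative forces $W_i \equiv 0$.

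\textbf{A priori bound and construction via truncation.} The constant triple $(M,M,M)$ with $M := \max\{1, \|w_{L,0}\|_\infty, \|w_{U_1,0}\|_\infty, \|w_{U_2,0}\|_\infty\}$ is a supersolution of \eqref{supsub}, because $f(M) = M(1-M) \leq 0$ and the continuity and Kirchhoff sign conditions are trivial for constants; likewise $(0,0,0)$ is a subsolution. To build a solution I would truncate the graph: for every $R$ larger than the radius of support of the initial data, consider the problem on the finite network obtained by replacing $\mathbb{R}_L$ by $[0,R]$ and each $\mathbb{R}_{U_i}$ by $[-R,0]$, with homogeneous Dirichlet conditions imposed at $x = \pm R$. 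On this finite network I would use a monotone iteration between $(0,0,0)$ and $(M,M,M)$ via the linearized scheme
\[
(\partial_t + K - \partial_{xx} + \beta_i \partial_x)\, u^{n+1}_i = K u^n_i + f(u^n_i),
\]
where $K$ is chosen so that $s \mapsto Ks + f(s)$ is nondecreasing on $[0, M]$. Each step reduces to a linear parabolic problem on a finite metric graph with continuity plus Kirchhoff transmission at the interior vertex and Dirichlet conditions at the outer ends, which is solvable by standard $L^2$-variational theory combined with Schauder regularity. The iterates are monotone and bounded, hence converge in $C^{1,2}_{\text{loc}}$ to a classical solution $(w_L^R, w_{U_1}^R, w_{U_2}^R)$ on the truncated graph, satisfying $0 \leq w_i^R \leq M$ by the version of Lemma \ref{supsub-result} on bounded domains (see Remark \ref{r1}).

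\textbf{Passage to the limit.} Interior parabolic Schauder estimates on each branch, together with local estimates near the junction, give bounds uniform in $R$ for $(w_L^R, w_{U_1}^R, w_{U_2}^R)$ in $C^{1+\alpha/2, 2+\alpha}$ on every compact subset of $\overline{\mathbb{R}}_i \times (0,T]$. A diagonal subsequence then converges as $R \to \infty$ to a limit that solves \eqref{main equation} classically on each branch, and the junction continuity and Kirchhoff conditions pass to the limit pointwise at $x=0$. The uniqueness proved above forces the full sequence to converge, and repeating the construction on $[T, 2T], [2T, 3T], \ldots$ (using the uniform $L^\infty$ bound $M$) yields a global-in-time solution.

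\textbf{Main obstacle.} The most delicate point is obtaining regularity of the approximants uniformly up to the junction $x=0$, because the Kirchhoff condition is a non-standard transmission condition rather than a classical Dirichlet or Neumann condition, and hence standard Schauder theory does not apply directly there. The way I would handle this is to observe that, in a neighborhood of the vertex, the junction operator is self-adjoint with respect to the weighted inner product $\sum_i a_i \int u_i v_i\, dx$ and generates an analytic semigroup, which provides the required parabolic smoothing; alternatively, one may reduce the vertex problem to a classical one by a reflection argument using the continuity condition, or appeal directly to the network parabolic theory used in \cite{JPS}.
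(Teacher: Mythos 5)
Your proposal is correct, and its core machinery --- truncation to finite star graphs with Dirichlet conditions at the outer ends, a constant supersolution giving the uniform $L^\infty$ bound, interior Schauder estimates, and a diagonal limit --- is exactly what the paper uses for the boundedness part of the argument. Where you genuinely diverge is in how existence and uniqueness on the full network are supplied. You prove uniqueness directly by applying the Phragm\`en--Lindel\"of comparison principle (Lemma \ref{compprinciple}) to the difference of two bounded solutions, which is self-contained and arguably cleaner; the paper instead obtains existence and uniqueness in one stroke by transforming \eqref{main equation} into an equivalent half-line system of the form (9.1)--(9.3) of \cite{vonB} and invoking the parabolic theory there. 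Likewise, on the truncated graphs you construct the solution by monotone iteration between $(0,0,0)$ and $(M,M,M)$, whereas the paper simply cites \cite[Lemma A.7]{JPS} for solvability of the truncated nonlinear problem and \cite[Lemmas A.3, A.6]{JPS} for monotonicity in the truncation parameter and the a priori bound. The delicate point you correctly flag --- Schauder regularity uniformly up to the Kirchhoff vertex --- is precisely what the von Below transformation is designed to resolve: it converts the transmission conditions into a standard boundary-value problem on a half-line, after which interior $L^p$ and Schauder estimates apply. Be aware that the reflection trick you mention does not work directly for three edges with unequal weights $a_i$, so in practice your route still falls back on the von Below/\cite{JPS} machinery at that step; what your approach buys is a more elementary and self-contained treatment of uniqueness and of the nonlinear solvability on the truncations, at the cost of having to justify the linear network solvability and junction regularity by essentially the same citations the paper uses.
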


\begin{proof} We first show the existence and uniqueness of solution of \eqref{main equation}
by adopting the approach of \cite{vonB}. Following such an approach, we can transform
\eqref{main equation} to an equivalent half-line problem of the form (9.1)-(9.3) in \cite{vonB}
defined for $(t,x)\in(0,T)\times(0,\infty)$ with compactly supported initial data.
Then the standard theory guarantees that such an equivalent problem
(and so the original problem \eqref{main equation}) admits a unique classical solution,
 defined for all time $t>0$.

It remains to show the uniform boundedness of $w_i$, $i\in\{L, U_1, U_2\}$. For such a purpose,
given any $\ell>0$, we consider the following auxiliary problem
with zero Dirichlet boundary conditions:
\begin{equation}\label{Dirichlet}
\begin{cases}
\partial_tw_L-\partial_{xx}w_L+\beta_L\partial_xw_L=w_L(1-w_L), &\ x\in(0,\ell),\\
\partial_tw_i-\partial_{xx}w_i+\beta_i\partial_xw_i=w_i(1-w_i),\; i\in\{U_1, U_2\}, &\ x\in(-\ell,0),\\
w_L(t,0)=w_{U_1}(t,0)=w_{U_2}(t,0),&\ t>0,\\
w_L(t,\ell)=w_{U_1}(t,-\ell)=w_{U_2}(t,-\ell)=0,&\ t>0,\\
a_L\partial_xw_L(t,0)=a_{U_1}\partial_xw_{U_1}(t,0)+a_{U_2}\partial_xw_{U_2}(t,0),&\ t>0,\\
w_L(0,x)=w_{L,0}(x), &\ x\in (0,\ell),\\
w_{i}(0,x)=w_{i,0}(x),\; i\in\{U_1, U_2\},&\ x\in (-\ell,0).
\end{cases}
\end{equation}
By \cite[Lemma A.7]{JPS}, \eqref{Dirichlet} has a unique classical solution,
denoted by $(w_L^\ell, w_{U_1}^\ell, w_{U_2}^\ell)$, which is defined globally in $t$.
Thanks to \cite[Lemmas A.3, A.6]{JPS}, the  $(w_i^\ell)_{i=L, U_1, U_2}$ is
nondecreasing with respect to $\ell$, and for all $\ell>0$ it holds
 \bes
 \label{bound}
 0\leq w_i^\ell\leq 1+\sup_{\mathbb{R}_L}w_{L,0}+\sup_{\mathbb{R}_{U_1}}w_{U_1,0}+\sup_{\mathbb{R}_{U_2}}w_{U_2,0},\ i=L, U_1, U_2.
 \ees
Thus, the limit $\lim_{\ell\to\infty}(w_i^\ell(t,x))_{i=L, U_1, U_2}$ exists,
denoted by $(w_i^\infty(t,x))_{i=L, U_1, U_2}$.

On the other hand, following the same procedure as in \cite{vonB}, one can transform
\eqref{Dirichlet} to a well-stated initial-boundary value problem of
the form (9.1)-(9.3) in \cite{vonB}. Then, in light of \eqref{bound}, applying the standard
interior $L^p$ and Schauder estimates to such a parabolic system and then coming
back to the original problem \eqref{Dirichlet}, we can conclude that,
given constants $0<\epsilon_0<1$ and $\ell_0>0$,
 $$
 \|w_L^\ell\|_{C^{1+\frac{\alpha}{2},2+\alpha}([\epsilon_0,\frac{1}{\epsilon_0}]\times[0,\ell_0])},\
 \|w_{U_1}^\ell\|_{C^{1+\frac{\alpha}{2},2+\alpha}([\epsilon_0,\frac{1}{\epsilon_0}]\times[-\ell_0,0])},\
 \|w_{U_2}^\ell\|_{C^{1+\frac{\alpha}{2},2+\alpha}([\epsilon_0,\frac{1}{\epsilon_0}]\times[-\ell_0,0])}\leq C_0,
 $$
for some positive constants $\alpha\in(0,1)$ and $C_0$ with $C_0$ being independent of $\ell$ once $\ell>\ell_0$.
Therefore, through a standard diagonal process,  together with the compact embedding theorem,
we see that $w_i^\ell$ converges to $w_i^\infty$ locally in the usual $C^{1,2}$ norm ($i=L, U_1, U_2$),
and $(w_i^\infty)_{i=L, U_1, U_2}$ is a classical solution of \eqref{main equation} (the initial condition can be easily checked separately).

Thus, by uniqueness we must have $(w_i)_{i=L, U_1, U_2}=(w_i^\infty)_{i=L, U_1, U_2}$.
In view of \eqref{bound}, it is clear that
 \bes
 \nonumber
 0\leq w_i\leq 1+\sup_{\mathbb{R}_L}w_{L,0}+\sup_{\mathbb{R}_L}w_{L,0}+\sup_{\mathbb{R}_{U_2}}w_{U_2,0},\ \forall i\in\{L, U_1, U_2\}.
 \ees
That is, $(w_i)_{i=L, U_1, U_2}$ is uniformly bounded. The proof is thus complete.
\end{proof}

Similar arguments as above show that problems \eqref{two branches} and \eqref{split} admit
a unique classical and uniformly bounded solution for given nonnegative initial data.

\section{The two-branches problem}

 We prove Theorem \ref{main:two}} in this section by making use of Theorem \ref{prop:2-bra}. The proof of Theorem \ref{prop:2-bra}
 is given in section 5 by a phase plane approach, which is rather long and very different in nature to the techniques used here.

 According to the  behavior of the solution, we distinguish
three cases:

\smallskip

\ \ \ {\bf (i):}\ $2>\beta_U$;\ \ {\bf (ii):}\ $\beta_L, \beta_U\geq 2$;\ \ {\bf (iii):}\  $\beta_U\geq 2>\beta_L$.

\smallskip
\noindent
Clearly these three cases exhaust all the possible cases of the positive parameters $\beta_U$ and $\beta_L$.

\subsection{Case (i):\, $2>\beta_U$}   In this case, we have
\begin{Thm}\label{(i)}
Assume that $\beta_U<2$, and the initial function $w_0\in C_{comp}(\mathbb{R})$ is nonnegative and $w_0\not\equiv 0$.
Let $(w_L,w_U)$ be the solution of \eqref{two branches}. Then
 $$
 (w_L(t,\cdot),w_U(t,\cdot))\rightarrow(1,1)\ \ \ \mbox{locally uniformly as $t\rightarrow \infty$.}
  $$
\end{Thm}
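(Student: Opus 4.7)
The plan is to exploit Theorem \ref{prop:2-bra}(i): since $\beta_U<2$, the only bounded nonnegative stationary solutions of \eqref{two branches} are $(0,0)$ and $(1,1)$. It therefore suffices to produce a monotone increasing lower bound for $(w_L,w_U)$ whose time-limit is a \emph{nontrivial} steady state; by the above dichotomy, this limit must be $(1,1)$.

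For the upper side, comparison with the spatially constant supersolution $z(t)$ solving $\dot z=z(1-z)$, $z(0)=\|w_0\|_\infty+1$, gives $w_L,w_U\le z(t)$ via Lemma \ref{supsub-result}, so $\limsup_{t\to\infty}(w_L,w_U)\le(1,1)$ locally uniformly. For the lower side, I construct a stationary subsolution supported away from the junction. The key point is that $\beta_U<2$ makes the principal Dirichlet eigenvalue
\[
\lambda_1=\pi^2/\ell^2+\beta_U^2/4-1
\]
of $-\partial_{xx}+\beta_U\partial_x-1$ on an interval of length $\ell>\pi/\sqrt{1-\beta_U^2/4}$ strictly negative. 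Let $\psi>0$ be the associated principal eigenfunction on $(-\ell-1,-1)\subset\mathbb{R}_U$, scaled so that $\max\psi\le|\lambda_1|$; a direct computation gives
\[
-\psi''+\beta_U\psi'=(1+\lambda_1)\psi\le \psi-\psi^2\quad\text{on }(-\ell-1,-1).
\]
Extending $\psi$ by zero on $\mathbb{R}_U$ defines $\underline\phi_U$, and setting $\underline\phi_L\equiv0$ on $\mathbb{R}_L$ produces a stationary subsolution $(\underline\phi_L,\underline\phi_U)$ of \eqref{two branches}. The continuity and Kirchhoff conditions at the junction hold trivially since $\underline\phi_U\equiv 0$ near $x=0$; the upward-jump corner singularities of $\underline\phi_U$ at $x=-\ell-1,-1$ point in the favourable direction for a subsolution inequality and can be smoothed by a routine mollification if classical regularity is required.

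By the strong comparison (Lemma \ref{compprinciple}), $w_U(1,\cdot)>0$ on the compact set $[-\ell-1,-1]$, so one can pick $\epsilon\in(0,1)$ with $w_U(1,\cdot)\ge\epsilon\underline\phi_U$ and $w_L(1,\cdot)\ge\epsilon\underline\phi_L\equiv 0$; concavity at $0$ of the reaction keeps $\epsilon\underline\phi_U$ a subsolution. Let $(\tilde w_L,\tilde w_U)$ solve \eqref{two branches} with initial data $(\epsilon\underline\phi_L,\epsilon\underline\phi_U)$. Because the initial data is a subsolution, comparing $(\tilde w_L,\tilde w_U)$ with its time-translates via Lemma \ref{supsub-result} shows $t\mapsto(\tilde w_L(t,\cdot),\tilde w_U(t,\cdot))$ is nondecreasing; being uniformly bounded, it converges pointwise to some $(W_L,W_U)$, and standard interior $L^p$/Schauder estimates combined with a diagonal argument (as in the proof of Theorem \ref{existence-1}) upgrade this to $C^{1,2}_{loc}$ convergence, so $(W_L,W_U)$ is a classical steady state of \eqref{two branches} with $W_U\ge\epsilon\underline\phi_U\not\equiv0$. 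Theorem \ref{prop:2-bra}(i) rules out any solution of \eqref{ss} when $\beta_U<2$, so the only bounded nonnegative steady states of \eqref{two branches} are $(0,0)$ and $(1,1)$; hence $(W_L,W_U)\equiv(1,1)$. Then $w_L(1+t,\cdot)\ge\tilde w_L(t,\cdot)\to 1$ and $w_U(1+t,\cdot)\ge\tilde w_U(t,\cdot)\to 1$ locally uniformly, which together with the upper bound proves the theorem.

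The main technical obstacle is the construction of a junction-compatible stationary subsolution, resolved here by placing the support of $\underline\phi_U$ strictly inside $\mathbb{R}_U$ so that the continuity and Kirchhoff conditions become vacuous; this reduces the whole construction to the classical Dirichlet principal-eigenvalue argument on a bounded interval, which is precisely where the critical threshold $\beta_U=2$ enters.
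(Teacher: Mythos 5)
Your proposal is correct and follows essentially the same route as the paper: an ODE supersolution for the upper bound, a compactly supported subsolution in $\mathbb{R}_U$ generating a monotone-in-time lower solution whose limit is a nontrivial steady state, and Theorem \ref{prop:2-bra}(i) to conclude that this limit must be $(1,1)$. The only (immaterial) difference is that you build the bump from a scaled linear principal eigenfunction on an interval strictly inside $\mathbb{R}_U$, whereas the paper uses the positive solution of the nonlinear logistic Dirichlet problem on $(-l,0)$ with $l$ just above the critical length; both hinge on $\beta_U<2$ in the same way.
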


\begin{proof}  First of all, following the proof of Theorem \ref{existence-1} we have
 \bes
 \label{bound-two}
 0\leq w_i\leq 1+\sup_{\mathbb{R}}w_{0},\ \ (i=L, U).
 \ees
As $\|w_0\|_{L^\infty(\mathbb{R})}>0$, we take
$\bar{u}(t)=1+\|w_0\|_{L^\infty(\mathbb{R})}e^{-t}$. Clearly,
$\bar{u}(0)=1+\|w_0\|_{L^\infty(\mathbb{R})}>w_0(x)$ and
$\partial_t\bar{u}(t)=1-\bar{u}(t)\geq\bar{u}(t)(1-\bar{u}(t))$. Thus,
$(\bar{u},\bar{u})$ is a supersolution of problem \eqref{two branches}.
So Lemma \ref{supsub-result}, together with \eqref{bound-two}, gives
 \bes
 \label{a1}
 1=\lim_{t\rightarrow \infty}\bar{u}(t)\geq
 \limsup_{t\rightarrow \infty}\|w_i(t,\cdot)\|_{L^\infty(\mathbb{R}_i)},\ \ (i=L, U).
 \ees

Since $w_0\not\equiv 0$ is nonnegative, $w_U(1,x)>0$ for $x\in\mathbb{R}_U$.
Because of $0<\beta_U<2$, by standard results on logistic equations,  we know that
there exists a unique constant $l_0>0$ such that the following problem
 \begin{equation}\label{wl}
 \begin{cases}
 -w''+\beta_Uw'=w(1-w) ,& x\in(-l,0),\\
 w(0)=w(-l)=0
 \end{cases}
 \end{equation}
 has a positive solution if and only if $l>l_0$, and the positive solution $w_l$ is unique and satisfies $\|w_l\|_\infty\to 0$ as $l\to l_0$.
 Therefore by fixing $l>l_0$ close to $l_0$, we can make sure that the unique solution $w_l$ of the above problem satisfies
 \[
 w_l(x)<w_U(1,x)\;  \mbox{ for }  x\in[-l,0].
 \]

We set $w_l^0=w_l$ on $[-l,0]$ and $w_l^0=0$ on $(-\infty,-l)$. Then let  $(\underline{w}_L,\underline{w}_U)$
be the solution of \eqref{two branches} with initial function $(0,w^0_l)$. Clearly, $\underline{w}_L(t,x)>0$ for $t>0, x\geq 0$, and $\underline{w}_U(t,x)>0$ for $t>0, x\leq 0$. Moreover,
one can use the standard parabolic comparison  principle to conclude that
$\underline{w}_U(t,x)\geq w_l^0(x)$ for all $(t,x)\in[0,\infty)\times[-l,0]$. Hence, we have
 $$
 (\underline{w}_L(t,\cdot),\underline{w}_U(t,\cdot))\geq(0,w^0_l)\ \
 \mbox{in}\ \mathbb{R}_L\times\mathbb{R}_U,\ \ \forall t>0.
 $$
Thus for any $\delta>0$,
 $$
 (\underline{w}_L(\delta,\cdot),\underline{w}_U(\delta,\cdot))\geq(0,w^0_l)\ \
 \mbox{in}\ \mathbb{R}_L\times\mathbb{R}_U.
 $$
It follows from Lemma \ref{supsub-result} that
 $$
 (\underline{w}_L(t+\delta,\cdot),\underline{w}_U(t+\delta,\cdot))\geq(\underline{w}_L(t,\cdot),\underline{w}_U(t,\cdot))\ \
 \mbox{in}\ \mathbb{R}_L\times\mathbb{R}_U,\ \ \forall t,\delta>0,
 $$
which implies that $(\underline{w}_L,\underline{w}_U)$ is nondecreasing with respect to $t$.

Denote $(\underline{w}_{L,\infty}(x),\underline{w}_{U,\infty}(x))
=(\lim_{t\rightarrow\infty}\underline{w}_L(t,x),\lim_{t\rightarrow\infty}\underline{w}_U(t,x))$. Then,
similarly as in the proof of Theorem \ref{existence-1}, a compactness argument allows us to conclude that
 $$
 (\underline{w}_L(t,\cdot),\underline{w}_U(t,\cdot))\to(\underline{w}_{L,\infty}(\cdot),\underline{w}_{U,\infty}(\cdot))\ \
 \mbox{locally uniformly as }\ t\to\infty,
 $$
and $(\underline{w}_{L,\infty}(x),\underline{w}_{U,\infty}(x))$ is a positive  stationary solution
 to \eqref{two branches} satisfying $0<\underline{w}_{L,\infty}\leq 1,\; 0<\underline{w}_{U,\infty}\leq 1$.
 By (i) of Theorem \ref{prop:2-bra}, $(1,1)$ is the unique positive stationary solution of equation \eqref{two branches}  in case (i). Therefore we necessarily have
 \bes
 \label{a2}
 (\underline{w}_L(t,\cdot),\underline{w}_U(t,\cdot))\to(1,1)\ \
 \mbox{locally uniformly as}\ t\to\infty.
 \ees

On the other hand, as $w^0_l(x)<w_U(1,x)$ for all $x\in(-\infty,0)$, it follows from Lemma \ref{supsub-result} that
 $$
 (\underline{w}_L(t,\cdot),\underline{w}_U(t,\cdot))\leq(w_L(t+1,\cdot),w_U(t+1,\cdot))\ \
 \mbox{in}\ \mathbb{R}_L\times\mathbb{R}_U,\ \ \forall t\geq0.
 $$
This and \eqref{a2} yield
 \bes
 \label{a3}
 (\liminf_{t\rightarrow \infty}{w}_L(t,\cdot),\liminf_{t\rightarrow \infty}{w}_U(t,\cdot))\geq(1,1)\ \
 \mbox{locally uniformly as}\ t\to\infty.
 \ees
Combining \eqref{a1} and \eqref{a3}, we obtain the desired result, and the proof is complete.
\end{proof}

%
%

\subsection{Case (ii):\, $\beta_L,\beta_U\geq 2$}
By Theorem \ref{prop:2-bra}(iii),
 for any given $\alpha\in(0,1)$, \eqref{ss-3-up}
has a unique  solution $(\phi_L(\cdot,\alpha),\phi_U(\cdot,\alpha))$, and both
$\phi_L(x,\alpha)$ and $\phi_U(x,\alpha)$ are increasing in $x$.
We will use $(\phi_L,\phi_U)$ to construct a suitable supersolution to
establish the desired asymptotic behavior of $(w_L,w_U)$.

\begin{Thm}\label{(iii)} Assume that $\beta_U, \beta_L\geq 2$, and the initial datum $w_0\in C_{comp}(\mathbb{R})$
is nonnegative and $w_0\not\equiv 0$.  Let $(w_L,w_U)$ be the solution of \eqref{two branches}.
Then
 $$
 (w_L(t,\cdot),w_U(t,\cdot))\rightarrow(0,0)\ \ \ \mbox{locally uniformly as $t\rightarrow \infty$.}
  $$
Moreover, $\|w_L(t,\cdot)\|_{L^\infty(\mathbb{R}_L)}\rightarrow 1$ and $\|w_U(t,\cdot)\|_{L^\infty(\mathbb{R}_U)}\rightarrow 0$ as $t\rightarrow \infty$.
\end{Thm}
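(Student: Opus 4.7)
The plan is to establish the claim in three parts: the uniform upper bound $\limsup_{t\to\infty}\|w_i(t,\cdot)\|_\infty\leq 1$, the local convergence $(w_L,w_U)\to(0,0)$, and the quantitative assertions $\|w_L(t,\cdot)\|_\infty\to 1$ and $\|w_U(t,\cdot)\|_\infty\to 0$. For the upper bound I would mimic the opening of the proof of Theorem~\ref{(i)}: the spatially constant function $\bar u(t)=1+\|w_0\|_\infty e^{-t}$ satisfies $\bar u'=1-\bar u\geq \bar u(1-\bar u)$, trivially fulfills the continuity and Kirchhoff conditions at the junction, and dominates $w_0$. Lemma~\ref{supsub-result} then gives $w_L,w_U\leq \bar u$, so $\limsup_{t\to\infty}\|w_i(t,\cdot)\|_\infty\leq 1$.

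For the local decay I would combine parabolic regularity with rigidity of stationary solutions. Standard interior estimates yield uniform $C^2_{\mathrm{loc}}$ bounds on $(w_L,w_U)$ for $t\geq 1$, so any sequence $t_n\to\infty$ has a subsequence along which $(w_L(t_n,\cdot),w_U(t_n,\cdot))$ converges locally uniformly in $C^2$ to a bounded classical stationary solution $(\Phi_L,\Phi_U)$ of \eqref{ss-0}. By Theorem~\ref{prop:2-bra} this limit belongs to $\{(0,0),(1,1)\}\cup\{(\phi_L^\alpha,\phi_U^\alpha):\alpha\in(0,1)\}$. The family $\{\phi^\alpha\}$ is strictly monotone in $\alpha$ via the phase-plane construction underlying Theorem~\ref{prop:2-bra}, and $\phi^\alpha\downarrow 0$ locally uniformly as $\alpha\downarrow 0$. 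Each $(\phi_L^\alpha,\phi_U^\alpha)$ is itself a stationary solution and hence a stationary super-solution of \eqref{two branches}. The core strategy is to show that for every $\alpha\in(0,1)$ there exists $T_\alpha>0$ such that $(w_L(T_\alpha,\cdot),w_U(T_\alpha,\cdot))\leq(\phi_L^\alpha,\phi_U^\alpha)$ on the whole graph. Once this is done, Lemma~\ref{supsub-result} gives $(w_L,w_U)\leq(\phi_L^\alpha,\phi_U^\alpha)$ for all $t\geq T_\alpha$, and sending $\alpha\downarrow 0$ yields both the local decay to $0$ and $\|w_U(t,\cdot)\|_\infty\to 0$, since $\sup_{\mathbb R_U}\phi_U^\alpha=\phi_U^\alpha(0)=\alpha$.

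For the remaining assertion $\|w_L(t,\cdot)\|_\infty\to 1$, I would pass to the moving frame $v(t,y)=w_L(t,y+\beta_L t)$ on the expanding half-line $\{y>-\beta_L t\}$. A direct computation shows $v$ satisfies the drift-free KPP equation $v_t-v_{yy}=v(1-v)$ with nonnegative boundary datum $v(t,-\beta_L t)=w_L(t,0)$ on the receding left boundary. By the strong maximum principle, $w_L(1,\cdot)>0$ on $\mathbb R_L$, so $v(1,\cdot)>0$ on $(-\beta_L,\infty)$; choose a compactly supported positive function $v_0\leq v(1,\cdot)$ and let $\tilde v$ be the drift-free KPP solution on the moving half-line with Dirichlet-zero left boundary and initial datum $v_0$ at time $1$. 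Comparison gives $\tilde v\leq v$. Since $\beta_L\geq 2$, the left boundary recedes at least at the KPP spreading speed, so for large $t$ the solution $\tilde v$ becomes indistinguishable from the Cauchy problem on $\mathbb R$ with the same initial data; by the Aronson--Weinberger result (\cite{AW,HNRR}) that Cauchy solution has $L^\infty$-norm tending to $1$, whence $\|v(t,\cdot)\|_\infty\geq\|\tilde v(t,\cdot)\|_\infty\to 1$ and therefore $\|w_L(t,\cdot)\|_\infty\to 1$.

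The main obstacle is the dominance step $(w_L(T_\alpha,\cdot),w_U(T_\alpha,\cdot))\leq(\phi_L^\alpha,\phi_U^\alpha)$ in the local-decay argument. On $\mathbb R_U$ the parabolic solution has Gaussian tails at $-\infty$ inherited from compactly supported initial data, so is automatically dominated by the exponential tail \eqref{slow-u} of $\phi_U^\alpha$ for $|x|$ large; matching on a bounded window $[-M,0]$ requires that $w_U(T_\alpha,\cdot)$ is already small there, while on the lower branch $w_L(T_\alpha,\cdot)$ must be matched against a profile increasing from $\alpha$ at the junction to $1$ at $+\infty$. The genuinely hard point is ruling out the non-trivial stationary profiles $\phi^\alpha$ as $\omega$-limits; the underlying intuition is that $\beta_L,\beta_U\geq 2$ forces mass to be washed downstream faster than logistic reaction can rebuild a stationary profile near the junction, but a clean rigorous argument will likely require combining the phase-plane and exponential-tail estimates developed for Theorem~\ref{prop:2-bra} with a dedicated stability analysis of the $\phi^\alpha$ family under the Kirchhoff coupling.
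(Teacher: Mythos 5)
Your first and third parts are essentially the paper's own arguments: the constant supersolution $\bar u(t)=1+\|w_0\|_\infty e^{-t}$ gives $\limsup_{t\to\infty}\|w_i\|_\infty\le 1$, and the paper likewise proves $\|w_L(t,\cdot)\|_{L^\infty(\mathbb R_L)}\to 1$ by comparing with the solution $\underline u$ of the Dirichlet problem on $\mathbb R_L$ with $\underline u(t,0)=0$ and passing to the moving frame. The problem is the central step. Your dominance claim --- that for every $\alpha\in(0,1)$ there is $T_\alpha$ with $(w_L(T_\alpha,\cdot),w_U(T_\alpha,\cdot))\le(\phi_L^\alpha,\phi_U^\alpha)$ on the whole graph --- is circular: since $\phi_L^\alpha(0)=\phi_U^\alpha(0)=\alpha$, it requires $w(T_\alpha,0)\le\alpha$ at the junction, and for small $\alpha$ that is precisely the decay you are trying to prove. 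Nothing established up to that point gives it; the a priori bound only yields $w(t,0)\lesssim 1$. Likewise, the compactness route leaves you needing to exclude the nontrivial profiles $\phi^\alpha$ from the $\omega$-limit set, and you concede you do not have an argument for this. As written, the proof of the main assertion $(w_L,w_U)\to(0,0)$ is therefore incomplete.

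The paper resolves exactly this difficulty by not trying to dominate $w$ by $\phi^\alpha$ itself, but by the time-dependent supersolution $\bar w_i(t,x)=\phi_i(x;\alpha)+Me^{-\lambda t}e^{\frac{\beta_i}{2}x}$. At $t=0$ the correction equals $M$ at the junction, so it dominates the solution there without any smallness of $w$; it satisfies the Kirchhoff condition exactly because $a_L\beta_L=a_U\beta_U$; and it is a supersolution of the linearized equation precisely because $\beta_i\ge 2$ makes $\frac{\beta_i^2}{4}-1\ge 0$ (this is where the hypothesis enters, and where your ``washing out faster than the reaction rebuilds'' intuition is made quantitative). The comparison is then run on the truncated region $x\in[l,\xi(t)]$ with $\xi(t)=\frac{2\lambda}{\beta_L}t$, using the auxiliary barrier $v(x)=Me^{kx}$, $k=\frac12(\beta_U+\sqrt{\beta_U^2-4})$, together with the tail expansion \eqref{slow-u} of $\phi_U(\cdot;\alpha)$ to control $w_U$ for $x\le l$ and the value $M$ at $x=\xi(t)$ to control $w_L$ on the right. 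Letting $t\to\infty$ gives $\limsup_{t\to\infty}w_i\le\phi_i(\cdot;\alpha)$ for \emph{every} $\alpha\in(0,1)$, and only then is $\alpha\downarrow 0$ taken. If you want to salvage your outline, you should replace the dominance step by this kind of decaying-perturbation supersolution rather than attempt a stability analysis of the $\phi^\alpha$ family.
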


\begin{proof}
Fix $\alpha \in(0,1)$, we construct a supersolution in the following manner:
$$\begin{cases}
\bar{w}_L(t,x):=\phi_L(x;\alpha)+Me^{-\lambda t}e^{\frac{\beta_L}{2}x},& x\in\mathbb{R}_L,t\geq0,\\
\bar{w}_U(t,x):=\phi_U(x;\alpha)+Me^{-\lambda t}e^{\frac{\beta_U}{2}x},& x\in\mathbb{R}_U,t\geq0,\\
\end{cases}$$
where $M$ and $\lambda$ will be determined later, and the region for $x$ will also be suitably further restricted. Clearly,
 $$
 a_L\partial_x\bar{w}_L(t,0)=a_U\partial_x\bar{w}_U(t,0),\ \ \forall t>0.
 $$

 To check that $\bar{w}_L$ satisfies the supersolution conditions, we  calculate
 \begin{eqnarray*}
 &&\ \ \ \partial_t\bar{w}_L-\partial_{xx}\bar{w}_L+\beta_L\partial_x\bar{w}_L-\bar{w}_L+\bar{w}_L^2\\
 &&= -\phi''_L+\beta_L\phi'_L-\phi_L+\phi_L^2
 +\Big(2\phi_L-\lambda-1+\frac{\beta_L^2}{4}\Big)Me^{-\lambda t}e^{\frac{\beta_L}{2}x}
 +\Big(Me^{-\lambda t}e^{\frac{\beta_L}{2}x}\Big)^2\\
 &&\geq \Big[2\phi_L(0;\alpha)+\frac{\beta_L^2}{4}-1-\lambda\Big]Me^{-\lambda t}e^{\frac{\beta_L}{2}x} \;\mbox{ for } x\in (0,\infty),\; t>0.
 \end{eqnarray*}
 Here, we have used the monotonicity of $\phi_L(x;\alpha)$ in $x$. Similarly we have, for any $l>0$,
  \begin{eqnarray*}
 &&\ \ \ \partial_t\bar{w}_U-\partial_{xx}\bar{w}_U+\beta_U\partial_x\bar{w}_U-\bar{w}_U+\bar{w}_U^2\\
 &&\geq \Big(2\phi_U+\frac{\beta_U^2}{4}-1-\lambda\Big)Me^{-\lambda t}e^{\frac{\beta_U}{2}x}\\
 &&\geq \Big(2\phi_U(l;\alpha)+\frac{\beta_U^2}{4}-1-\lambda\Big)Me^{-\lambda t}e^{\frac{\beta_U}{2}x} \mbox{ for } x\in[-l, 0),\; t>0.
  \end{eqnarray*}

  Define
  \[ v(x):=M e^{kx} \mbox{ with } k=\frac 12 {\beta_U^2+\sqrt{\beta_U^2-4}}.
  \]
  Clearly
  \[
-v''+\beta_U v'=v\geq v-v^2.
\]
By Theorem  \ref{prop:2-bra}(iv), as $x\to-\infty$, $\phi_U(x;\alpha)$ satisfies \eqref{slow-u}.
It follows that there exists $l=l_\alpha>0$ such that
\[
\phi_U(x;\alpha)\geq v(x) \mbox{ for } x\leq - l.
\]

We now fix $l=l_\alpha$.
Because $\beta_U, \beta_L\geq 2$, $\phi_L(0;\alpha)=\alpha >0$ and $2\phi_U(-l;\alpha)>0$, by our earlier calculations, we can always find
a sufficiently small $\lambda>0$ (depending on $l$ but independent of $M$) such that
\[
 \partial_t\bar{w}_L-\partial_{xx}\bar{w}_L+\beta_L\partial_x\bar{w}_L-\bar{w}_L+\bar{w}_L^2\geq 0
\mbox{ for $x\in (0,+\infty)$, $t>0$,}
\]
 and
\[
\partial_t\bar{w}_U-\partial_{xx}\bar{w}_U+\beta_U\partial_x\bar{w}_U-\bar{w}_U+\bar{w}_U^2\geq 0 \mbox{ for } x\in [-l, 0), t>0.
\]

 Since $w_0\in C_{comp}(\mathbb{R})$, we can choose large $M>\max\{1, \|w_0\|_\infty\}$ such that $v(x)>w_0(x)$ in $\mathbb R$ and
  $$\bar{w}_i(0,x)>w_i(0,x)\ \ \mbox{  for  } x\in \mathbb{R}_i,\ \ i=L, U.$$
 Since $v(0)=M>\max\{1, \|w_0\|_\infty\}\geq w_U(t,0)$ for all $t\geq 0$, by the standard comparison principle we deduce
\[
w_U(t,x)\leq v(x) \mbox{ for } x\leq 0, t>0.
\]
It follows in particular that
\[
\bar w_U(t,-l)>\phi_U(-l;\alpha)\geq v(-l)\geq w_U(t,-l) \mbox{ for all } t\geq 0.
\]

Let $\displaystyle \xi(t):=\frac{2\lambda}{\beta_L}t$. Then
\[
\bar w_L(t, \xi(t))>M\geq w_L(t,\xi(t)) \mbox{ for all $t>0$}.
\]
Thus with $\lambda$ and $M$ fixed as above,  $(\bar{w}_L,\bar{w}_U)$ is a supersolution of \eqref{two branches} over the region $x\in[-l, \xi(t)]$ and $t\geq 0$. It follows from Remark 2.3   that
\[
w_L(t,x)\leq \bar w_L(t,x) \mbox{ for } x\in [0, \xi(t)], t>0;\; w_U(t,x)\leq \bar w_U(t,x) \mbox{ for } x\in [-l, 0],\; t>0.
\]
Since
\[
w_U(t,x)\leq v(x)\leq \phi_U(x;\alpha)<\bar w_U(t,x) \mbox{ for } x\leq -l,\; t>0,
\]
we thus have
\[
w_U(t,x)\leq \bar w_U(t,x) \mbox{ for } x\leq 0,\; t>0.
 \]
 Therefore
   $$
 \limsup_{t\rightarrow \infty}w_i(t,x)\leq \lim_{t\rightarrow\infty}\bar{w}_i(t,x)=\phi_i(x;\alpha)
 $$
locally uniformly for $x\in \mathbb{R}_i$, $i=L, U$. We further notice that $\lim_{\alpha\rightarrow0}\phi_i(x;\alpha)=0$  locally uniformly for $x\in\mathbb{R}_i$ and $i=L, U$. Then due to the arbitrariness of $\alpha$, we infer that
  $$
  \lim_{t\rightarrow \infty}w_i(t,x)=0\ \ \mbox{locally uniformly  for  } x\in \overline{\mathbb{R}}_i,\ \ i=L, U.
  $$

\medskip

Since $\bar w_U$ is increasing in $x$, it is easily seen from the above proof  that $\|w_U(t,\cdot)\|_{L^\infty(\mathbb R_U)}\to 0$ as $t\to+\infty$.
  To determine the limit of $\|w_L(t,\cdot)\|_{L^\infty(\mathbb{R}_L)}$, let us consider the auxiliary problem
 $$
 \begin{cases}
 \partial_t\underline{u}-\partial_{xx}\underline{u}+\beta_L\partial_x\underline{u}=\underline{u}-\underline{u}^2, & x\in\mathbb{R}_L,t>0,\\
 \underline{u}(t,0)=0, & t>0,\\
 \underline{u}(0,x)=\mbox{min}\{x,w_0(x)\}\geq,\not\equiv0, &x\in \mathbb{R}_L.
 \end{cases}$$
Obviously, $\underline{u}$ is a subsolution of the equation satisfied by $w_L$. Thus, $\underline{u}\leq w_L$ in $[0,\infty)\times\mathbb{R}_L$. From the equation satisfied by $\underline u(t, x+\beta_L t)$,  it is easily seen that $\|\underline{u}(t,\cdot)\|_{L^\infty(\mathbb{R}_L)}\rightarrow 1$ as $t\rightarrow \infty$, so in turn, $
\liminf_{t\to\infty}\|w_L(t,\cdot)\|_{L^\infty(\mathbb{R}_L)}\geq  1$. A simple comparison argument (involving an ODE solution) shows that $\limsup_{t\to\infty}\|w_L(t,\cdot)\|_{L^\infty(\mathbb{R}_L)}\leq  1$. We thus obtain $\lim_{t\to\infty}\|w_L(t,\cdot)\|_{L^\infty(\mathbb{R}_L)}=  1$.
\end{proof}

\subsection{Case (iii):\ $\beta_U\geq 2>\beta_L$} In this case, by Theorem \ref{prop:2-bra}(iii) there exists a constant $\alpha_0\in(0,1)$ such that equation \eqref{ss} has a unique solution  $(\phi_L(x;\alpha),\phi_U(x;\alpha))$
when $\alpha\in[\alpha_0,1)$ and has no solution when $\alpha\in(0,\alpha_0)$.

\begin{Thm}\label{(iv)}
Assume that $\beta_U\geq 2>\beta_L$ and the initial datum $w_0\in C_{comp}(\mathbb{R})$ is nonnegative and $w_0\neq 0$.  Let $(w_L,w_U)$ be the solution of \eqref{two branches}. Then we have
 \begin{equation}\label{UL}
 (w_L(t,\cdot),w_U(t,\cdot))\rightarrow (\phi_L(\cdot;\alpha_0),\phi_U(\cdot;\alpha_0))\ \ \mbox{locally uniformly as $t\rightarrow \infty$}.
 \end{equation}
\end{Thm}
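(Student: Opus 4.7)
The plan is to characterize the $\omega$-limit set of $(w_L(t,\cdot),w_U(t,\cdot))$ as $t\to\infty$ as the singleton $\{(\phi_L(\cdot;\alpha_0),\phi_U(\cdot;\alpha_0))\}$. The argument combines a uniform-in-time exponential upper bound on $w_U$ (which forces the correct decay rate at $-\infty$) with a persistence estimate from below (which rules out the trivial limit), and then invokes the decay-rate dichotomy of Theorem \ref{prop:2-bra}(iv) to single out $\alpha=\alpha_0$.

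First, setting $\lambda_+:=\frac{1}{2}(\beta_U+\sqrt{\beta_U^2-4})$ (with the convention $\lambda_+=1$ when $\beta_U=2$), I would establish the uniform bound
\[
w_L(t,x)\le M,\qquad w_U(t,x)\le Me^{\lambda_+ x},\qquad \forall\, t\ge 0,
\]
by comparison with the stationary supersolution $(\bar w_L,\bar w_U):=(M,Me^{\lambda_+ x})$ for $M\ge 1$ taken large. The identity $\lambda_+^2-\beta_U\lambda_++1=0$ gives $-\bar w_U''+\beta_U\bar w_U'-\bar w_U+\bar w_U^2=\bar w_U^2\ge 0$; the constant $\bar w_L\equiv M$ yields $-\bar w_L''+\beta_L\bar w_L'-\bar w_L+\bar w_L^2=M(M-1)\ge 0$; continuity at the junction is automatic, and the supersolution Kirchhoff condition $a_U\bar w_U'(0)-a_L\bar w_L'(0)=a_UM\lambda_+\ge 0$ holds. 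For $M$ large enough to dominate the compactly supported $w_0$, the two-branch analogue of Lemma \ref{supsub-result} yields the claimed bound.

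For the lower bound, because $\beta_L<2$, the gauge-transformed principal eigenvalue of $-\partial_{xx}+\beta_L\partial_x-1$ on a long interval $(a,L)\subset\mathbb{R}_L$ with Dirichlet endpoints equals $(\pi/(L-a))^2+\beta_L^2/4-1<0$ for $L-a$ large. The corresponding eigenfunction, extended by zero across the junction and along the truncated upper branch, serves as an admissible test function for the Rayleigh quotient associated with the full truncated two-branch network $(-l,0)\cup(0,L)$ (with Kirchhoff at $0$ and Dirichlet at $-l,L$), so the network principal eigenvalue is also negative. The rescaled positive eigenfunction $\varepsilon\varphi_1$ (with $\varepsilon>0$ small), extended by zero, is then a positive stationary subsolution of \eqref{two branches}; the parabolic strong maximum principle guarantees $(w_L(t_0,\cdot),w_U(t_0,\cdot))\ge\varepsilon\varphi_1$ for some $t_0>0$, and parabolic comparison propagates this lower bound to all $t\ge t_0$, ruling out a trivial limit.

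By parabolic Schauder estimates together with the uniform $L^\infty$ bound from Theorem \ref{existence-1}, $\{(w_L,w_U)(t,\cdot)\}_{t\ge 1}$ is relatively compact in $C^2_{loc}$, so any subsequential limit $(\Psi_L,\Psi_U)$ is a nontrivial bounded stationary solution of \eqref{ss-0} with $0<\Psi_i\le 1$ and $\Psi_L(0)=\Psi_U(0)=\alpha\in(0,1]$. By Theorem \ref{prop:2-bra}(iii) one has $\alpha\in[\alpha_0,1]$ and $(\Psi_L,\Psi_U)=(\phi_L(\cdot;\alpha),\phi_U(\cdot;\alpha))$, with $\alpha=1$ corresponding to $(\Psi_L,\Psi_U)\equiv(1,1)$. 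The first-step bound $\Psi_U(x)\le Me^{\lambda_+ x}$ is incompatible with the slow-decay asymptotic \eqref{slow-u} of $\phi_U(\cdot;\alpha)$ for every $\alpha\in(\alpha_0,1)$ and also with $\Psi_U\equiv 1$ at $\alpha=1$; only the fast-decay case $\alpha=\alpha_0$, where $\phi_U(\cdot;\alpha_0)\sim ce^{\lambda_+ x}$ via \eqref{fast-u}, remains. Hence the $\omega$-limit set reduces to $\{(\phi_L(\cdot;\alpha_0),\phi_U(\cdot;\alpha_0))\}$ and the desired locally uniform convergence \eqref{UL} follows. The main obstacle I expect is the eigenvalue analysis in the persistence step, since the separable eigenfunctions on the two intervals do not a priori match the Kirchhoff junction; this is handled by the gauge transformation $\phi=e^{\beta x/2}\psi$, which preserves the Kirchhoff condition thanks to the flow conservation $a_L\beta_L=a_U\beta_U$, combined with a localized test-function choice supported away from the junction.
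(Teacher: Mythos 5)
Your proposal assembles the right ingredients (the stationary supersolution $(M,Me^{\lambda_+x})$ giving the uniform bound $w_U\le Me^{\lambda_+x}$, a compactly supported positive subsolution in the lower branch built from the Dirichlet logistic problem since $\beta_L<2$, and the decay-rate dichotomy of Theorem \ref{prop:2-bra}(iv) to isolate $\alpha=\alpha_0$), and each of these steps individually is sound and appears in the paper's proof. However, there is a genuine gap at the pivotal step: you assert that relative compactness of $\{(w_L,w_U)(t,\cdot)\}_{t\ge1}$ in $C^2_{loc}$ implies that every subsequential limit is a \emph{stationary} solution of \eqref{ss-0}. Compactness only gives existence of limits; it does not make them equilibria. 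The $\omega$-limit set of a bounded orbit is invariant under the semiflow, but its elements need not be rest points unless you supply additional structure --- a Lyapunov functional with a LaSalle-type argument, or monotonicity of the orbit in $t$. Without one of these, your characterization of the $\omega$-limit set does not get off the ground, and the rest of the argument (which correctly pins down $\alpha=\alpha_0$ among stationary candidates) has nothing to apply to.

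The paper closes exactly this gap by a sandwich of monotone trajectories rather than by analyzing the orbit of $w$ directly: it solves \eqref{two branches} with initial data equal to the stationary supersolution $(Me^{kx},M)$ and, separately, with initial data equal to the stationary subsolution $(0,\phi_l)$; these two solutions are respectively nonincreasing and nondecreasing in $t$, hence converge to stationary solutions $(\hat\phi_U,\hat\phi_L)$ and $(\tilde\phi_U,\tilde\phi_L)$, and the comparison principle traps $(w_U,w_L)$ between them. The bound $\hat\phi_U(x)\le Me^{kx}$ together with \eqref{slow-u}/\eqref{fast-u} and the nonexistence for $\alpha<\alpha_0$ then forces $\hat\alpha=\tilde\alpha=\alpha_0$, so both bracketing trajectories converge to the same limit and \eqref{UL} follows. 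To repair your write-up with minimal change, redirect your upper and lower barriers into this monotone-iteration scheme (or alternatively introduce the weighted energy functional $V[v_L,v_U]=\int_0^{\infty}a_Le^{-\beta_Lx}[\tfrac12(\partial_xv_L)^2-F(v_L)]\,dx+\int_{-\infty}^{0}a_Ue^{-\beta_Ux}[\tfrac12(\partial_xv_U)^2-F(v_U)]\,dx$ and prove $\dot V\le0$, which is a substantially longer route requiring additional time-H\"older estimates to upgrade subsequential to full convergence).
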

\begin{proof}
For $M>1$, set
\[
k:=\frac{\beta_U+\sqrt{\beta_U^2-4}}{2}, \;\; \bar \phi_{U, 0}(x):=Me^{kx},\;\; \bar \phi_{L, 0}(x):=M.
\]
Since $w_0$ has compact support, we can fix $M>1$ large enough such that
\[
\bar \phi_{U, 0}(x)=Me^{kx}>w_0(x) \mbox{ for } x\leq 0,\; \bar \phi_{L, 0}(x)=M>w_0(x) \mbox{ for } x\geq 0.
\]
Clearly
\[
-\bar \phi_{U,0}''+\beta_U\bar\phi'_{U,0}=\bar \phi_{U,0}>\bar \phi_{U, 0}-\bar \phi_{U, 0}^2 \mbox{ for } x<0,
\]
\[
-\bar \phi_{L,0}''+\beta_L\bar\phi'_{L,0}=0> M(1-M)=\bar \phi_{L, 0}-\bar \phi_{L, 0}^2 \mbox{ for } x>0.
\]
Moreover,
\[
a_U\bar \phi_{U,0}'(0)-a_L\bar \phi_{L,0}'(0)=Ma_U k>0.
\]
Therefore $(\bar \phi_{U,0}, \bar \phi_{L,0})$ is a super solution of the corresponding elliptic problem of  \eqref{two branches}.
It follows that the unique solution $(\bar \phi_{U}(t,x), \bar \phi_{L}(t,x))$ of  \eqref{two branches} with initial function
$(\bar \phi_{U,0}, \bar \phi_{L,0})$ is nonincreasing in $t$, and as $t\to+\infty$,
\[
(\bar \phi_{U}(t,\cdot), \bar \phi_{L}(t,\cdot))\to (\hat\phi_U(\cdot),\hat \phi_L(\cdot))
\]
in $C^2_{loc}(\overline{\mathbb R}_U)\times C^2_{loc}(\overline{\mathbb R}_L)$, and $(\hat\phi_U, \hat\phi_L)$ is a nonnegative stationary solution of  \eqref{two branches}.
Clearly
\[
\hat\phi_U(x)\leq \bar\phi_{U, 0}(x)=Me^{kx} \mbox{ for } x\leq 0,\; \hat \phi_L(x)\leq  \bar\phi_{U, 0}(x)=M \mbox{ for } x\geq 0.
\]
By a simple comparison consideration involving an ODE, we also easily see that
\[
0\leq \hat\phi_U\leq 1,\; 0\leq \hat \phi_L\leq 1.
\]

Since $\beta_L<2$, there exists a unique $l_0>0$ such that  the problem
\[
-\phi''+\beta_L\phi'=\phi-\phi^2 \mbox{ in } (0, l),\; \phi(0)=\phi(l)=0
\]
has a unique positive solution $\phi_l(x)$ if and only if $l>l_0$, and $\|\phi_l\|_\infty\to 0$ as $l\to l_0$.
Therefore we can fix $l>l_0$ such that $\phi_l(x)<\min\{1, \phi_L(1,x)\}$ for $x\in [0, l]$.
Define
\[
\underline \phi_{U, 0}(x)\equiv 0 \mbox{ for } x\leq 0,\; \underline \phi_{L, 0}(x)=\phi_l(x) \mbox{ for } x\in [0, l],\;
\underline \phi_{L, 0}(x)=0 \mbox{ for } x\geq l.
\]
Then it is easily checked that $(\underline \phi_{U, 0}, \underline \phi_{L, 0})$ is a subsolution of the corresponding elliptic problem of \eqref{two branches}. It follows that the unique solution $(\underline \phi_{U}(t,x), \underline \phi_{L}(t,x))$ of  \eqref{two branches} with initial function
$(\underline \phi_{U,0}, \underline \phi_{L,0})$ is nondecreasing in $t$, and as $t\to+\infty$,
\[
(\underline \phi_{U}(t,\cdot), \underline \phi_{L}(t,\cdot))\to (\tilde \phi_U(\cdot), \tilde \phi_L(\cdot))
\]
in $C^2_{loc}(\overline{\mathbb R}_U)\times C^2_{loc}(\overline{\mathbb R}_L)$, and $(\tilde \phi_U, \tilde \phi_L)$ is a positive stationary solution of  \eqref{two branches}.

Since
\[
\underline \phi_{U,0}< \bar \phi_{U,0},\; \underline \phi_{L,0}<\bar \phi_{L,0},
\]
we have
\[
0<\tilde \phi_U(x)\leq \hat\phi_U(x)\leq M e^{kx} \mbox{ for } x\leq 0, \;\;0<\tilde \phi_L(y)\leq \hat \phi_L(y) \mbox{ for } y\geq 0.
\]
It follows that  $(\hat\phi_U(x), \hat\phi_L(y))$ and $(\tilde \phi_U(x), \tilde \phi_L(y))$ must be solutions of \eqref{ss} with some $\alpha=\hat \alpha, \tilde \alpha\in (0, 1)$, respectively, and $\hat \alpha\geq \tilde \alpha$.

We claim that $\hat \alpha=\tilde\alpha=\alpha_0$, with $\alpha_0\in (0, 1)$ given by Theorem \ref{prop:2-bra}. Indeed, by part (iv) of this theorem, if $\hat \alpha>\alpha_0$,
then there exists some $\hat c>0$ such that as $x\to-\infty$,
\[
\hat \phi_U(x)=\left\{\begin{array}{ll} (\hat c+o(1))|x| e^{x} &\mbox{  if  } \beta_U=2,\\
(\hat c+o(1))e^{\frac 12 (\beta_U-\sqrt{\beta_U^2-4}\,)x} & \mbox{  if  } \beta_U>2,
\end{array}\right.
\]
which is a contradiction to $\hat \phi_U(x)\leq M e^{\frac 12 (\beta_U+\sqrt{\beta_U^2-4}\,)x}$ for all $x\leq 0$. Therefore $\hat \alpha\leq \alpha_0$. Since \eqref{ss} has no solution for $\alpha\in (0, \alpha_0)$, we
necessarily have $\hat \alpha\geq \tilde\alpha\geq \alpha_0$. Hence $\hat \alpha=\tilde\alpha=\alpha_0$ and
 $(\hat \phi_U, \hat \phi_L)=(\tilde \phi_U, \tilde \phi_L)$ coincides with the unique solution of \eqref{ss} with $\alpha=\alpha_0$, i.e., $(\phi_U(\cdot; \alpha_0), \phi_L(\cdot;\alpha_0))$.

Using
\[
\bar \phi_U(0, x)> w_0(x) \mbox{ for } x\leq 0,\; \bar \phi_L(0, x)> w_0(x) \mbox{ for } x\geq 0,
\]
we also obtain
\[
\bar \phi_U(t,x)\geq w_U(t,x) \mbox{ for } x\leq 0,\; \bar \phi_L(t, x)\geq w_L(t,x) \mbox{ for } x\geq 0.
\]
It follows that
\[
\limsup_{t\to+\infty} w_U(t,x)\leq \phi_U(x;\alpha_0) \mbox{ locally uniformly for } x\leq 0,
\]
\[
\limsup_{t\to+\infty} w_L(t,x)\leq \phi_L(x;\alpha_0) \mbox{ locally uniformly for } x\geq 0.
\]

Similarly, using
\[
\underline \phi_U(0,x)=0< w_U(1,x) \mbox{ for } x\leq 0,\; \underline \phi_L(0, x)< w_L(1,x) \mbox{ for } x\geq 0
\]
we obtain
\[
\underline \phi_U(t,x)\leq w_U(t+1,x) \mbox{ for } x\leq 0,\; \underline \phi_L(t, x)\leq w_L(t+1,x) \mbox{ for } x\geq 0,
\]
and hence
\[
\liminf_{t\to+\infty} w_U(t,x)\geq \phi_U(x;\alpha_0) \mbox{ locally uniformly for } x\leq 0,
\]
\[
\liminf_{t\to+\infty} w_L(t,x)\geq \phi_L(x;\alpha_0) \mbox{ locally uniformly for } x\geq 0.
\]
Therefore \eqref{UL} holds.
\end{proof}

\vskip8pt
Clearly Theorem \ref{main:two} is a consequence of the above theorems in this section.

\section{The three-branches problems}

In this section, we prove
 Theorems \ref{main} and \ref{main2} based on Lemmas \ref{I}, \ref{II}, \ref{III}, \ref{IV} and \ref{prop:1-2-weak} proved in section 5 by a phase plane approach. These latter results form a weaker version of Theorems \ref{uul} and \ref{ull}, and we will  use a new technique here to improve these results from section 5 to complete the proof of Theorems \ref{uul} and \ref{ull}; see Lemma \ref{*=*} and Remark \ref{uniq} below.

\subsection{Proof of Theorem \ref{main}}\ This theorem has four conclusions,  corresponding to  the following  four cases:
\[
\mbox{ {\bf (i):}\ $\beta_{U_1}, \beta_{U_2}<2$,\ \ {\bf (ii):}\ $\beta_L, \beta_{U_1},\beta_{U_2}\geq 2$, \;  {\bf (iii):}\ $\beta_{U_1}, \beta_{U_2}\geq 2>\beta_L$,}
\] \vspace{-0.6cm}
\[\mbox{

{\bf (iv):}\ $\max\{\beta_{U_1}, \beta_{U_2}\}\geq 2>\min\{\beta_{U_1}, \beta_{U_2}\}$.}
\]
Clearly these exhaust all the possible cases of the positive parameters $\beta_{U_1}, \beta_{U_2}$ and $\beta_L$.

\subsubsection{Proof of {\bf (i)}}
We borrow  the ideas in the proof  of Theorem \ref{(i)}.
Since $\beta_{U_1},\beta_{U_2}<2$, for $i=1,2$, we can similarly find $l_{i, 0}>0$ such that \eqref{wl} has a positive solution if and only if $l>l_{i,0}$. Then fix $l_i>l_{i,0}$
but close to $l_{i,0}$ so that
\[
w_{l_i}<w_{U_i}(1,x) \mbox{ for } x\in [-l_i, 0].
\]
Then define, for $i=1,2$,
\[
w_{l_i}^0=w_{l_i} \mbox{ on } [-l_i, 0],\; w_{l_i}^0=0 \mbox{ on } (-\infty, -l_i).
\]
Let $(\underline w_{U_1}, \underline w_{U_2}, \underline w_{L})$ be the solution of \eqref{main equation} with initial function $(w_{l_1}^0, w_{l_2}^0, 0)$.
Then a similar comparison argument shows that $(\underline w_{U_1}, \underline w_{U_2}, \underline w_{L})$ is nondecreasing in $t$, and as $t\to+\infty$,
\[
(\underline w_{U_1}(t,\cdot), \underline w_{U_2}(t,\cdot), \underline w_{L}(t,\cdot))\to (\underline w_{U_1, \infty}(\cdot), \underline w_{U_2,\infty}(\cdot), \underline w_{L,\infty}(\cdot))
\]
in $L^\infty_{loc}(\mathbb R_{U_1})\times L^\infty_{loc}(\mathbb R_{U_2})\times L^\infty_{loc}(\mathbb R_{L})$. Moreover,
$(\underline w_{U_1, \infty}, \underline w_{U_2,\infty}, \underline w_{L,\infty})$ is a positive stationary solution of \eqref{main equation} satisfying
\[
0<\underline w_{L, \infty}\leq 1, \; 0<\underline w_{U_i,\infty}\leq 1,\; i=1,2.
\]
By Theorem \ref{uul} (I), we easily see that $(\underline w_{U_1, \infty}, \underline w_{U_2,\infty}, \underline w_{L,\infty})\equiv (1,1,1)$.

By the choice of the initial function of $(\underline w_{U_1}, \underline w_{U_2}, \underline w_{L})$ and the comparison principle, we have
\[
(\underline w_{U_1}(t,\cdot), \underline w_{U_2}(t,\cdot), \underline w_{L}(t,\cdot))\leq ( w_{U_1}(t+1,\cdot),  w_{U_2}(t+1,\cdot),  w_{L}(t+1,\cdot)) \mbox{ for } t\geq 0.
\]
It follows that
\[
\liminf_{t\to+\infty}( w_{U_1}(t,\cdot),  w_{U_2}(t,\cdot),  w_{L}(t,\cdot))\geq (1,1,1)\; \mbox{ locally uniformly}.
\]

If we define $\bar u$ as in the proof of Theorem \ref{(i)} then $(\bar u, \bar u, \bar u)$ is a super solution of \eqref{main equation}, from which we deduce
\[
\limsup_{t\to+\infty}( w_{U_1}(t,\cdot),  w_{U_2}(t,\cdot),  w_{L}(t,\cdot))\leq (1,1,1)\; \mbox{ locally uniformly}.
\]
We thus obtain
\[
\lim_{t\to+\infty}( w_{U_1}(t,\cdot),  w_{U_2}(t,\cdot),  w_{L}(t,\cdot))= (1,1,1) \; \mbox{ locally uniformly}.
\]
This completes the proof of conclusion (i). \hfill $\Box$

\subsubsection{Proof of {\bf (ii)}}
 We use the ideas  in the proof of Theorem \ref{(iii)}. By Lemma  \ref{II}, there exists $\epsilon>0$ sufficiently small such that
 for each $\alpha\in (0, \epsilon)$, \eqref{ss-3-up} has a solution $(\phi_{U_1}(x;\alpha), \phi_{U_2}(x;\alpha), \phi_{L}(x;\alpha))$ satisfying
 \eqref{slow}.

   We define
$$\begin{cases}
\bar{w}_L(t,x):=\phi_L(x;\alpha)+Me^{-\lambda t}e^{\frac{\beta_L}{2}x},& x\in\mathbb{R}_L,t\geq0,\\
\bar{w}_{U_i}(t,x):=\phi_{U_i}(x;\alpha)+Me^{-\lambda t}e^{\frac{\beta_{U_i}}{2}x},& x\in\mathbb{R}_{U_i},t\geq0,\; i=1,2,\\
\end{cases}$$
with $M$ and $\lambda$ positive constants to be determined later.
 It is easily checked that
 $$
 a_L\partial_x\bar{w}_L(t,0)=a_{U_1}\partial_x\bar{w}_{U_1}(t,0)+a_{U_2}\partial_x\bar{w}_{U_2}(t,0),\ \ \forall t>0.
 $$
   For $i=1,2$, define
  \[ v_i(x):=M e^{k_ix} \mbox{ with } k_i=\frac 12 {\beta_{U_i}^2+\sqrt{\beta_{U_i}^2-4}}.
  \]
  Clearly
  \[
-v_i''+\beta_{U_i} v_i'=v_i\geq v_i-v_i^2.
\]
It follows from \eqref{slow} that there exists $l_i=l_i(\alpha)>0$ such that
\[
\phi_{U_i}(x;\alpha)\geq v_i(x) \mbox{ for } x\leq -l_i.
\]

By the calculations in the proof of Theorem \ref{(iii)}, we can see that there exists $\lambda>0$ small, independent of $M$, such that for such $\lambda$ and $i\in\{1, 2\}$,
\[
 \partial_t\bar{w}_{U_i}-\partial_{xx}\bar{w}_{U_i}+\beta_{U_i}\partial_x\bar{w}_{U_i}-\bar{w}_{U_i}+\bar{w}_{U_i}^2\geq 0 \mbox{ for } x\in [-l_i, 0),\; t>0,
\]
and
 \[
 \partial_t\bar{w}_L-\partial_{xx}\bar{w}_L+\beta_L\partial_x\bar{w}_L-\bar{w}_L+\bar{w}_L^2\geq 0 \mbox{ for } x>0, t>0.
 \]

 We now choose $M>0$ large enough such that $M>\max\{1, \|w_{i,0}\|_\infty, i=U_1, U_2, L\}$ , $v_i(x)>w_{U_i,0}(x)$ in $\mathbb R_{U_i}$ for $i=1,2$, and
  $$\bar{w}_j(0,x)>w_j(0,x)\ \ \mbox{  for  } x\in \mathbb{R}_j,\ \ j=L, U_1, U_2.$$
 Then
 \[
 \mbox{ $v_i(0)=M>\max\{1, \|w_{j,0}\|_\infty, j=U_1, U_2, L\}\geq \max\{\|w_i\|_\infty: i=L, U_1, U_2\}\geq w_{U_i}(t,0)$ for all $t\geq 0$,}
 \]
 and by the standard comparison principle we deduce
\[
w_{U_i}(t,x)\leq v_i(x) \mbox{ for } x\leq 0, t>0,\; i=1,2.
\]
It follows in particular that
\[
\bar w_{U_i}(t,-l_i)>\phi_{U_i}(-l_i;\alpha)\geq v_i(-l_i)\geq w_{U_i}(t,-l_i) \mbox{ for all } t\geq 0, \; i=1,2.
\]
Let $\displaystyle \xi(t):=\frac{2\lambda}{\beta_L}t$. Then
\[
\overline w_L(t,\xi(t))>M\geq w_{L}(t,\xi(t)) \mbox{ for } t>0,
\]
and so
  $(\bar{w}_L(t,x),\bar{w}_{U_1}(t,y), \bar w_{U_2}(t,z))$ is a supersolution of \eqref{main equation} over the region $
(x,y,z)\in [0,\xi(t)] \times [-l_1, 0]\times [-l_2, 0]$ and $t\geq 0$. It follows that
\[
w_L(t,x)\leq \bar w_L(t,x) \mbox{ for } x\in [0,\xi(t)], t>0;\; w_{U_i}(t,x)\leq \bar w_{U_i}(t,x) \mbox{ for } x\in [-l_i, 0],\; t>0,\; i=1,2.
\]
Since
\[
w_{U_i}(t,x)\leq v_i(x)\leq \phi_{U_i}(x;\alpha)<\bar w_{U_i}(t,x) \mbox{ for } x\leq -l_i,\; t>0,,\; i=1,2,
\]
we thus have
\[
w_{U_i}(t,x)\leq \bar w_{U_i}(t,x) \mbox{ for } x\leq 0,\; t>0,\; i=1,2.
 \]
 Therefore
   $$
 \limsup_{t\rightarrow \infty}w_i(t,x)\leq \lim_{t\rightarrow\infty}\bar{w}_i(t,x)=\phi_i(x;\alpha)
 $$
locally uniformly for $x\in \overline{\mathbb{R}}_i$, $i=L, U_1, U_2$. We further notice that $\lim_{\alpha\rightarrow0}\phi_i(x;\alpha)=0$  locally uniformly for $x\in\mathbb{R}_i$ and $i=L, U_1, U_2$. Then due to the arbitrariness of $\alpha$, we infer that
  $$
  \lim_{t\rightarrow \infty}w_i(t,x)=0\ \ \mbox{locally uniformly  for  } x\in \overline{\mathbb{R}}_i,\ \ i=L, U_1, U_2.
  $$

The conclusions on the large time behavior of $\|w_j(t,\cdot)\|_{L^\infty(\mathbb R_j)}$, $j\in\{U_1, U_2, L\}$, can be proved in the same way as in the proof of Theorem \ref{(iii)}.
This completes the proof for {\bf (ii)}. \hfill $\Box$

\medskip

\subsubsection{Proof of {\bf (iii)}} We first improve the conclusion in Lemma \ref{III} by showing that $\alpha_1^*=\alpha_2^*$.
\begin{lem}\label{*=*}
In Lemma \ref{III}, $\alpha_1^*=\alpha_2^*$.
\end{lem}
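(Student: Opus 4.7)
The plan is to adapt the super- and sub-solution argument from the proof of Theorem \ref{(iv)} to the three-branch system. For $i=1,2$, set $k_i := \frac{1}{2}(\beta_{U_i}+\sqrt{\beta_{U_i}^2-4}\,)$, the fast exponential decay rate at $-\infty$. First I would construct a super-solution of the stationary version of \eqref{main equation} whose profile already carries exactly this decay: take $\bar\phi_{U_i,0}(x)=Me^{k_ix}$ on $\mathbb{R}_{U_i}$ and $\bar\phi_{L,0}(x)\equiv M$ on $\mathbb{R}_L$ with $M$ large. A direct computation shows $-\bar\phi''_{U_i,0}+\beta_{U_i}\bar\phi'_{U_i,0}=\bar\phi_{U_i,0}\ge \bar\phi_{U_i,0}-\bar\phi_{U_i,0}^2$, the continuity condition at $x=0$ is satisfied, and the Kirchhoff inequality
\[
a_{U_1}\bar\phi'_{U_1,0}(0)+a_{U_2}\bar\phi'_{U_2,0}(0)-a_L\bar\phi'_{L,0}(0)=M(a_{U_1}k_1+a_{U_2}k_2)>0
\]
holds in the super-solution sense. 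Hence the parabolic flow starting from this datum is nonincreasing in $t$ and converges, locally in $C^2$, to a stationary solution $(\hat\phi_{U_1},\hat\phi_{U_2},\hat\phi_L)$ of \eqref{main equation} satisfying the pointwise bound $\hat\phi_{U_i}(x)\le Me^{k_ix}$ for all $x\le 0$.

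Next I would construct a positive sub-solution supported on a finite interval in $\mathbb{R}_L$, using the Dirichlet logistic profile as in the proof of Theorem \ref{(iv)}, and identically zero elsewhere. Its parabolic flow is nondecreasing in $t$ and converges to a positive stationary solution $(\tilde\phi_{U_1},\tilde\phi_{U_2},\tilde\phi_L)$ of \eqref{main equation}. Comparison between the two flows gives $\tilde\phi_i\le\hat\phi_i$ in each branch, so both limits solve \eqref{ss-3-up} at some common-channel values $\tilde\alpha\le\hat\alpha$ in $(0,1)$.

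The crucial step is the exponential bound $\hat\phi_{U_i}(x)\le Me^{k_ix}$: the slow decay rate $\tfrac{1}{2}(\beta_{U_i}-\sqrt{\beta_{U_i}^2-4}\,)$ is strictly smaller than $k_i$ (with the modified $|x|e^x$ form when $\beta_{U_i}=2$), so this bound rules out the slow decay expansion \eqref{slow} in each branch and forces $\hat\phi_{U_i}$ to satisfy the fast decay expansion \eqref{fast}. By the phase-plane characterization in Theorem \ref{III}, fast decay in branch $U_i$ can occur only at $\alpha=\alpha_i^*$, so one obtains $\hat\alpha=\alpha_1^*=\alpha_2^*$. The non-existence part of Theorem \ref{III} below $\max\{\alpha_1^*,\alpha_2^*\}$ further forces $\tilde\alpha\ge\max\{\alpha_1^*,\alpha_2^*\}$, and combined with $\tilde\alpha\le\hat\alpha$ this pins down both $\tilde\alpha$ and $\hat\alpha$ at the same common critical value, finishing the proof.

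The main obstacle is showing that a single value of $\alpha$ can simultaneously realize the fast decay in both upper branches: phase-plane analysis treats each branch independently and produces $\alpha_1^*$ and $\alpha_2^*$ as a priori distinct quantities. The global coupling supplied by the parabolic flow, together with the Kirchhoff condition at the junction and the one-sided exponential bound transported by the super-solution, is precisely what enforces simultaneity. A secondary technical point is verifying that the Dirichlet-based sub-solution indeed produces a positive stationary limit rather than being pulled down to the trivial zero; this uses $\beta_L<2$ and standard logistic theory on a sufficiently large interval, exactly as in the proof of Theorem \ref{(iv)}.
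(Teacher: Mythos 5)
Your proposal does not prove the lemma; it has a genuine logical gap at the final step. The super-solution $(Me^{k_1x},Me^{k_2x},M)$ and its decreasing parabolic flow do produce a stationary limit $(\hat\phi_{U_1},\hat\phi_{U_2},\hat\phi_L)$ with $\hat\phi_{U_i}(x)\le Me^{k_ix}$, and this bound does force the fast expansion \eqref{fast} in \emph{both} upper branches. But the conclusion you then draw --- ``fast decay in branch $U_i$ can occur only at $\alpha=\alpha_i^*$'' --- misreads the meaning of the subscripts: $\alpha_1^*$ and $\alpha_2^*$ in Theorem \ref{III} are not branch-indexed quantities; they are the smallest and largest roots of $\Psi_*(\alpha)=\Psi_3(\alpha)$, where $\Psi_*=\xi_1\Psi_1^*+\xi_2\Psi_2^*$. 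What the two-sided fast decay of $\hat\phi$ actually gives is only $\Psi_*(\hat\alpha)=\Psi_3(\hat\alpha)$, i.e.\ $\hat\alpha$ lies in the set of crossing points, which a priori contains $\alpha_1^*$, $\alpha_2^*$ and possibly further points in between. Likewise, the non-existence part of Theorem \ref{III} holds only for $\alpha<\alpha_1^*=\min\{\alpha_1^*,\alpha_2^*\}$, not below the maximum, so the sub-solution flow only yields $\tilde\alpha\ge\alpha_1^*$. Combining $\alpha_1^*\le\tilde\alpha\le\hat\alpha\le\alpha_2^*$ gives no contradiction with $\alpha_1^*<\alpha_2^*$. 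In effect your argument identifies the maximal and minimal stationary solutions trapped between the barriers but never shows they coincide --- and that coincidence \emph{is} the content of the lemma. Indeed, the paper runs exactly your super-/sub-solution construction in the proof of Theorem \ref{main}(iii), but only \emph{after} Lemma \ref{*=*} is available, because identifying both limits with the solution at a single $\alpha^*$ requires the lemma; using the flow to prove the lemma reverses the logical order.

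The paper closes the gap by a completely different mechanism: a sliding (multiplicative comparison) argument between the two critical solutions themselves. Assuming $\alpha_1^*<\alpha_2^*$, both solutions $(\phi_1,\phi_2,\phi_3)$ at $\alpha_1^*$ and $(\psi_1,\psi_2,\psi_3)$ at $\alpha_2^*$ satisfy the \emph{same} fast expansion $(c_i+o(1))e^{k_ix}$, so $M_*:=\inf\{M>0: M(\phi_1,\phi_2,\phi_3)\ge(\psi_1,\psi_2,\psi_3)\}$ is finite and $>1$; since $M_*(\phi_1,\phi_2,\phi_3)$ is a strict supersolution, a refined ODE analysis of the difference $\theta_i=M_*\phi_i-\psi_i$ near $x=-\infty$ (showing $\theta_i$ decays no faster than $e^{k_ix}$, hence $\theta_i\ge\epsilon\psi_i$ globally) allows one to replace $M_*$ by $M_*-\epsilon$, contradicting minimality. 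Some argument of this direct-comparison type, which plays the two putative critical solutions against each other rather than against a barrier, is what your proposal is missing.
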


\begin{proof}
Arguing indirectly we assume that $\alpha_1^*<\alpha_2^*$, and denote the unique solution of \eqref{ss-3-up} with $\alpha=\alpha_1^*$ and $\alpha=\alpha_2^*$  by $(\phi_1,\phi_2,\phi_3)$ and $(\psi_1,\psi_2,\psi_3)$, respectively. By \eqref{fast}, there exists positive constants
$c_1, c_2, \hat c_1,\hat c_2$ such that, as $x\to-\infty$,
\begin{equation}\label{expansion-infty}
\phi_i(x)=(c_i+o(1)) e^{k_i x},\; \psi_i(x)=(\hat c_i+o(1)) e^{k_i x},\;\; i=1,2,
\end{equation}
where $k_i=\frac{\beta_{U_i}+\sqrt{\beta_{U_i}^2-4}}{2}$ for $i=1,2$.
Recall that we also have
\[
\phi_3(+\infty)=\psi_3(+\infty)=1.
\]
These facts imply the existence of a positive constant $M>1$ such that
\[
M(\phi_1,\phi_2,\phi_3)\geq (\psi_1,\psi_2,\psi_3),
\]
by which we mean
\[
M\phi_i(x)\geq \psi_i(x) \mbox{ for } x\leq 0,\;i=1,2,\;\;  M\phi_3(x)\geq \psi_3(x) \mbox{ for } x\geq 0.
\]
It follows that
\[
M\alpha_1^*\geq \alpha_2^* \mbox{ and hence } M\geq \frac{\alpha_2^*}{\alpha_1^*}>1.
\]
Define
\[
M_*:=\inf\{M>0: M(\phi_1,\phi_2,\phi_3)\geq (\psi_1,\psi_2,\psi_3)\}.
\]
Then
\[
M_*(\phi_1,\phi_2,\phi_3)\geq (\psi_1,\psi_2,\psi_3),\; M_*\geq \frac{\alpha_2^*}{\alpha_1^*}>1.
\]
Moreover, it is easily checked that $M_*(\phi_1,\phi_2,\phi_3)$ is a super solution to \eqref{ss-3-up} with $\alpha=\alpha^*_2$.
Since $M_*\phi_3(+\infty)-\psi_3(+\infty)=M_*-1>0$, by the comparison principle we have
\[
M_*\phi_i(x)> \psi_i(x) \mbox{ for } x\leq 0,\;i=1,2,\;\;  M_*\phi_3(x)> \psi_3(x) \mbox{ for } x\geq 0.
\]
Set
\[
\theta_i(x):=M_*\phi_i(x)-\psi_i(x),\; i=1,2,3.
\]
A simple calculation yields
\[
-\theta_i''+\beta_{U_i}\theta_i'=(1-M_*\phi_i-\psi_i)\theta_i+(M_*^2-M_*)\phi_i^2\geq (1-M_*\phi_i-\psi_i)\theta_i,\; i=1,2.
\]
Denote
\[
\epsilon_i(x)=M_*\phi_i(x)+\psi_i(x),\; i=1,2,
\]
and let $\bar \theta_i(t,x)$ be the unique solution of
\[
\begin{cases}
\partial_t\bar \theta_i-\partial_{xx}\bar\theta_i+\beta_{U_i}\partial_x\bar\theta_i=(1-\epsilon_i(x))\bar\theta_i,& x<0,\; t>0,\\
\bar\theta_i(t,0)=\theta_i(0),\bar\theta_i(t,-\infty)=0, & t>0,\\
\bar\theta_i(0,x)=\theta_i(x), & x\leq 0.
\end{cases}
\]
Then $\bar\theta_i(t,x)$ is nonincreasing in $t$ and as $t\to+\infty$, $\bar\theta_i(t,x)\to \hat\theta_i(x)$ and $\hat\theta_i(x)$ satisfies
\[
\begin{cases}
-\hat\theta_i''+\beta_{U_i}\hat\theta_i'=(1-\epsilon_i(x))\hat\theta_i,& x<0,\\
\hat\theta_i(0)=\theta_i(0), \hat\theta_i(-\infty)=0,&\\
0\leq \hat\theta_i(x)\leq \theta_i(x), & x\leq 0.
\end{cases}
\]
Since $\theta_i(0)>0$, by the maximum principle we have $\hat\theta_i(x)>0$ for $x\leq 0$.
By \eqref{expansion-infty} we have, as $x\to-\infty$,
\begin{equation}\label{hat-theta}
0<\hat\theta_i(x)\leq \theta_i(x)=(M_*c_i-\hat c_i+o(1))e^{k_i x},\; \epsilon_i(x)=(M_*c_i+\hat c_i+o(1))e^{k_i x}.
\end{equation}
By standard ODE theory, there exist fundamental solutions $\Theta_i^1(x)$ and $\Theta_i^2(x)$ of the linear ODE
\[
-\theta''+\beta_{U_i}\theta'=(1-\epsilon_i(x))\theta
\]
such that, as $x\to-\infty$,
\[
\Theta_i^1(x)=(1+o(1))e^{k_i x},\; \Theta_i^2(x)=\left\{\begin{array}{ll}(1+o(1))e^{k^-_ix} &\mbox{ if } \beta_{U_i}>2,\\
(1+o(1))|x|e^{ x} & \mbox{ if } \beta_{U_i}=2,
\end{array}\right.
\]
where
\[
k_i^-:=\frac{\beta_{U_i}-\sqrt{\beta_{U_i}^2-4}}{2}, \; i=1,2.
\]
It follows that
\[
\hat\theta_i=a_i \Theta_i^1+b_i\Theta_i^2,\;\; i=1,2.
\]
for some constants $a_i$ and $b_i$. By \eqref{hat-theta}, we necessarily have $b_i=0$ and $a_i>0$.
We thus obtain, as $x\to-\infty$,
\[
\hat\theta_i(x)=(a_i+o(1))e^{k_i x} \mbox{ for } i=1,2.
\]
It follows from this and \eqref{expansion-infty} that, for some $L_1>0$ large and $\tilde \epsilon_1>0$ small,
\[
\theta_i(x)\geq \hat\theta_i(x)\geq \tilde\epsilon_1 \psi_i(x) \mbox{ for } x\leq -L_1, i=1,2.
\]
Due to $\theta_i(x)>0$ on $[-L_1, 0]$ and the  continuity of $\theta_i$, we can find $\tilde\epsilon_2>0$ small so that
\[
\theta_i(x)\geq \tilde\epsilon_2 \psi_i(x) \mbox{ for } x\in[ -L_1, 0],  i=1,2.
\]
We thus have
\[
\theta_i(x)\geq \epsilon \psi_i(x) \mbox{ for } x\in (-\infty,0],  i=1,2,\; \epsilon:=\min\{\tilde\epsilon_1,\tilde\epsilon_2\}.
\]
Since $\theta_3(x):=M_*\phi_3(x)-\psi_3(x)\to M_*-1>0$ as $x\to+\infty$, by shrinking $\epsilon>0$ further if needed, we have
\[
\theta_3(x)\geq  \epsilon \psi_3(x) \mbox{ for } x\geq 0.
\]
It follows that
\[
(M_*-\epsilon)(\phi_1,\phi_2,\phi_3)\geq (\psi_1,\psi_2,\psi_3),
\]
a contradiction to the definition of $M_*$. This proves $\alpha_1^*=\alpha_2^*$.
\end{proof}
\begin{re}\label{uniq}
The method in the above proof of Lemma \ref{*=*} can be easily extended to show that
 $\hat \alpha_{i,1}=\hat \alpha_{i,2}$ in Lemma \ref{II},
 $\hat \alpha_{i,1}^*=\hat \alpha_{i,2}^*$ $(i=1,2)$ in Lemma \ref{III},  $\alpha_1^{**}=\alpha_2^{**}$ in Lemma \ref{IV}, and $\alpha_1^*=\alpha_2^*$ in Lemma \ref{prop:1-2-weak}.
Thus Theorems \ref{uul} and \ref{ull} are consequences of the combination of Lemma \ref{*=*} and the method used in its proof, and Lemmas \ref{II}, \ref{III}, \ref{IV}, \ref{prop:1-2-weak}.
\end{re}

Denoting $\alpha^*:=\alpha_1^*=\alpha_2^*$, we can now adapt the approach in the proof of Theorem \ref{(iv)} to complete the proof of conclusion (iii).

For $M>1$ and $i\in\{1,2\}$, set
\[
k_i:=\frac{\beta_{U_i}+\sqrt{\beta_{U_i}^2-4}}{2}, \;\; \bar \phi_{U_i, 0}(x):=Me^{k_ix},\;\; \bar \phi_{L, 0}(x):=M.
\]
In view of \eqref{ini data}, we can fix $M>1$ large enough such that
\[
\bar \phi_{U_i, 0}(x)=Me^{k_ix}>w_{U_i,0}(x) \mbox{ for } x\leq 0,\; \bar \phi_{L, 0}(x)=M>w_{L, 0}(x) \mbox{ for } x\geq 0.
\]
Let $(\phi_{U_1}(\cdot;\alpha^*), \phi_{U_2}(\cdot;\alpha^*),\phi_{L}(\cdot;\alpha^*))$ denote the unique solution of \eqref{ss-3-up}
with $\alpha=\alpha^*$. In view of \eqref{fast}, by enlarging $M$ further if needed, we may also assume that
\[
\bar \phi_{U_i, 0}(x)=Me^{k_ix}\geq \phi_{U_i}(x;\alpha^*) \mbox{ for } x\leq 0,\; i=1,2,\;\;
\bar \phi_{L, 0}(x)=M>\phi_{L, 0}(x: \alpha^*) \mbox{ for } x\geq 0.
\]

Clearly
\[
-\bar \phi_{U_i,0}''+\beta_{U_i}\bar\phi'_{U_i,0}=\bar \phi_{U_i,0}>\bar \phi_{U_i, 0}-\bar \phi_{U_i, 0}^2 \mbox{ for } x<0,
\]
\[
-\bar \phi_{L,0}''+\beta_L\bar\phi'_{L,0}=0> M(1-M)=\bar \phi_{L, 0}-\bar \phi_{L, 0}^2 \mbox{ for } x>0.
\]
Moreover,
\[
\xi_1a_{U_1}\bar \phi_{U_1,0}'(0)+\xi_2a_{U_2}\bar \phi_{U_2,0}'(0) -a_L\bar \phi_{L,0}'(0)=M(\xi_1a_{U_1}k_1+\xi_2a_{U_2}k_2)>0.
\]
Therefore $(\bar \phi_{U_1,0},\bar \phi_{U_2,0}, \bar \phi_{L,0})$ is a super solution of the corresponding elliptic problem of  \eqref{main equation}.
It follows that the unique solution $(\bar \phi_{U_1}(t,x), \bar \phi_{U_2}(t,x), \bar \phi_{L}(t,x))$ of  \eqref{main equation} with initial data
$(\bar \phi_{U_1,0}, \bar \phi_{U_2,0}, \bar \phi_{L,0})$ is nonincreasing in $t$, and as $t\to+\infty$,
\[
(\bar \phi_{U_1}(t,\cdot), \bar \phi_{U_1}(t,\cdot), \bar \phi_{L}(t,\cdot))\to (\hat\phi_{U_1}(\cdot), \hat\phi_{U_2}(\cdot), \hat \phi_L(\cdot))
\]
in $C^2_{loc}(\overline{\mathbb R}_{U_1})\times C^2_{loc}(\overline{\mathbb R}_{U_2})\times C^2_{loc}(\overline{\mathbb R}_L)$, and $(\hat\phi_{U_1},\hat\phi_{U_2}, \hat\phi_L)$ is a nonnegative stationary solution of  \eqref{main equation} satisfying
\[
(\hat\phi_{U_1},\hat\phi_{U_2}, \hat\phi_L)\geq (\phi_{U_1}(\cdot;\alpha^*), \phi_{U_2}(\cdot;\alpha^*),\phi_{L}(\cdot;\alpha^*)).
\]
Clearly
\begin{equation}\label{hat-i}
\hat\phi_{U_i}(x)\leq \bar\phi_{U_i, 0}(x)=Me^{k_ix} \mbox{ for } x\leq 0,\; i=1,2,\;\;  \hat \phi_L(x)\leq  \bar\phi_{L, 0}(x)=M \mbox{ for } x\geq 0.
\end{equation}
By a simple comparison consideration involving an ODE, we also easily see that
\[
0\leq \hat\phi_{U_i}\leq 1,\; 0\leq \hat \phi_L\leq 1.
\]
These facts imply that $(\hat\phi_{U_1},\hat\phi_{U_2}, \hat\phi_L)$ is a solution of \eqref{ss-3-up} with some $\alpha=\hat\alpha\in [\alpha^*, 1)$.
Due to \eqref{hat-i}, this solution is not of type \eqref{01}, and so it is of type \eqref{00}. If $\hat\alpha>\alpha^*$, by \eqref{slow} we obtain a contradiction to \eqref{hat-i}. Thus we necessarily have $\hat\alpha=\alpha^*$ and
\[
(\hat\phi_{U_1},\hat\phi_{U_2}, \hat\phi_L)=(\phi_{U_1}(\cdot;\alpha^*), \phi_{U_2}(\cdot;\alpha^*),\phi_{L}(\cdot;\alpha^*)).
\]

Since $\beta_L<2$, there exists a unique $l_0>0$ such that  the problem
\[
-\phi''+\beta_L\phi'=\phi-\phi^2 \mbox{ in } (0, l),\; \phi(0)=\phi(l)=0
\]
has a unique positive solution $\phi_l(x)$ if and only if $l>l_0$, and $\|\phi_l\|_\infty\to 0$ as $l\to l_0$.
Therefore we can fix $l>l_0$ such that $\phi_l(x)<\min\{1, \phi_L(1,x)\}$ for $x\in [0, l]$.
Define
\[
\underline \phi_{U_i, 0}(x)\equiv 0 \mbox{ for } x\leq 0,\; i=1,2,\;\;  \underline \phi_{L, 0}(x)=\phi_l(x) \mbox{ for } x\in [0, l],\;
\underline \phi_{L, 0}(x)=0 \mbox{ for } x\geq l.
\]
Then it is easily checked that $(\underline \phi_{U_1, 0}, \underline \phi_{U_2, 0}, \underline \phi_{L, 0})$ is a subsolution of the corresponding elliptic problem of \eqref{main equation}. It follows that the unique solution $(\underline \phi_{U_1}(t,x), \underline \phi_{U_1}(t,x),\underline \phi_{L}(t,x))$ of  \eqref{main equation} with initial function
$(\underline \phi_{U_1,0}, \underline \phi_{U_2,0},\underline \phi_{L,0})$ is nondecreasing in $t$, and as $t\to+\infty$,
\[
(\underline \phi_{U_1}(t,\cdot), \underline \phi_{U_2}(t,\cdot), \underline \phi_{L}(t,\cdot))\to (\tilde \phi_{U_1}(\cdot), \tilde \phi_{U_2}(\cdot), \tilde \phi_L(\cdot))
\]
in $C^2_{loc}(\overline{\mathbb R}_{U_1})\times C^2_{loc}(\overline{\mathbb R}_{U_2})\times C^2_{loc}(\overline{\mathbb R}_L)$, and $(\tilde \phi_{U_1}, \tilde \phi_{U_2}, \tilde \phi_L)$ is a positive stationary solution of  \eqref{main equation}.

Since
\[
\underline \phi_{U_i,0}< \bar \phi_{U_i,0} \mbox{ for } i=1,2,\; \underline \phi_{L,0}<\bar \phi_{L,0},
\]
we also have
\[
0<\tilde \phi_{U_i}(x)\leq \hat\phi_{U_i}(x)\leq M e^{k_i x} \mbox{ for } x\leq 0,\; i=1,2,\;\;  \;\;0<\tilde \phi_L(y)\leq \hat \phi_L(y) \mbox{ for } y\geq 0.
\]
It follows that  $(\tilde\phi_{U_1}(x), \tilde\phi_{U_2}(x), \tilde\phi_L(y))$ is a solution of \eqref{ss-3-up} with some $ \tilde \alpha\in (0, \alpha^*]$.
By Theorem \ref{uul}, we necessarily have  $\tilde\alpha=\alpha^*$, and hence
\[
(\tilde \phi_U, \tilde \phi_L)=(\phi_{U_1}(\cdot; \alpha^*),  \phi_{U_2}(\cdot; \alpha^*), \phi_L(\cdot;\alpha^*)).
\]

Using
\[
\bar \phi_{U_i}(0, x)> w_{U_i,0}(x) \mbox{ for } x\leq 0,\; i=1,2,\;\; \bar \phi_L(0, x)> w_{L, 0}(x) \mbox{ for } x\geq 0,
\]
we  obtain
\[
\bar \phi_{U_i}(t,x)\geq w_{U_i}(t,x) \mbox{ for } x\leq 0,\; i=1,2,\;\; \bar \phi_L(t, x)\geq w_L(t,x) \mbox{ for } x\geq 0.
\]
It follows that
\[
\limsup_{t\to+\infty} w_{U_i}(t,x)\leq \phi_{U_i}(x;\alpha^*) \mbox{ locally uniformly for } x\leq 0, i=1,2,
\]
and
\[
\limsup_{t\to+\infty} w_L(t,x)\leq \phi_L(x;\alpha^*) \mbox{ locally uniformly for } x\geq 0.
\]

Similarly, using
\[
\underline \phi_{U_i}(0,x)=0< w_{U_i}(1,x) \mbox{ for } x\leq 0,\; i=1,2,\; \underline \phi_L(0, x)< w_L(1,x) \mbox{ for } x\geq 0
\]
we obtain
\[
\underline \phi_{U_i}(t,x)\leq w_{U_i}(t+1,x) \mbox{ for } x\leq 0,\; i=1,2,\; \underline \phi_L(t, x)\leq w_L(t+1,x) \mbox{ for } x\geq 0,
\]
and hence
\[
\liminf_{t\to+\infty} w_{U_i}(t,x)\geq \phi_{U_i}(x;\alpha^*) \mbox{ locally uniformly for } x\leq 0,\; i=1,2,
\]
\[
\liminf_{t\to+\infty} w_L(t,x)\geq \phi_L(x;\alpha^*) \mbox{ locally uniformly for } x\geq 0.
\]
The desired asymptotic behavior of $(w_{U_1}, w_{U_2}, w_L)$ thus follows.
\hfill$\Box$

\subsubsection{Proof of {\bf (iv)}} This is similar to the proof of (iii) above, and we only sketch the main steps. We only consider the case $\beta_{U_1}\geq 2>\beta_{U_2}$.

Firstly we prove $\alpha_1^{**}=\alpha_2^{**}$ in Lemma \ref{IV} by the method of Lemma \ref{*=*} as indicated in Remark \ref{uniq}, where
 the behavior of $\phi_{U_1}(x)$ as $x\to-\infty$ is crucial to deduce a contradiction.

Next we prove the convergence result by constructing super and sub-solutions similarly, where we
treat   $\phi_{U_1}$ as  $\phi_{U_i}$ in the proof of  (iii),  and treat
$\phi_{L}$ and $\phi_{U_2}$ as $\phi_L$  in the proof of  (iii).

Since the modifications required in the arguments are obvious,
the details are omitted. \hfill $\Box$

\subsection
{Proof of Theorem \ref{main2}.}\ We may prove $\alpha_1^*=\alpha_2^*$ in Lemma \ref{prop:1-2-weak} by the method of Lemma \ref{*=*} as indicated in Remark \ref{uniq}, where  the behavior of $\phi_{U}(x)$ as $x\to-\infty$ is crucial to deduce a contradiction.

The rest of the proof is parallel to that of Theorem \ref{main:two}. More precisely,
the proof of conclusion (i) is similar to that of Theorem \ref{(i)}, where in the construction of the subsolution, we treat $w_{L_i}$ the same way as $w_L$ there, and treat $w_U$ the same. Here instead of using Theorem \ref{prop:2-bra} (i) we use Theorem \ref{ull} (I).

The proof of (ii) is similar to that of Theorem \ref{(iii)}, where $w_{L_1}$ and $w_{L_2}$ are treated in the same way as $w_L$ there, and Theorem \ref{ull} (II) is used instead of Theorem \ref{prop:2-bra} (ii).

The proof of (iii) is similar to that of Theorem \ref{(iv)} with $w_{L_1}$ and $w_{L_2}$  treated in the same way as $w_L$ there,
and Theorem \ref{ull} (III)  used instead of Theorem \ref{prop:2-bra} (iii).
\hfill$\Box$


\section{Stationary Solutions}

In this section we study the stationary problems \eqref{ss}, \eqref{ss-3-up} and \eqref{ss-1-2} by a phase plane approach. We start with \eqref{ss}.
For our analysis below, it is convenient to use the following change of variables to reduce \eqref{ss} to an equivalent system which
can be conveniently treated by a phase plane argument. We define
\[
\Phi_L(x)=\phi_L(\beta_L^{-1}x),\; \Phi_U(x)=\phi_U(\beta_U^{-1}x).
\]
Then by a simple calculation, using $a_U\beta_U=a_L\beta_L$, \eqref{ss} is reduced to
\begin{equation}\label{SS}
\left\{
\begin{array}{ll}
-\Phi_L'' + \Phi_L' = \beta_L^{-2}(\Phi_L - \Phi^2_L),\; 0<\Phi_L<1, & x\in \mathbb{R}_L,\\
-\Phi_U'' + \Phi_U' = \beta_U^{-2}(\Phi_U - \Phi^2_U),\; 0<\Phi_U<1, & x\in \mathbb{R}_U,\\
\Phi_L (0) = \Phi_U(0) =\alpha\in (0, 1), & \\
\Phi_U'(0) = \Phi_L'(0). &
\end{array}
\right.
\end{equation}

We will use a phase plane argument to solve \eqref{SS}. For $\mu>0$,  consider the equation
\begin{equation}\label{phi-eq}
-\phi''+\phi'=\mu(\phi-\phi^2).
\end{equation}
If $\phi_1(x)$ is a positive solution of \eqref{phi-eq} for $x\geq 0$ with $\mu=\beta_L^{-2}$ and $0<\phi_1<1$, and $\phi_2$  is
a positive solution of \eqref{phi-eq} for $x\leq 0$ with $\mu=\beta_U^{-2}$ and $0<\phi_2<1$, then clearly $(\Phi_L, \Phi_U):=(\phi_1, \phi_2)$ will be a solution
of \eqref{SS} provided that $\phi_1(0)=\phi_2(0)=\alpha$ and $\phi_1'(0)=\phi_2'(0)$. The converse is also true.

In the next subsection, we will describe the phase plane analysis of \eqref{phi-eq}, which will be used to solve \eqref{SS} in subsection 5.2. This method will be extended to solve the  stationary problems \eqref{ss-3-up} and \eqref{ss-1-2} in subsections 5.3 and 5.4, respectively.


\subsection{Phase plane analysis of \eqref{phi-eq}} This is a standard KPP equation, and the phase plane analysis for such equations has been done in several works; see, for example, \cite{AW, DL}. We recall the main features below for convenience of later reference and discussion.

Denote $ f(\phi) := \phi -\phi^2$. Then \eqref{phi-eq} is equivalent to the system
\begin{equation}\label{phi-psi}
\left\{
\begin{array}{l}
 \phi' (x)= \psi,\\  \psi'(x)=  \psi -\mu  f(\phi).
\end{array}
\right.
\end{equation}
A solution $(\phi(x), \psi(x))$ of \eqref{phi-psi} produces a trajectory on the $\phi\psi$-phase plane,
whose  slope is given by
\begin{equation}\label{Psiphi}
\frac{\mathrm{d}\psi }{\mathrm{d} \phi}=  1  - \mu\frac{f(\phi)}{\psi},
\end{equation}
whenever $\psi \neq0$. It is easily seen that $(0,0)$ and $(1,0)$ are the only singular points of \eqref{phi-psi}.
The eigenvalues of the Jacobian matrix at these points are
$$
\lambda^{\pm}_0=\frac{1}{2}\Big({1 \pm\sqrt{1 -4\mu}}\Big)\ \ \mbox{at } (0,0),\quad
\mbox{and} \quad
\lambda^{\pm}_1 =\frac{1}{2}\Big({1 \pm\sqrt{1+4\mu}}\Big)\ \ \mbox{at } (1,0).
$$
 Hence, $(1,0)$ is always a saddle point, with its unstable manifold tangent to $\ell_1^+:$  $\psi=\lambda_1^+(\phi-1)$ at $(1,0)$, and its stable manifold tangent to $\ell_1^-:$ $\psi=\lambda_1^-(\phi-1)$ at $(1,0)$

  The singular point $(0,0)$ is an unstable spiral if $\mu>1/4$; if $\mu=1/4$, it is an unstable node so that every trajectory approaching $(0,0)$ as $x\to-\infty$ must be tangent to $\ell$: $\psi=\frac 12 \phi$ there; if $0<\mu<1/4$,  then $(0,0)$ is again an unstable node, but this time, there exists one trajectory which approaches $(0,0)$ as $x\to-\infty$ and is tangent to $\ell^+$: $\psi=\lambda_0^+\phi$ at $(0,0)$, all other trajectories which approach $(0,0)$ as $x\to-\infty$ are tangent to $\ell^-$: $\psi=\lambda_0^-\phi$ at $(0,0)$. See Figure 2 for an illustration of these facts.
  \vskip15pt

\begin{figure}[!htbp]\label{fig:00}
\begin{center}
\includegraphics[scale=0.6]{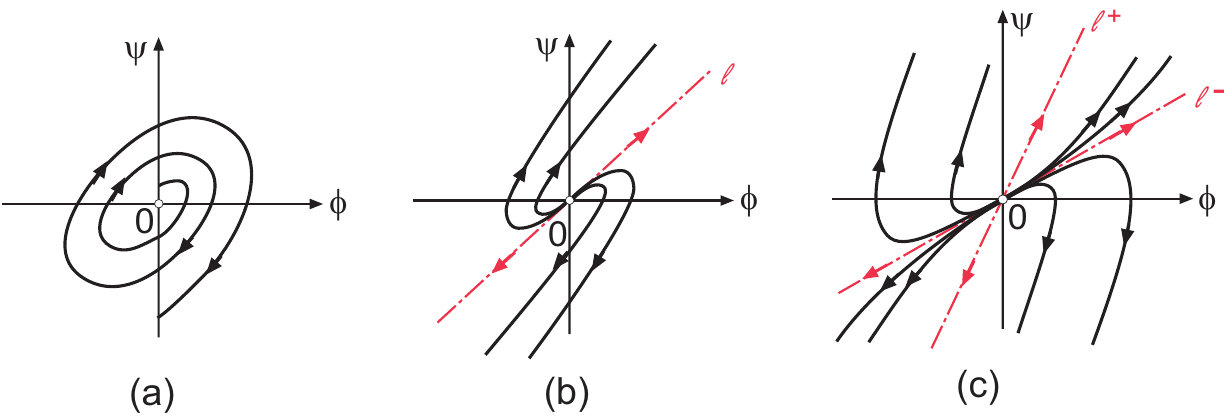}
\end{center}
\caption{\small{{\bf (a)}  $\mu>1/4$, and $(0,0)$ is an unstable spiral;
{\bf (b)} $\mu=1/4$,  and $(0,0)$ is an unstale node, with the straight line $\ell: \psi =\frac 12 \phi$  tangent to all trajectories at (0,0);
{\bf (c)} $\mu\in(0, 1/4)$, and $(0,0)$ is an unstable node, with the straight line $\ell^{-}: \psi =\lambda_0^{-} \phi$ tangent to all trajectories at (0,0) except one, which is tangent to $\ell^{+}: \psi =\lambda_0^{+} \phi$.}}
\end{figure}

The trajectories of \eqref{phi-psi} in the region
$D:=\{(\phi,\psi): 0\leq \phi \leq 1\}$ of the phase plane are described below, according to whether $\mu>1/4,\; =1/4$ or $<1/4$.
\smallskip

\noindent
{\bf Case (a):} $\mu>1/4$ (see Figure 3 (a)).  There is a unique trajectory $\Gamma^+$ in $D$, which approaches $(1,0)$ as $x\to+\infty$
and intersects the positive $\psi$ axis at some finite $x$ value, say $x=0$. $\Gamma^+$ lies above the $\phi$-axis, and is  the stable manifold of $(1,0)$ in $D$. The unstable manifold of $(1,0)$ in $D$ gives rise to a unique trajectory $\Gamma^-$, which approaches $(1,0)$ as $x\to-\infty$, and intersects the negative $\psi$-axis at some finite $x$ value. $\Gamma^-$ lies below the
$\phi$-axis.

Let  $D_{(\uparrow\,\Gamma^+)}$ denote the interior of the region  lying above $\Gamma^+$ in $D$,
$D_{(\Gamma^+\Gamma^-)}$ denote the interior of the region  lying between  $\Gamma^+$ and
$\Gamma^-$ in $D$,  and $D_{(\Gamma^-\downarrow)}$ denote the interior of the region lying below $\Gamma^-$ in $D$;
then the following hold:
\begin{itemize}
\item[(a1)]
For any $(\phi_0,\psi_0)\in D_{(\uparrow\,\Gamma^+)}$, the unique trajectory of \eqref{phi-psi} passing through $(\phi_0,\psi_0)$ intersects
the line $\phi=1$ in the positive direction, and intersects the positive $\psi$-axises in the negative direction; it remains in $D_{(\uparrow\,\Gamma^+)}$ between these two intersection points.
\item[(a2)] For any $(\phi_0,\psi_0)\in D_{(\Gamma^+\Gamma^-)}$,
the unique trajectory of \eqref{phi-psi} passing through $(\phi_0,\psi_0)$  intersects the negative $\psi$ axis in the positive direction, and it intersects the positive $\psi$-axis in the negative direction; it remains in $D_{(\Gamma^+\Gamma^-)}$ between these two intersection points, and  crosses  the positive $\phi$-axis exactly once.
$\Gamma_1$ and $\Gamma_2$ in Figure 3 (a) are two examples of such trajectories.
\item[(a3)]
 For any $(\phi_0,\psi_0)\in D_{(\Gamma^-\downarrow)}$,
the unique trajectory of \eqref{phi-psi} passing through $(\phi_0,\psi_0)$  intersects the negative $\psi$ axis in the positive direction, and it intersects the line $\phi=1$ in the negative direction; it remains in $D_{(\Gamma^-\downarrow)}$ between these two intersection points.
\end{itemize}

\medskip

\noindent
{\bf Case (c): $\mu\in(0, 1/4)$} (see Figure 3 (c)). For convenience of presentation, we consider Case (c) before Case (b). The stable manifold of $(1,0)$ in $D$ now is the trajectory $H$ which approaches $(1,0)$ as $x\to+\infty$, and approaches $(0,0)$ as $x\to-\infty$. $H$ lies above the $\phi$-axis. The unstable manifold of $(1,0)$ in $D$ is the trajectory
$\Gamma^-$, which approaches $(1,0)$ as $x\to-\infty$ and intersects the negative $\psi$-axis at some finite $x$ value. $\Gamma^-$ lies below the $\phi$-axis.

There is another special trajectory $\Gamma^*$, which approaches $(0,0)$ as $x\to-\infty$ and intersects the line $\phi=1$ at some point $(1,\gamma^*)$.
Moreover, $\Gamma^*$ lies above the line $\ell^+$: $\psi=\lambda_0^+ \phi$, and $H$ lies below the line $\ell^-$: $\psi=\lambda_0^-\phi$ (these can be easily checked by observing that the trajactories passing through a point on $\ell^{\pm}\cap\{0<\phi<1\}$ always move from below to above the line in its positive direction).

  Furthermore, if we denote by $D_{(\uparrow\,\Gamma^*)}$ the
interior of the region that is above $\Gamma^*$ in $D$, by $D_{(\Gamma^*H)}$ the interior of the region that lies between $\Gamma^*$ and $H$ in $D$,  by $D_{(H\Gamma^-)}$ the interior of the region that lies between $H$ and $\Gamma^-$ in $D$,
and by $D_{(\Gamma^-\downarrow)} $ the interior of the region lying below $\Gamma^-$ in $D$, then the following hold:
\begin{itemize}
\item[(c1)] If $(\phi_0,\psi_0)\in D_{(\uparrow\,\Gamma^*)}$, then the trajectory passing through $(\phi_0,\psi_0)$ intersects the line $\phi=1$ at
some point  $(1,\gamma)$ with $\gamma>\gamma^*$ in the positive direction, and it intersects the positive $\psi$-axis in the negative direction, and remains in
$D_{(\uparrow\,\Gamma^*)}$ between these two intersection points. 

\item[(c2)] If $(\phi_0,\psi_0)\in D_{(\Gamma^*H)}$, then the trajectory passing through $(\phi_0,\psi_0)$
 intersects the line $\phi=1$ at a point  $(1,\gamma)$ with $\gamma\in (0, \gamma^*)$  in the positive direction, say at some $x=x_0$, and it approaches $(0,0)$ as $x\to-\infty$. It remains in $D_{(\Gamma^*H)}$ for $x\in (-\infty, x_0)$. 
 All these trajectories, as well as $H$, are tangent to $\ell^-$ at $(0,0)$. In contrast, $\Gamma^*$ is tagent to $\ell^+$ at $(0,0)$.

 \item[(c3)] If $(\phi_0,\psi_0)\in D_{(H\Gamma^-)}$, then the trajectory passing through $(\phi_0,\psi_0)$
  intersects the negative $\psi$-axis in the positive direction, say at some $x=x_0$, and it approaches $(0,0)$ as $x\to-\infty$. It remains in $D_{(H\Gamma^-)}$ for $x\in (-\infty, x_0)$, and crosses the positive $\phi$-axis exactly once. 
 These trajectories are tangent to $\ell^-$ at $(0,0)$.
 \item[(c4)]  If $(\phi_0,\psi_0)\in D_{(\Gamma^-\downarrow)}$,
the unique trajectory of \eqref{phi-psi} passing through $(\phi_0,\psi_0)$  intersects the negative $\psi$ axis in the positive direction, and it intersects the line $\phi=1$ in the negative direction; it remains in $D_{(\Gamma^-\downarrow)}$ between these two intersection points.
\end{itemize}

\smallskip

\noindent
{\bf Case (b): $\mu= 1/4$} (see Figure 3 (b)). In this case we have $H$, $\Gamma^-$ and $\Gamma^*$ as in Case (c), and all the descriptions of the
trajectories in Case (c) remain valid; the only difference is that now $\ell^+$ and $\ell^-$ collapse into $\ell$.

\begin{figure}[!htbp]\label{fig:single}
\begin{center}
\includegraphics[scale=0.6]{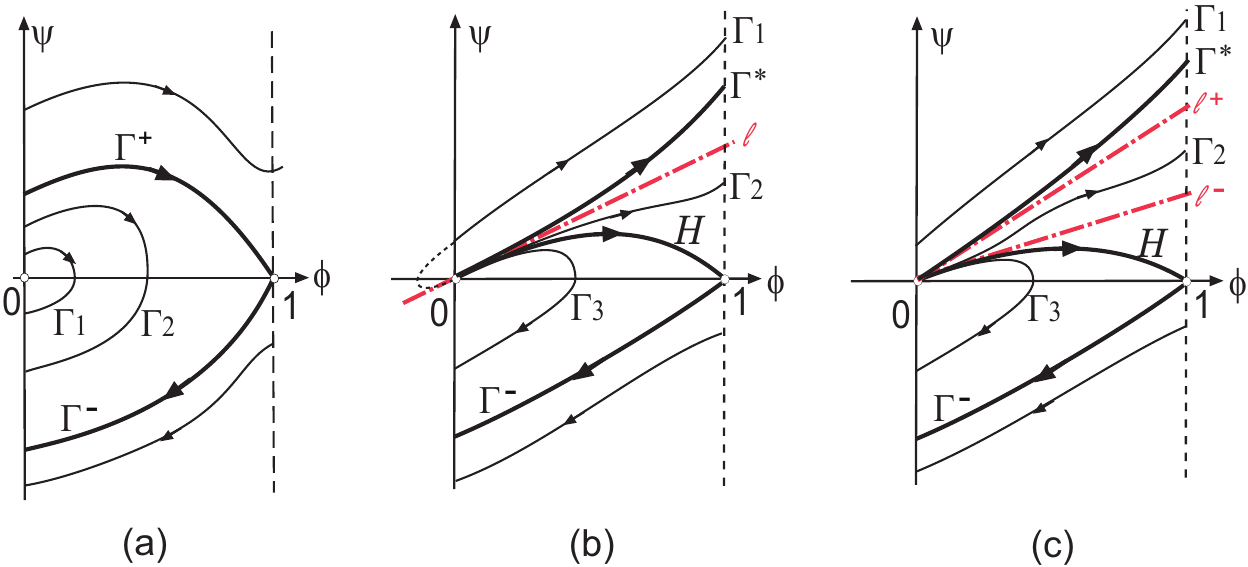}
\end{center}
\caption{\small{Trajectories of \eqref{phi-psi} in $D$. {\bf (a)} $\mu>1/4$;  {\bf (b)} $\mu=1/4$;
{\bf (c)} $0<\mu<1/4$.}}
\end{figure}

\subsection{Solutions of \eqref{ss}}

We prove Theorem \ref{prop:2-bra} by changing \eqref{ss} to its equivalent form \eqref{SS}, and using the phase plane analysis on \eqref{phi-eq}.

\begin{proof}[Proof of  Theorem \ref{prop:2-bra}] We will work with the equivalent problem \eqref{SS}.

{\bf Case (i).} If \eqref{SS} has a positive solution $(\Phi_L, \Phi_U)$ for some $\alpha\in (0,1)$, then $\Phi_U$ is a positive solution to \eqref{phi-eq}
with $\mu=\beta_U^{-2}>1/4$ for $x\in (-\infty, 0]$. Thus $\Gamma_U:=\{(\Phi_U(x), \Phi_U'(x)): x\leq 0\}$ is a trajectory of \eqref{phi-psi}
with $\mu>1/4$ that stays in the interior of $D$ in its negative direction. By the phase plane result in the previous subsection for case (a), this is possible only if $\Gamma_U$ is part of $\Gamma^-$, and thus $(\Phi_U(0), \Phi_U'(0))=:(\alpha, \psi_0)\in \Gamma^-$. In particular, $\psi_0<0$.

Now $\Gamma_L:=\{(\Phi_L(x), \Phi_L'(x)): x\geq 0\}$ is a trajectory of \eqref{phi-psi} with $\mu=\beta_L^{-2}$ starting from $(\Phi_L(0), \Phi_L'(0))=(\alpha, \psi_0)$
moving in the positive direction as $x$ increases from $x=0$.
Since $\psi_0<0$, by the phase plane anaylsis for cases (a)-(c) in the previous subsection, $\Gamma_L$ intersects the negative $\psi$-axis at a finite time $x>0$,
i.e., $\Phi_L(x)=0$ for some finite $x>0$, a contradiction to the assumption that $\Phi_L(x)>0$ for all $x>0$.
 This contradiction completes the proof.

{\bf Case (ii).} Clearly $(\Phi_L, \Phi_U)$ solves \eqref{SS} with $\alpha\in (0,1)$ if and only if
\begin{itemize}
\item[(ii)$_1$.] $\phi_1:=\Phi_L$ is a positive solution of
\eqref{phi-eq} in $[0, +\infty)$ with $\mu=\mu_1:=\beta_L^{-2}\in (0, 1/4]$, and $0<\phi_1<1$,
\item[(ii)$_2$.] $\phi_2:=\Phi_U$ is a positive solution of
\eqref{phi-eq} in $(-\infty, 0]$ with $\mu=\mu_2:=\beta_U^{-2}\in (0, 1/4]$, and $0<\phi_2<1$,
\item[(ii)$_3$.] $\phi_1(0)=\phi_2(0)=\alpha$, $\phi_1'(0)=\phi_2'(0)$.
\end{itemize}

We now use the phase plane trajectories of \eqref{phi-psi} to obtain a unique pair $(\phi_1, \phi_2)$ having the above properties.
Since $\mu_1, \mu_2\in (0, 1/4]$, we are in cases (b) or (c) described in the previous subsection.
For $\mu=\mu_i$, $i=1,2$, we denote the special trajectories $H, \Gamma^*$ and $\Gamma^-$ by $H_i,\; \Gamma^*_i$ and $\Gamma^-_i$,
respectively. Similarly, $\ell$, $\ell^+$ and $\ell^-$ are denoted by $\ell_i$, $\ell^+_i$ and $\ell^-_i$, respectively.

Given $\alpha\in (0, 1)$, the line $\phi=\alpha$ intersects $H_1$ at a point $(\alpha, \psi_0)$. Since $H_1$ lies below $\ell_1$ in case (b), and below $\ell^-_1$ in case (c), we have $\psi_0<\frac12 \alpha$ in case (b), $\psi_0<\frac12 (1-\sqrt{1-4\mu_1})\alpha< \frac12 \alpha$ in case (c).
Thus we always have $\psi_0<\frac12\alpha$. Clearly the trajectory passing through $(\alpha, \psi_0)$ at $x=0$ gives rise to a solution $\phi_1(x)$ $(x\geq 0)$ satisfying (ii)$_1$ above.  Moreover, $\phi_1'>0$ and $\phi_1(+\infty)=1$.

We next find $\phi_2$ by considering the trajectories of \eqref{phi-psi} with $\mu=\mu_2$. To satisfy (ii)$_3$,
 $\phi_2$ must be generated by the unique trajectory $\Gamma$ passing through $(\alpha, \psi_0)$ at $x=0$, in its negative direction, i.e., for $x<0$. We already know that $0<\psi_0<\frac12 \alpha\leq\frac12(1+\sqrt{1-4\mu_2})\alpha$. This implies that $(\alpha, \psi_0)$ belongs to
$D_{(\Gamma_2^*H_2)}\cup H_2\cup D_{(H_2\Gamma_1^-)}$, and by our analysis for cases (b) and (c) in the previous subsection,
we know that $\Gamma$ approches $(0,0)$ as $x\to-\infty$. Moreover, it is also easily seen that $\Gamma$ stays above the $\phi$-axis
for $x\in (-\infty, 0]$. Therefore $\phi_2'>0$  and  $\phi_2(-\infty)=0$, $\phi_2(0)=\alpha$,
$\phi_2'(0)=\psi_0=\phi_1'(0)$.

We have thus proved that in Case (ii), for any $\alpha\in (0, 1)$, \eqref{SS} and hence \eqref{ss} has a  solution. Moreover,
\[
\phi_L'>0, \; \phi_U'>0, \; \phi_L(+\infty)=1,\; \phi_U(-\infty)=0.
\]

To show uniqueness, suppose  $(\Phi_L, \Phi_U)$ is an arbitrary solution of \eqref{SS} with $\alpha\in (0,1)$. Then $\Gamma_L:=\{(\Phi_L(x), \Phi_L'(x)): x\geq 0\}$ is a trajectory of \eqref{phi-psi} with $\mu=\mu_1=\beta_L^{-2}\in (0, 1/4]$, which stays inside $D$ for all positive $x$. By the phase plane result for case (b) and (c), this is possible only if
$\Gamma_L$ is part of $H_1$, and hence $(\Phi_L(0), \Phi_L'(0))=(\alpha, \psi_0)$ and $\Phi_L\equiv \phi_1$. It follows in turn that $\Phi_U\equiv \phi_2$.
That is $(\Phi_L, \Phi_U)$ coincides with the above constructed $(\phi_1, \phi_2)$. This proves the uniqueness conclusion.

{\bf Case (iii).}  In this case, $(\Phi_L, \Phi_U)$ solves \eqref{SS} with $\alpha\in (0,1)$ if and only if
\begin{itemize}
\item[(iii)$_1$.] $\phi_1:=\Phi_L$ is a positive solution of
\eqref{phi-eq} in $[0, +\infty)$ with $\mu=\mu_1:=\beta_L^{-2}>1/4$, and $0<\phi_1<1$,
\item[(iii)$_2$.] $\phi_2:=\Phi_U$ is a positive solution of
\eqref{phi-eq} in $(-\infty, 0]$ with $\mu=\mu_2:=\beta_U^{-2}\in(0, 1/4]$, and $0<\phi_2<1$,
\item[(iii)$_3$.] $\phi_1(0)=\phi_2(0)=\alpha$, $\phi_1'(0)=\phi_2'(0)$.
\end{itemize}

In the $\phi\psi$-plane we now consider the two curves $\Gamma_1^+$ and $\Gamma^*_2$, where for $i=1,2$, the subscript $i$ is used as in Case (ii) above to indicate the trajectory in the phase plane of \eqref{phi-psi} with $\mu=\mu_i$. We claim that
\[
\mbox{ $\Gamma_1^+$ and $\Gamma^*_2$ intersects at exactly one point $(\alpha_0,\psi_0)$.}
\]
Since these are continuous curves, with $\Gamma_1^+$ above $\Gamma^*_2$ near $\phi=0$, and $\Gamma_1^+$ below $\Gamma^*_2$ near $\phi=1$, clearly they have at least one intersection point at some $\phi\in (0, 1)$. For clarity let us denote by
$\psi=\Psi_1(\phi)$ the equation for $\Gamma_1^+$, and $\psi=\Psi_2(\phi)$ the equation for $\Gamma_2^*$. Then
\[
\Psi_i'(\phi)=1-\mu_i\frac{f(\phi)}{\Psi_i(\phi)} \;\; \mbox{ for } \phi\in (0,1),\; i=1,2.
\]
Suppose the two curves intersect at $\phi=\phi_0\in (0,1)$. Then $\Phi_1(\phi_0)=\Phi_2(\phi_0)$ and due to $\mu_1>\mu_2$,
we obtain
\[
\Psi_1'(\phi_0)=1-\mu_1\frac{f(\phi_0)}{\Psi_1(\phi_0)}<1-\mu_2\frac{f(\phi_0)}{\Psi_2(\phi_0)}=\Psi_2'(\phi_0).
\]
This indicates that at any intersection point of the two curves, the slope of $\Gamma_1^+$ is always bigger than that of $\Gamma_2^*$.
This fact clearly implies that there can be no more than one intersection point. This proves our claim.

Let us also observe that in the range $\phi\in(0, \alpha_0)$, $\Gamma^+_1$ is above $\Gamma^*_2$, and for $\phi\in (\alpha_0, 1)$,
$\Gamma^+_1$ is below $\Gamma^*_2$; that is
\[
\Psi_1(\phi)>\Psi_2(\phi)\;  \mbox{ for } \phi\in (0, \alpha_0), \;\;\; \Psi_1(\phi)<\Psi_2(\phi)\;  \mbox{ for } \phi\in (\alpha_0, 1).
\]

We show next that \eqref{SS} has no solution for $\alpha\in (0, \alpha_0)$. Suppose on the contrary that it has a solution for some
$\alpha\in (0, \alpha_0)$; then we obtain a pair $(\phi_1,\phi_2)$ satisfying (iii)$_1$-(iii)$_3$ above. By our phase plane analysis for case (a), necesarily $\phi_1$ is generated by the trajectory $\Gamma^+_1: \psi=\Psi_1(\phi)$ in $\phi\geq \alpha$, and $\phi_2$ is generated by the trajectory of \eqref{phi-psi}
with $\mu=\mu_2$ passing through $(\alpha, \Psi_1(\alpha))$ in the negative  direction.

On the other hand, $\alpha<\alpha_0$ implies $\Psi_2(\alpha)<\Psi_1(\alpha)$ and hence $(\alpha, \Psi_1(\alpha))$ lies above
$\Gamma^*_2$. Thus $(\alpha, \Psi_1(\alpha))\in D_{(\uparrow\,\Gamma^*_2)}$, and by the phase plane analysis for cases (b) and (c),
we know that the trajectory passing through $(\alpha, \Psi_1(\alpha))$ intersects the positive $\psi$-axis in the negative direction. This implies that
$\phi_2(x_0)=0$ for some $x_0<0$, a contradiction to $0<\phi_2(x)<1$ for $x<0$. This proves the non-existence conclusion for $\alpha\in (0, \alpha_0)$.

Suppose now $\alpha\in [\alpha_0, 1)$. Then $\Psi_2(\alpha)\geq \Psi_1(\alpha)$ and hence $(\alpha, \Psi_1(\alpha))$ lies on or below
$\Gamma^*_2$ and above the $\phi$-axis. By our phase plane analysis for cases (b) and (c), we know that the  trajectory of \eqref{phi-psi}
with $\mu=\mu_2$ passing through $(\alpha, \Psi_1(\alpha))$ approaches $(0,0)$ as $x\to-\infty$, and it is also easily seen that it stays
above the $\phi$-axis in the negative direction from $(\alpha, \Psi_1(\alpha))$. Hence it generates a $\phi_2$ satisfying (iii)$_2$ and
\begin{equation}\label{phi2}
 \phi_2'(x)>0 \mbox{ for } x<0,\; \phi_2(-\infty)=0,\; \phi_2(0)=\alpha,\; \phi_2'(0)=\Psi_1(\alpha).
\end{equation}
Clearly $\Gamma_1^+$ in the positive direction from  $(\alpha, \Psi_1(\alpha))$ generates a $\phi_1$ satisfying (III)$_1$ and
\begin{equation}\label{phi1}
 \phi_1'(x)>0 \mbox{ for } x>0,\; \phi_1(+\infty)=1,\; \phi_1(0)=\alpha,\; \phi_1'(0)=\Psi_1(\alpha)=\phi_2'(0).
\end{equation}
We thus obtain a pair $(\phi_1, \phi_2)$ satisfying the above properties (iii)$_1$-(iii)$_3$, as required. Moreover, they also satisfy \eqref{phi2} and \eqref{phi1}.
The uniqueness follows from a similar reasoning as in Case (ii).

To complete the proof of the theorem, it remains to prove the conclusion in part {\bf (iv)} of the theorem.
We have $\mu_2\in(0, 1/4]$ and so $\phi_2$ is generated by the trajectory $\Gamma^*_2$ in the negative direction when $\alpha=\alpha_0$ in case (iii). For $\alpha\in (\alpha_0, 1)$ in case (iii), or $\alpha\in (0, 1)$ in case (ii), $\phi_2$ it is generated by the trajectory $\Gamma$  which lies below $\Gamma_2^*$ and above the $\phi$-axis.
 In the former case, the asymptotic behavior of $\phi_2$ is determined by the tangent line of $\Gamma^*_2$ at $(0,0)$, which is $\ell^+_2$, while in the latter cases, the asymptotic behavior of $\phi_2$ is determined by the tangent line of the trajectory $\Gamma$ at $(0,0)$, which is $\ell^-_2$. These facts imply in particular that $\phi_U(x)=\Phi_U(\beta_U x)\to 0$ exponentially as $x\to-\infty$. We next determine its exact behavior as $x\to-\infty$.

To simplify notations, we write $\phi=\phi_U$ and hence
\[
-\phi''+\beta_U\phi'=\phi-\phi^2,\; \phi>0\;  \mbox{ for } x<0,\; \phi(-\infty)=\phi'(-\infty)=0.
\]
To determine the exact behavior of $\phi(x)$ as $x\to-\infty$, we view $\phi$ as the solution of the following linear equation
\begin{equation}
\label{pert}
-\phi''+\beta_U\phi'=[1+\epsilon(x)]\phi,\; \phi>0 \mbox{ for } x<0,\; \phi(-\infty)=\phi'(-\infty)=0,
\end{equation}
where $\epsilon(x):=-\phi(x)\to 0$ exponentially as $x\to-\infty$. Denote
\[
k^{\pm}=\frac12 \left(\beta_U\pm \sqrt{\beta_U^2-4}\,\right);
\]
then for $\beta_U>2$, it is easily checked that
\[
\phi_+(x):=e^{k^+x},\; \phi_-(x):=e^{k^- x}
\]
are the fundamental solutions of the linear equation
\[
-\phi''+\beta_U\phi'=\phi.
\]
By standard ODE theory one sees that the perturbed problem \eqref{pert} has two fundamental solutions of the form
\[
\tilde\phi_+(x)=(1+o(1))\phi_+(x),\; \tilde\phi_-(x)=(1+o(1))\phi_-(x) \;\mbox{ as } x\to-\infty,
\]
and
\[
\tilde\phi'_+(x)=(1+o(1))\phi'_+(x),\; \tilde\phi'_-(x)=(1+o(1))\phi'_-(x) \;\mbox{ as } x\to-\infty.
\]
It follows that
\[
\phi_U(x)=a\tilde \phi_+(x)+b\tilde\phi_-(x)=a(1+o(1))e^{k^+x}+b(1+o(1))e^{k^-x} \mbox{ as } x\to-\infty,
\]
\[
\phi'_U(x)=a(1+o(1))k^+e^{k^+x}+b(1+o(1))k^-e^{k^-x} \mbox{ as } x\to-\infty.
\]
Using these and our above description of the tangent lines of the trajectories, we necessarily have $a>0=0$ when $\alpha=\alpha_0$, and $b>0$ when $\alpha>\alpha_0$. The desired behavior for $\phi_U(x)$ as $x\to-\infty$ is a simple consequence of this fact.

The case $\beta_U=2$ is more difficult to treat. In this case, $k^+=k^-=1$ and the fundamental solutions of the linear equation are
\[
\phi_1(x):=e^{x},\; \phi_2(x):=-x e^{x}.
\]
By standard  ODE theory the perturbed problem \eqref{pert} has two fundamental solutions of the form
\[
\tilde\phi_1(x)=(1+o(1))\phi_1(x),\; \tilde\phi_2(x)=(1+o(1))\phi_2(x) \;\mbox{ as } x\to-\infty,
\]
and
\[
 \tilde\phi_1'(x))=(1+o(1))\phi_1'(x)),\;  \tilde \phi_2'(x)=(1+o(1))\phi_2'(x) \;\mbox{ as } x\to-\infty.
 \]
It follows that
\[
\phi_U(x)=a\tilde \phi_1(x)+b\tilde\phi_2(x),\;
\phi'_U(x)=a \tilde \phi_1'(x)+b \tilde \phi_2'(x).
\]
By some tedious calculations, it can be shown that if the trajectory
\[
\Gamma_U:=\{(\Phi_U(x), \Phi_U'(x)): x\leq 0\}\equiv \{(\phi_U(\beta_U^{-1}x), \phi_U'(\beta_U^{-1}x)): x\leq 0\}
\]
 lies on $\Gamma_2^*$, then $a>0=b$, and if
$\Gamma_U$ lies below $\Gamma_2^*$, then $b>0$. Thus $a>0=b$ when $\alpha=\alpha_0$, and $b>0$ when $\alpha>\alpha_0$.
The desired behavior for $\phi_U(x)$ as $x\to-\infty$ thus follows.
\end{proof}


\subsection{Solutions of \eqref{ss-3-up}}

In this subsection, we prove a slightly weaker version of Theorem \ref{uul}.
As before, if we set
\[
\Phi_L(x):=\phi_L(\beta_L^{-1}x),\; \Phi_{U_1}(x):=\phi_{U_1}(\beta_{U_1}^{-1}x),\; \Phi_{U_2}(x):=\phi_{U_2}(\beta_{U_2}x),
\]
then \eqref{ss-3-up} is reduced to
\begin{equation}\label{SS-3-up}
\left\{
\begin{array}{ll}
- \Phi_{U_1}'' +\Phi_{U_1}'= \beta_{U_1}^{-2}(\Phi_{U_1} - \Phi^2_{U_1}),\;  0<\Phi_{U_1}<1, & x\in \mathbb{R}_{U_1} := (-\infty,0),\\
- \Phi_{U_2}'' +\Phi_{U_2}' = \beta_{U_2}^{-2}(\Phi_{U_2} - \Phi^2_{U_2}), \;  0<\Phi_{U_2}<1, & x\in \mathbb{R}_{U_2} := (-\infty,0),\\
- \Phi_L'' +\Phi_L' = \beta_{L}^{-2}(\Phi_L - \Phi^2_L),\; \;\;\;\;\; 0<\Phi_L<1, & x\in \mathbb{R}_L := (0,+\infty),\\
 \Phi_{U_1}(0) = \Phi_{U_2}(0) = \Phi_L (0) =\alpha\in (0,1), & \\
\xi_1\Phi_{U_1}'(0) +\xi_2 \Phi_{U_2}' (0) = \Phi_L'(0), &
\end{array}
\right.
\end{equation}
where
\[
\xi_1=\frac{a_{U_1}\beta_{U_1}}{a_L\beta_L},\; \xi_2=\frac{a_{U_2}\beta_{U_2}}{a_L\beta_L}\; \mbox{ and hence } \xi_1+\xi_2=1.
\]

We now extend the phase plane approach in the previous subsection to treat
 \eqref{SS-3-up}. Recall that, according to the behavior of the solution we have four cases which cover all the possibilities of the parameters
 $\beta_{U_1},\beta_{U_2}$ and $\beta_L$:
 \begin{itemize}
\item[\rm(I)]  $\beta_{U_1}, \beta_{U_2} <2$.
\item[\rm(II)]  $ \beta_{U_1}, \beta_{U_2}, \beta_L \geq  2$.
\item[\rm(III)]  $\beta_{U_1}, \beta_{U_2} \geq  2> \beta_L. $
\item[\rm(IV)]  $\max\{\beta_{U_1},\beta_{U_2}\}\geq 2>\min \{\beta_{U_1},\beta_{U_2}\}$.
\end{itemize}

Our results in the following four lemmas are slightly weaker than that in Theorem \ref{uul} (see comments in Remark 5.5 below).

\begin{lem}\label{I} {\rm(Case (I))} If $\max\{\beta_{U_1}, \beta_{U_2}\} \in (0, 2)$, then \eqref{ss-3-up} has no solution.
\end{lem}
\begin{lem}\label{II}{\rm(Case (II))}
 If $ \beta_{U_1}, \beta_{U_2}, \beta_L \geq  2$, then the following hold:
\begin{itemize}
\item[\bf a.] For every $\alpha\in (0,1)$, \eqref{ss-3-up} has a continuum of solutions satisfying \eqref{00}.
 \item[\bf b.] For $i=1,2$ and $j=3-i$, there exist  $\hat\alpha_{i,1}, \hat\alpha_{i,2} \in (0,1)$ with $\hat\alpha_{i,1}\leq \hat\alpha_{i,2}$
 such that  for each $\alpha\in \{\hat\alpha_{i,1}\}\cup [\hat \alpha_{i,2}, 1)$,  \eqref{ss-3-up} has a
unique solution satisfying \eqref{01}
and has no such solution for $\alpha
\in (0, \hat\alpha_{i,1})$.
\item[\bf c.] Any solution of \eqref{ss-3-up} with $\alpha\in (0,1)$ satisfies either \eqref{00} or \eqref{01}. Moreover, for any $\alpha\in (0,1)$,  there exist  $c_i=c_i(\alpha)>0$ for $i=1,2$, and a solution of  \eqref{ss-3-up},
  such that, as $x\to-\infty$, for both $i=1,2$, \eqref{slow} holds.
\end{itemize}
\end{lem}
\begin{lem}\label{III} {\rm(Case (III))}
 If $\beta_{U_1}, \beta_{U_2} \geq  2> \beta_L>0 $, then the following hold:
\begin{itemize}
\item[\bf a.]
There exist $\alpha^*_1, \alpha_2^* \in (0,1)$ with $\alpha^*_1\leq \alpha_2^*$
 such that \eqref{ss-3-up} has a
continuum of solutions satisfying \eqref{00} for each $\alpha\in (\alpha_2^{*}, 1)$, has a unique solution satisfying \eqref{01} for $\alpha\in\{\alpha_1^*, \alpha_2^*\}$, and has no such solution for $\alpha
\in (0, \alpha_1^{*})$.
\item[\bf b.] For $i=1,2$ and $j=3-i$, there exist  $\hat\alpha^*_{i,1}, \hat\alpha^*_{i,2} \in (0,1)$ with $\hat\alpha^*_{i,1}\leq \hat\alpha^*_{i,2}$ and $\hat\alpha^*_{i,1}>\alpha^*_1,
\;\hat\alpha^*_{i,2}>\alpha^*_2$,
 such that  for each $\alpha\in \{\hat\alpha^*_{i,1}\}\cup [\hat \alpha^*_{i,2}, 1)$,  \eqref{ss-3-up} has a
unique solution satisfying \eqref{01}, and has no such solution for $\alpha\in (0, \hat\alpha^*_{i,1})$.
\item[\bf c.] Any solution of \eqref{ss-3-up} with $\alpha\in (0,1)$ satisfies either \eqref{00} or \eqref{01}.

 \item[\bf d.] If $\alpha\in (\alpha_2^{*}, 1)$, then  for any solution of \eqref{ss-3-up} satisfying \eqref{00}, there exists $i\in\{1, 2\}$ and $c_i=c_i(\alpha)>0$ such that, as $x\to-\infty$, \eqref{slow} holds.
  \item[\bf e.]  If $\alpha\in \{\alpha^*_1,\alpha_2^{*}\}$, then the unique solution of \eqref{ss-3-up}  satisfies \eqref{fast}   as $x\to-\infty$.
  \end{itemize}
  \end{lem}
  \begin{lem}\label{IV}
    {\rm(Case (IV))} If $\min\{\beta_{U_1}, \beta_{U_2}\}<2\leq \max\{\beta_{U_1},\beta_{U_2}\}$, then there exist $\alpha_1^{**},\, \alpha_2^{**}\in (0, 1)$
    with $\alpha_1^{**}\leq \alpha_2^{**}$  such that
\eqref{ss-3-up} has a unique solution
 for each $\alpha\in \{\alpha_1^{**}\}\cup [\alpha_2^{**}, 1)$,   and has
no solution for $\alpha\in (0, \alpha_1^{**})$. Moreover, when $\alpha\in\{\alpha_1^{**}\}\cup[ \alpha_2^{**}, 1)$
and $\beta_{U_j}=\min\{\beta_{U_1}, \beta_{U_2}\}$, the solution $(\phi_{U_1}, \phi_{U_2},\phi_L)$ satisfies \eqref{01} with $i=3-j$,
and as $x\to-\infty$, \eqref{slow} holds for $\phi_{U_j}$ when $\alpha\in(\alpha_2^{**}, 1)$, and \eqref{fast} holds for $\phi_{U_j}$ when
 $\alpha\in\{\alpha_1^{**},\; \alpha_2^{**}\}$.
\end{lem}

\begin{re}
It can be shown that $\alpha_1^*=\alpha_2^*$,
 $\hat \alpha_{i,1}=\hat \alpha_{i,2}$,
 $\hat \alpha_{i,1}^*=\hat \alpha_{i,2}^*$ and $\alpha_1^{**}=\alpha_2^{**}$ in the above theorems $($which clearly gives all the conclusions in Theorem \ref{uul} $)$. Since the proof of these facts does not use the phase plane method, it is given in Section 4; see Lemma \ref{*=*} and Remark \ref{uniq}.
\end{re}
\noindent
{\bf Proof of Lemma \ref{I}:} We will work with the equivalent problem \eqref{SS-3-up}.
 Suppose by way of contradiction that \eqref{SS-3-up} has a solution $(\Phi_{U_1},\Phi_{U_2}, \Phi_L)$ for some $\alpha\in (0, 1)$. Since $\beta_{U_i}<2$ for $i=1,2$, the same consideration as in case (i) of Theorem \ref{prop:2-bra} shows that
\[
\Gamma_{U_i}:=\{(\Phi_{U_i}(x), \Phi_{U_i}'(x): x\leq 0\},\; i=1,2
\]
is part of the trajectory $\Gamma^-_i$ of \eqref{phi-psi} with $\mu=\beta_{U_i}^{-2}$, where the subscript $i$ in $\Gamma^-_i$ indicates the special trajectory
$\Gamma^-$ for \eqref{phi-psi} with $\mu=\beta_{U_i}^{-2}$. Therefore $\Phi'_{U_i}(0)<0$. It follows that
\[
\Phi_L'(0)=\xi_1\Phi_{U_1}'(0)+\xi_2\Phi_{U_2}'(0)<0.
\]
Now $\Gamma_L:=\{(\Phi_L(x), \Phi'_L(x): x\geq 0\}$ is the trejectory of \eqref{phi-psi} with $\mu=\beta_L^{-2}$ starting from $(\alpha, \Phi_L'(0))$ moving in the positive direction as $x$ increases from 0. As $(\alpha, \Phi'_L(0))$ is below the positive $\phi$-axis, by the phase plane result in subsection 5.1 for cases (a)-(c), $\Gamma_L$ must intersects the line $\phi=0$ at some finite $x>0$, a contradiction to $\Phi_L(x)>0$ for all $x>0$.
 This contradiction completes the proof.$\hfill\Box$
\medskip

\noindent
{\bf Proof of Lemma \ref{II}.} Clearly $(\Phi_{U_1}, \Phi_{U_2}, \Phi_L)$ solves \eqref{SS-3-up} with $\alpha\in (0,1)$ if and only if
\begin{itemize}
\item[(II)$_1$.] $\phi_1:=\Phi_{U_1}$ is a positive solution of
\eqref{phi-eq} in $(-\infty, 0]$ with $\mu=\mu_1:=\beta_{U_1}^{-2}\in (0, 1/4]$, and $0<\phi_1<1$,
\item[(II)$_2$.] $\phi_2:=\Phi_{U_2}$ is a positive solution of
\eqref{phi-eq} in $(-\infty, 0]$ with $\mu=\mu_2:=\beta_{U_2}^{-2}\in (0, 1/4]$, and $0<\phi_2<1$,
\item[(II)$_3$.] $\phi_3:=\Phi_L$ is a positive solution of
\eqref{phi-eq} in $[0,+\infty)$ with $\mu=\mu_3:=\beta_{L}^{-2}\in (0, 1/4]$, and $0<\phi_3<1$,
\item[(II)$_4$.] $\phi_1(0)=\phi_2(0)=\phi_3(0)=\alpha$, $\xi_1\phi_1'(0)+\xi_2\phi_2'(0)=\phi'_3(0)$.
\end{itemize}

For $i=1,2,3$, let $\Gamma^*_i,\; H_i,\; \Gamma_i^-$ denote, respectively, the special trajectories $\Gamma^*, H, \Gamma^-$ of \eqref{phi-psi} with $\mu=\mu_i$; similarly $\ell$, $\ell^+$ and $\ell^-$ will be denoted by $\ell_i$, $\ell^+_i$ and $\ell^-_i$, respectively.

Fix $\alpha\in (0, 1)$. Then the line $\phi=\alpha$ intersects $H_3$ in the $\phi\psi$-plane at a unique point $(\alpha,\psi_0)$. The part of the trajectory of $H_3$ starting from $(\alpha, \psi_0)$ in its positive direction clearly generates a $\phi_3$ satisfying (II)$_3$ with
\[
\mbox{ $\phi_3(0)=\alpha,\; \phi_3'(0)=\psi_0,\;  \phi_3(+\infty)=1$ and $\phi'_3>0$.}
\]
 Moreover,
$(\alpha, \psi_0)$ is the only point on the line $\phi=\alpha$ such that the trajectory of \eqref{phi-psi} with $\mu=\mu_3$ passing through it
 generates a solution $\phi$ of \eqref{phi-eq} satisfying $\phi(x)\in (0, 1)$ for $x\geq 0$.

 Since $H_3$ lies below $\ell^-_3$ (which coincides with $\ell_3$ when $\mu_3=1/4$), we have $\psi_0<\frac 12 \alpha$.
 Let $(\alpha, \psi^*_i)$ denotes the intersection point of $\phi=\alpha$ with $\Gamma^*_i$, $i=1,2$. Since $\Gamma^*_i$ lies above $\ell^+_i$ for $i=1,2$, we have
 \[
 \psi^*_i>\frac 12 (1+\sqrt{1-4\mu_i}\,)\alpha\geq \frac 12 \alpha>\psi_0,\; i=1,2.
 \]
 It follows that
 \[
 \xi_1\psi_1^*+\xi_2\psi_2^*>\psi_0.
 \]
 Therefore there is a continumm of positive constants $\psi_1^0,\psi_2^0$ satisfying
 \begin{equation}\label{phi12}
 \xi_1\psi^0_1+\xi_2\psi^0_2=\psi_0,\;0<\psi_i^0\leq \psi_i^*,\; i=1,2.
 \end{equation}

 Fix any such $\psi_1^0$ and $\psi_2^0$; for $i=1,2$, the point $(\alpha, \psi^0_i)$ lies between $\Gamma_i^*$ and the positive $\phi$-axis in the $\phi\psi$-plane, and hence
  $(\alpha, \psi^0_i)\in \Gamma_i^*\cup D_{(\Gamma_i^*\Gamma_i^-)}$, $i=1,2$. By the phase plane analysis in subsection 5.1 for cases (b) and (c), we know that, for $i=1,2$,  the trajectory $\Gamma_{\psi^0_i}$ of \eqref{phi-psi} with $\mu=\mu_i$ starting from $(\alpha, \psi_i^0)$ in the negative direction approaches $(0,0)$ as $x\to-\infty$, and lies above the $\phi$-axis. It thus generates a $\phi_i$ satisfying (II)$_i$ and
 \[
 0<\phi_i<1,\; \phi_i'>0,\; \phi_i(0)=\alpha,\; \phi_i'(0)=\psi_i^0,\;  \phi_i(-\infty)=0.
 \]

 Now $(\phi_1, \phi_2, \phi_3)$ clearly satisfies (II)$_1$-(II)$_4$, and
 \begin{equation}\label{001}
 0<\phi_i<1,\; \phi_i'>0 \mbox{ for } i=1,2,3,\; \phi_1(-\infty)=\phi_2(-\infty)=0,\; \phi_3(+\infty)=1.
 \end{equation}
 Thus \eqref{ss-3-up} has a solution satisfying \eqref{001} for every $\alpha\in (0, 1)$. Moreover, since there is a continuum of positive constants $\psi_1^0$ and $\psi^0_2$ satisfying
 \eqref{phi12}, and each such pair gives rise to a pair $\phi_1$ and $\phi_2$ as described above, we see that \eqref{ss-3-up} has a continuum of solutions satisfying \eqref{001} for each $\alpha\in (0,1)$. Let us note from the above argument that the component $\phi_L$ in $(\phi_{U_1}, \phi_{U_2}, \phi_L)$ is uniquely determined for each $\alpha\in (0, 1)$.

To prove \eqref{slow}, we note that  there is a continuum of pairs $\psi_1^0$ and $\psi_2^0$ satisfying \eqref{phi12} and for $i=1,2$, $\psi_i^0<\psi_i^*$. Thus
 $(\alpha, \psi_i^0)$ lies below $\Gamma_i^*$, and
 the trajectory $\Gamma_{\psi_i^0}$ is tangent to $\ell^-_i$ at $(0, 0)$, which yields
 the asymptotic behavior for $\phi_{U_i}(x)=\Phi_{U_i}(\beta_{U_i} x)$ described in \eqref{slow}, which can be proved by the argument used in the proof of conclusion (iv) in Theorem \ref{prop:2-bra}.

The above constructed solutions of \eqref{ss-3-up} do not exhaust all the possible solutions. There are solutions of a different kind, which we now construct. For $i=1,2$, let
 \[
 \psi=\Psi_i^*(\phi),\; \psi=\Psi_i^-(\phi),\; \psi=\Psi_3(\phi)
 \]
 be the equations of the trajectories $\Gamma^*_i, \Gamma_i^-$ and $H_3$, respectively. Then define
 \[
 \tilde \Psi_1^*(\phi):=\xi_1\Psi^*_1(\phi)+\xi_2\Psi_2^-(\phi),\; \tilde \Psi_2^*(\phi):=\xi_1\Psi_1^-(\phi)+\xi_2\Psi_2^*(\phi).
 \]
 Clearly
 \[
 \tilde\Psi^*_i(0)<0=\Psi_3(0),\; \tilde \Psi^*_i(1)>0=\Psi_3(1),\; i=1,2.
 \]
 Therefore, for $i=1,2$,
 \[
 \Sigma_i:=\{\alpha\in (0, 1): \tilde\Psi^*(\alpha)\geq \Psi_3(\alpha)\}\not=\emptyset,
 \]
 and there exist $\alpha^*_{i,1}, \alpha^*_{i,2}\in (0,1)$ such that
 $
 \alpha^*_{i,1}\leq \alpha^*_{i,2}$ and
 \[
 \tilde \Psi^*_i(\alpha^*_{i,1})=\Psi_3(\alpha^*_{i,1}),\; \tilde\Psi^*_i(\phi)<\Psi_3(\phi) \mbox{ for } \phi\in (0, \alpha^*_{i,1}),
 \]
 \[
 \tilde \Psi^*_i(\alpha^*_{i,2})=\Psi_3(\alpha^*_{i,2}),\; \tilde\Psi^*_i(\phi)>\Psi_3(\phi) \mbox{ for } \phi\in (\alpha^*_{i,2}, 1).
 \]
 Then clearly
  \[
   (0, \alpha^*_{i,1})\cap\Sigma_i=\emptyset,\; (\alpha^*_{i,2}, 1)\subset\Sigma_i.
 \]

 Fix $i\in \{1,2\}$ and $\alpha\in \Sigma_i$. The trajectory $\Gamma_i^-$ starting from $(\alpha, \Psi_i^-(\alpha))$ in its negative direction generates a $\phi_i(x)$ satisfying (II)$_i$ and
 \[
 \phi_i'<0,\; \phi_i(-\infty)=1,\; \phi_i(0)=\alpha, \;\phi_i'(0)=\Psi^-_i(\alpha).
 \]
 The trajectory $H_3$ starting from $(\alpha, \Psi_3(\alpha))$ in its positive direction generates a $\phi_3(x)$ satisfying (II)$_3$ and
 \[
 \phi_3'>0,\; \phi_3(+\infty)=1,\; \phi_3(0)=\alpha, \;\phi_3'(0)=\Psi_3(\alpha).
 \]

 Denote $j=3-i$ and set
 \[
 \psi_j^0:=\xi_j^{-1}[\Psi_3(\alpha)-\xi_i\Psi_i^-(\alpha)].
 \]
 Using $\alpha\in\Sigma_i$ we easily deduce $0<\psi_j^0\leq \Psi_j^*(\alpha)$. Let $\Gamma_{\psi_j^0}$ denote the trajectory of \eqref{phi-psi} with $\mu=\mu_j$
 passing through $(\alpha, \psi_j^0)$. Let $\phi_j(x)$ be generated by $\Gamma_{\psi_j^0}$ in its negative direction starting from $(\alpha, \psi_j^0)$; then the phase plane result indicates that $\phi_j$ satisfies (II)$_j$ and
 \[
 \phi_j'>0,\; \phi_j(-\infty)=0,\; \phi_j(0)=\alpha, \;\phi_j'(0)=\psi_j^0.
 \]
 We thus have
 \[
 \phi_3'(0)=\xi_i\phi_i'(0)+\xi_j\phi_j'(0),
 \]
 and thus $(\phi_1, \phi_2,\phi_3)$ solves \eqref{SS-3-up} with $\alpha\in\Sigma_i$, and
 \begin{equation}\label{011}
  \phi_i'<0,\; \phi_i(-\infty)=1,\; \phi_j'>0,\; \phi_j(-\infty)=0,\; \phi_3'>0,\; \phi_3(+\infty)=1.
  \end{equation}

  Conversely, suppose that $(\Phi_{U_1},\Phi_{U_2}, \Phi_L)=(\phi_1, \phi_2,\phi_3)$ is a solution of \eqref{SS-3-up}.
  Then by the phase plane result, it is easily seen that $\phi_3$ has to be generated by $\Gamma_3^+$. If one of $\phi_1'(0)$ and $\phi_2'(0)$ is negative, say $\phi_i'(0)<0$, then by the phase plane result,  $\phi_i$ has to be generated by $\Gamma_i^-$,
  and $\phi_j'(0)$ must be no bigger than $\Phi_j^*(\alpha)$. Therefore necessarily
   $\alpha\in \Sigma_i$, and $(\phi_1, \phi_2,\phi_3)$ coincides with the solution constructed above. If both $\phi_1'(0)$ and $\phi_2'(0)$ are positive, then we are back to the situation considered earlier and $(\phi_1, \phi_2,\phi_3)$ coincides with a solution satisfying \eqref{001} constructed there.

We have now proved all the conclusions in the lemma. $\hfill\Box$

\medskip

\noindent
{\bf Proof of Lemma \ref{III}.} In this case, $(\Phi_{U_1}, \Phi_{U_2}, \Phi_L)$ solves \eqref{SS-3-up} with $\alpha\in (0,1)$ if and only if
\begin{itemize}
\item[(III)$_1$.] $\phi_1:=\Phi_{U_1}$ is a positive solution of
\eqref{phi-eq} in $(-\infty, 0]$ with $\mu=\mu_1:=\beta_{U_1}^{-2}\in (0, 1/4]$, and $0<\phi_1<1$,
\item[(III)$_2$.] $\phi_2:=\Phi_{U_2}$ is a positive solution of
\eqref{phi-eq} in $(-\infty, 0]$ with $\mu=\mu_2:=\beta_{U_2}^{-2}\in (0, 1/4]$, and $0<\phi_2<1$,
\item[(III)$_3$.] $\phi_3:=\Phi_L$ is a positive solution of
\eqref{phi-eq} in $[0,+\infty)$ with $\mu=\mu_3:=\beta_{L}^{-2}>1/4$, and $0<\phi_3<1$,
\item[(III)$_4$.] $\phi_1(0)=\phi_2(0)=\phi_3(0)=\alpha$, \; $\xi_1\phi_1'(0)+\xi_2\phi_2'(0)=\phi'_3(0)$.
\end{itemize}

We first note that if we replace the trajectory $H_3$ by $\Gamma_3^+$ in the above proof for Lemma \ref{II} {\bf b}, we immediately obtain the conclusions in  Lemma \ref{III} {\bf b}, which gives all the solutions of \eqref{ss-3-up} satisfying \eqref{01}.

Next we find all the other solutions of \eqref{ss-3-up}.
In the $\phi\psi$-plane, we consider the three trajectories $\Gamma^*_1,\Gamma^*_2$ and $\Gamma^+_3$, where as in the proof of Lemma \ref{II}, for $i=1,2,3$, we use
the subscript $i$ to denote the corresponding trajectory in the phase plane of \eqref{phi-psi} with $\mu=\mu_i$. Let
\[
\psi=\Psi_1^*(\phi),\; \psi=\Psi_2^*(\phi),\; \psi=\Psi_3(\phi)
\]
be the equations of $\Gamma_1^*$, $\Gamma_2^*$ and $\Gamma^+_3$, respectively. Define
\[
\Psi_*(\phi):=\xi_1\Psi^*_1(\phi)+\xi_2\Psi^*_2(\phi).
\]
Clearly
\[
\Psi_*(0)=0<\Psi_3(0),\;\; \Psi_*(1)>0=\Psi_3(1).
\]
Therefore
\[
\Sigma^*:=\{\alpha\in (0,1): \Psi_*(\alpha)>\Psi_3(\alpha)\}\not=\emptyset,
 \]
 and
  there exist $\alpha_1^*, \alpha^*_2\in (0, 1)$ satisfying $\alpha_1^*\leq \alpha^*_2$ and
 \[
 \Psi_*(\alpha_1^*)=\Psi_3(\alpha_1^*),\; \Psi_*(\alpha)<\Psi_3(\alpha) \mbox{ for } \phi\in (0, \alpha_1^*),
 \]
  \[
 \Psi_*(\alpha_2^*)=\Psi_3(\alpha_2^*),\; \Psi_*(\alpha)>\Psi_3(\alpha) \mbox{ for } \phi\in (\alpha_2^*, 1).
 \]
 Clearly
 \[
  (0, \alpha^*_{1})\cap\Sigma^*=\emptyset,\; (\alpha^*_{2}, 1)\subset\Sigma^*.
 \]
 Since $\Gamma_1^-$ and $\Gamma_2^-$ are below the $\phi$-axis and $\Gamma_1^*$ and $\Gamma^*_2$ are above the $\phi$-axis in the $\phi\psi$-plane,
 we easily see that
 \begin{equation}\label{*<*}
 \hat\alpha^*_{i,1}>\alpha^*_1,
\;\hat\alpha^*_{i,2}>\alpha^*_2 \mbox{ for } i=1,2.
 \end{equation}

 For $\alpha=\alpha_j^*$, $ j=1,2$, clearly the trajectory $\Gamma_3^+$ starting from $(\alpha_j^*, \Psi_3(\alpha_j^*))$ in its positive direction generates a $\phi_3$ satisfying
 (III)$_3$ and
 \[
 \phi_3'>0,\; \phi_3(0)=\alpha_j^*,\; \phi_3'(0)=\Psi_3(\alpha_j^*),\;  \phi_3(+\infty)=1.
 \]

 For $i=1,2$, starting from $(\alpha_j^*, \Psi_i(\alpha_j^*))$ in the negative direction, the trajectory $\Gamma_i^*$ generates a $\phi_i$ satisfying (III)$_i$ and
 \[
 \phi_i'>0,\; \phi_i(0)=\alpha_j^*,\; \phi_i(-\infty)=0,\; \xi_1\phi_1'(0)+\xi_2\phi'_2(0)=\xi_1\Psi_1^*(\alpha_j^*)+\xi_2\Psi^*_2(\alpha_j^*)=\Psi_3(\alpha_j^*)=\phi_3'(0).
 \]
 Thus $\phi_1, \phi_2,\phi_3$ satisfy (III)$_1$-(III)$_4$ and \eqref{001}.
 This proves that for $\alpha=\alpha_j^*$, \eqref{ss-3-up} has a solution satisfying \eqref{001}. Moreover, since $\Gamma_i^*$ is tangent to $\ell^+$ at $(0,0)$,
 we can prove \eqref{fast} the same way as in the proof of conclusion (iv) in Theorem \ref{prop:2-bra}.

 To show the solution is unique,
 suppose  $(\Phi_{U_1}, \Phi_{U_2}, \Phi_L)$ is an arbitrary solution of \eqref{SS-3-up} with
  $\alpha=\alpha_j^*$. Then $(\Phi_L(x), \Phi_L'(x)): x\geq 0\}$ is a trajectory of \eqref{phi-psi} with $\mu=\mu_3>1/4$
  which does not intersects the lines $\phi=0$ and $\phi=1$ in any finite time $x>0$.
  By the phase plane analysis in subsection 5.1 for case (a), necessarily this trajectory is part of $\Gamma_3^+$, and thus
  $(\Phi_L(0), \Phi'_L(0))=
  (\alpha_j^*, \Psi_3(\alpha_j^*))$. This and \eqref{*<*} imply that $(\Phi_{U_1}, \Phi_{U_2}, \Phi_L)$ coincides with $(\phi_1, \phi_2,\phi_3)$ constructed above. The uniqueness conclusion is thus proved.

 We next consider the  case $\alpha\in (0, \alpha_1^*)$.
 Then
 \[
 \xi_1\Psi^*_1(\alpha)+\xi_2\Psi^*_2(\alpha)=\Psi_*(\alpha)<\Psi_3(\alpha).
 \]
 Suppose for contradiction that \eqref{SS-3-up} has a solution $(\Phi_{U_1}, \Phi_{U_2}, \Phi_L)$ for some
  $\alpha\in (0, \alpha_1^*)$. Then $\Phi_L(x)$ generates a trajectory of \eqref{phi-psi} with $\mu=\mu_3>1/4$ which does not intersects the lines $\phi=0$ and $\phi=1$ in any finite time $x>0$. By the phase plane analysis in subsection 5.1 for case (a), necessarily this trajectory is part of $\Gamma_3^+$, and so $(\Phi_L(0), \Phi'_L(0))=
  (\alpha, \Psi_3(\alpha))$. It follows that, for $i=1,2$,  $\Phi_{U_i}$ is generated by the trajectory of \eqref{phi-psi} with $\mu=\mu_i\in (0, 1/4]$ in the negative direction starting from some point $(\alpha, \psi_i^0)$ with $\psi_i^0>0$ and
 \[
 \xi_1\psi_1^0+\xi_2\psi_2^0=\Psi_3(\alpha)
 >\xi_1\Psi^*_1(\alpha)+\xi_2\Psi^*_2(\alpha).
 \]
 Therefore either $\psi_1^0>\Psi^*_1(\alpha)$ or $\psi_2^0>\Psi^*_2(\alpha)$ holds. For definiteness, we assume the former holds. Then the point
 $(\alpha, \psi_1^0)$ is above the trajectory $\Gamma_1^*$ in the $\phi\psi$-plane, and by the phase plane result in subsection 5.1 for cases (b) and (c), the trajectory
 of \eqref{phi-psi} with $\mu=\mu_1$, starting from $(\alpha, \psi_1^0)$  in its negative direction, intersects the line $\phi=0$ at some finite time $x$, which implies that $\Phi_{U_1}(x)=0$ for some finite $x<0$, a contradiction to $\Phi_{U_1}(x)>0$ for $x<0$.
 Therefore \eqref{ss-3-up} has no solution for $\alpha\in (0, \alpha_1^*)$.

We next consider the general case $\alpha\in \Sigma^*$. If $\Psi_*(\alpha)=\Psi_3(\alpha)$ then we can obtain a unique solution of \eqref{ss-3-up}
 satisfying \eqref{001} in the same way as for the case $\alpha\in\{\alpha^*_1, \alpha^*_2\}$. Suppose next
 \[
 \Psi_*(\alpha)>\Psi_3(\alpha).
 \]
This is the case when, for example, $\alpha\in (\alpha_2^*, 1)$. The trajectory $\Gamma^+_3$ starting from the point $(\alpha, \Psi_3(\alpha))$ in its positive direction clearly generates a $\phi_3$ satisying (III)$_3$ with
\[\mbox{$\phi_3(0)=\alpha,\; \phi_3'(0)=\Psi_3(\alpha),\; \phi_3(+\infty)=1$ and $\phi'_3>0$.}
\]
 Moreover,
$(\alpha, \Psi_3(\alpha))$ is the only point on the line $\phi=\alpha$ such that the trajectory of \eqref{phi-psi} with $\mu=\mu_3$ passing through it
 generates a solution $\phi$ of \eqref{phi-eq} satisfying $\phi(x)\in (0, 1)$ for $x\geq 0$.

 By the definition of $\alpha^*_2$ we now have
 \[
 \xi_1\Psi^*_1(\alpha)+\xi_2\Psi^*_2(\alpha)=\Psi_*(\alpha)>\Psi_3(\alpha).
 \]
 Therefore there is a continuum of positive constants $\psi_1^0$ and $\psi^0_2$ satisfying
 \[
 \xi_1\psi^0_1+\xi_2\psi^0_2=\Psi_3(\alpha),\; \psi^0_1\leq \Psi^*_1(\alpha),\; \psi^0_2\leq \Psi^*_2(\alpha).
 \]
 For each such pair $\psi_1^0, \psi_2^0$, the trajectory $\Gamma_{\psi^0_i}$ of \eqref{phi-psi} with $\mu=\mu_i$ starting from $(\alpha, \psi_i^0)$ in the negative direction approaches $(0,0)$ as $x\to-\infty$, and lies above the $\phi$-axis, for $i=1,2$. It thus generates a $\phi_i$ satisfying (III)$_i$ and
 \[
 0<\phi_i<1,\; \phi_i'>0,\; \phi_i(0)=\alpha,\; \phi_i'(0)=\psi_i^0,\; \phi_i(-\infty)=0,\; i=1,2.
 \]
 Now $(\phi_1, \phi_2, \phi_3)$ clearly satisfies (III)$_1$-(III)$_4$ and \eqref{001}.

 Since there is a continuum of positive constants $\psi_1^0$ and $\psi^0_2$ for use in the above process to generate $\phi_1$ and $\phi_2$, we see that \eqref{ss-3-up} has a continuum of solutions satisfying \eqref{001} for each $\alpha\in (\alpha_2^*,1)$. Let us note from the above argument that the
 component $\phi_L$ in $(\phi_{U_1}, \phi_{U_2}, \phi_L)$ is uniquely determined for each $\alpha$. Let us also note that due to
 $\xi_1\Psi^*_1(\alpha)+\xi_2\Psi^*_2(\alpha)>\Psi_3(\alpha)$, in the choice of $\psi_1^0$ and $\psi_2^0$,  at least one of the two inequalities in
 \[
 \psi^0_1\leq \Psi^*_1(\alpha),\; \psi^0_2\leq \Psi^*_2(\alpha)
 \]
 must be a strict inequality, which implies that either $(\alpha, \psi_1^0)$ lies below $\Gamma_1^*$ or $(\alpha, \psi_2^0)$ lies below $\Gamma_2^*$, and hence
 either the trajectory $\Gamma_{\psi_1^0}$ is tangent to $\ell^-_1$ at $(0, 0)$, or the trajectory $\Gamma_{\psi_2^0}$ is tangent to $\ell_2^-$ at $(0, 0)$, which yields
 the asymptotic behavior for $\phi_{U_1}$ and $\phi_{U_2}$ described in \eqref{slow}, which can be proved by the argument used in the proof of conclusion (iv) in Theorem \ref{prop:2-bra}.

 Finally we note that
 if $(\Phi_{U_1}, \Phi_{U_2}, \Phi_L)$ is a solution of \eqref{SS-3-up} with $\alpha\in (0, 1)$, then the phase plane results indicate that  $\Phi_L$ must agree with $\phi_3$ above, and $\Phi_{U_i}$ has to be generated as $\phi_i$ above for $i=1,2$, which implies $\alpha\in\Sigma^*$, except that we alow one of $\psi_1^0$ and $\psi_2^0$ negative; say $\psi_1^0<0$.
 Then necessarily $\psi_i^0=\Psi_3^-(\alpha)$ and thus we are back to the situation described in part {\bf b} above.

All the conclusions in the lemma are now proved. $\hfill\Box$

\medskip

\noindent
{\bf Proof of Lemma \ref{IV}.} Without loss of generality, we may assume that $0<\beta_{U_1}<2\leq \beta_{U_2}$.

We first consider the case $\beta_L<2$.
Define $\mu_i=\beta_{U_i}^{-2}$ for $i=1,2$, and $\mu_3=\beta_L^{-2}$. Clearly $\mu_1, \mu_3>1/4$ and $\mu_2\in (0, 1/4]$.  The important trajectories in our analysis of this case are $\Gamma_1^-,\; \Gamma_2^*$ and $\Gamma_3^+$. Let
\[
\psi=\Psi_1(\phi),\; \psi=\Psi_2(\phi),\;\psi=\Psi_3(\phi)
\]
be the equations of $\Gamma_1^-,\; \Gamma_2^*$ and $\Gamma_3^+$ in the $\phi\psi$-plane, respectively.

Define $\Psi(\phi):=\xi_1\Psi_1(\phi)+\xi_2\Psi_2(\phi)$. Clearly
\[
\Psi(0)<0<\Psi_3(0),\; \Psi(1)>0=\Psi_3(0).
\]
Therefore there exist $\alpha_1^{**}, \alpha^{**}_2\in (0, 1)$ satisfying $\alpha_1^{**}\leq \alpha^{**}_2$ and
 \[
 \Psi(\alpha_1^{**})=\Psi_3(\alpha_1^{**}),\; \Psi(\alpha)<\Psi_3(\alpha) \mbox{ for } \phi\in (0, \alpha_1^{**}),
 \]
  \[
 \Psi(\alpha_2^{**})=\Psi_3(\alpha_2^{**}),\; \Psi(\alpha)>\Psi_3(\alpha) \mbox{ for } \phi\in (\alpha_2^{**}, 1).
 \]

 We show next that \eqref{SS-3-up} has no solution for $\alpha\in (0,\alpha_1^{**})$.
 Suppose for contradiction that $(\Phi_{U_1},\Phi_{U_2}, \Phi_L)=(\phi_1, \phi_2,\phi_3)$ is
 a solution of \eqref{SS-3-up} for some  $\alpha\in (0,\alpha_1^{**})$. Then $\{(\phi_1(x),\phi_1'(x)): x\leq 0\}$
 is a trajectory of \eqref{phi-psi} with $\mu=\mu_1>1/4$ which stays in $D$ for all $x<0$.
 By the phase plane result for case (a) necessarily it is part of $\Gamma_1^-$. Hence
 $\phi_1'(0)=\Psi_1(\alpha)$.

 Next we look at $\{(\phi_3(x), \phi_3'(x)): x\geq 0\}$, which forms a trajectory of \eqref{phi-psi} with $\mu=\mu_3>1/4$ staying in $D$ for all $x\geq 0$.
 By the phase plane result for case (a) necessarily it is part of $\Gamma_3^+$. Hence
 $\phi_1'(0)=\Psi_3(\alpha)$.

 Using $\phi_3'(0)=\xi_1\phi_1'(0)+\xi_2\phi_2'(0)$  and $\Psi_3(\alpha)>\Psi(\alpha)$, we thus obtain
 \[
 \xi_2\phi_2'(0)=\Psi_3(\alpha)-\xi_1\Psi_1(\alpha)>\xi_2\Psi_2(\alpha).
 \]
 This implies that $(\alpha, \phi_2'(0))$ lies above $\Gamma_2^*$ in the $\phi\psi$-plane. Therefore $\{(\phi_2(s), \phi_2'(x)): x\leq 0\}$ is a trajectory
 of \eqref{phi-psi} with $\mu=\mu_2\in (0, 1/4]$ starting from $(\alpha, \phi_2'(0))$ moving in its negative direction as $x$ decreases from $0$. By the phase plane result for cases (b) and (c), due to  $(\alpha, \phi_2'(0))$ lying above $\Gamma_2^*$, this trajectory intersects the line $\phi=0$ at some finite $x<0$, a contradiction to
 $\phi_2(x)>0$ for all $x<0$. Hence \eqref{ss-3-up} has no solution for $\alpha\in (0, \alpha_1^{**})$.

 We now consider the case $\alpha=\alpha_j^{**}$, $j=1,2$. The trajectory $\Gamma_1^-$ of \eqref{phi-psi} with $\mu=\mu_1$ starting from $(\alpha_j^{**}, \Psi_1(\alpha_j^{**}))$
 in its negative direction gives rise to a function $\phi_1(x)$ satisfying \eqref{phi-eq} with $\mu=\mu_1$ for $x\leq 0$ and
 \[
 \phi_1(0)=\alpha_j^{**},\; \phi_1'(0)=\Psi_1(\alpha_j^{**}),\; \phi_1'<0,\; \phi_1(-\infty)=1.
 \]
 The trajectory $\Gamma_2^*$ starting from $(\alpha_j^{**}, \Psi_2(\alpha_j^{**}))$
 in its negative direction gives rise to a function $\phi_2(x)$ satisfying \eqref{phi-eq} with $\mu=\mu_2$ for $x\leq 0$ and
 \[
  \phi_2(0)=\alpha_j^{**},\; \phi_2'(0)=\Psi_2(\alpha_j^{**}),\;\phi_2'>0,\; \phi_2(-\infty)=0.
 \]
  The trajectory $\Gamma_3^+$ starting from $(\alpha_j^{**}, \Psi_3(\alpha_j^{**}))$
 in its positive direction gives rise to a function $\phi_3(x)$ satisfying \eqref{phi-eq} with $\mu=\mu_3$ for $x\geq 0$ and
 \[
  \phi_3(0)=\alpha_j^{**},\; \phi_3'(0)=\Psi_3(\alpha^{**}),\;\phi_3'>0,\; \phi_3(+\infty)=1.
 \]
 By the definition of $\alpha_j^{**}$ we find $\phi_3'(0)=\xi_1\phi_1'(0)+\xi_2\phi_2'(0)$. Therefore $(\Phi_{U_1}, \Phi_{U_2}, \Phi_L):=(\phi_1, \phi_2, \phi_3)$ is
 a solution of \eqref{SS-3-up} with $\alpha=\alpha_j^{**}$. The uniqueness of this solution is easily checked as before. Since $\Gamma_2^*$ is tangent to $\ell_2^+$
 at $(0,0)$,
 the behavior of $\phi_2(x)$ as $x\to-\infty$ can be precisely determined, which yields the desired behavior of $\Phi_{U_2}(x)$ as $x\to-\infty$.

 Next we consider the case $\alpha\in (\alpha_2^{**}, 1)$. The trajectory $\Gamma^+_3$ starting from the point $(\alpha, \Psi_3(\alpha))$ in its positive direction clearly generates a $\phi_3$ satisying  \eqref{phi-eq} with $\mu=\mu_3$ for $x\geq 0$ and
 \[
  \phi_3(0)=\alpha,\; \phi_3'(0)=\Psi_3(\alpha),\;\phi_3'>0,\; \phi_3(+\infty)=1.
 \]
 Moreover,
$(\alpha, \Psi_3(\alpha))$ is the only point on the line $\phi=\alpha$ such that the trajectory of \eqref{phi-psi} with $\mu=\mu_3$ passing through it
 generates a solution $\phi$ of \eqref{phi-eq} satisfying $\phi(x)\in (0, 1)$ for $x\geq 0$.

 The trajectory $\Gamma_1^-$ of \eqref{phi-psi} with $\mu=\mu_1$ starting from $(\alpha, \Psi_1(\alpha))$
 in its negative direction gives rise to a function $\phi_1(x)$ satisfying \eqref{phi-eq} with $\mu=\mu_1$ for $x\leq 0$ and
 \[
 \phi_1(0)=\alpha,\; \phi_1'(0)=\Psi_1(\alpha),\; \phi_1'<0,\; \phi_1(-\infty)=1.
 \]

 Since
 $\Psi_3(\alpha)<\xi_1\Psi_1(\alpha)+\xi_2\Psi_2(\alpha)$ and $\Psi_1(\alpha)<0$,
 \[
 \psi_2^0:=\xi_2^{-1}[\Psi_3(\alpha)-\xi_1\Psi_1(\alpha)]\in (0, \Psi_2(\alpha)),
 \]
  and hence the trajectory $\Gamma_{\psi^0_2}$ of \eqref{phi-psi} with $\mu=\mu_2$ starting from $(\alpha, \psi_2^0)$ in the negative direction approaches $(0,0)$ as $x\to-\infty$, and lies above the $\phi$-axis. It thus generates a $\phi_2(x)$ satisfying \eqref{phi-eq} with $\mu=\mu_2$ for $x\leq 0$ and
 \[
 0<\phi_2<1,\; \phi_2'>0,\; \phi_2(0)=\alpha,\; \phi_2'(0)=\xi_1\psi_2^0,\; \phi_2(-\infty)=0.
 \]
 Now $(\phi_1, \phi_2, \phi_3)$ clearly satisfies \eqref{SS-3-up} with $\alpha\in (\alpha^{**}_2, 1)$, and
 \[
 0<\phi_i<1,\;   \mbox{ for } i=1,2,3,\; \phi_1'<0, \phi_2'>0, \phi_3'>0, \phi_1(-\infty)=\phi_3(+\infty)=1,\; \phi_2(-\infty)=0.
 \]
 Conversely, if
  $(\Phi_{U_1}, \Phi_{U_2}, \Phi_L)$ is a solution of \eqref{SS-3-up} for some $\alpha\in (\alpha_2^{**}, 1)$, then using the phase plane results of subsection 5.1, it is easily seen that necessarily $(\Phi_{U_1}, \Phi_{U_2}, \Phi_L)$ coincides with $(\phi_1, \phi_2,\phi_3)$ constructed above. This proves the uniqueness conclusion.
 Since $(\alpha, \psi_2^0)$ lies below $\Gamma_2^*$, the trajectory $\Gamma_{\psi_2^0}$ is tangent to $\ell_2^-$ at $(0, 0)$, which yields
 the asymptotic behavior for $\phi_{U_2}$  when $\alpha\in (\alpha_2^{**}, 1)$.

 It remains to check the case $\beta_L\geq 2$. This time we replace $\Gamma_3^+$ in the above argument by $H_3$ and the analysis carries over  without extra difficulties.

   The proof of the lemma is now complete. $\hfill\Box$

\subsection{Solutions of \eqref{ss-1-2}}

As before, if we set
\[
\Phi_U(x):=\phi_U(\beta_U^{-1}x),\; \Phi_{L_1}(x):=\phi_{L_1}(\beta_{L_1}^{-1}x),\; \Phi_{L_2}(x):=\phi_{L_2}(\beta_{L_2}x),
\]
then \eqref{ss-1-2} is reduced to
\begin{equation}\label{SS-1-2}
\left\{
\begin{array}{ll}
- \Phi_{L_1}'' +\Phi_{L_1}'= \beta_{L_1}^{-2}(\Phi_{L_1} - \Phi^2_{L_1}),\;  0<\Phi_{L_1}<1, & x\in \mathbb{R}_{L_1} := (0, +\infty),\\
- \Phi_{L_2}'' +\Phi_{L_2}' = \beta_{L_2}^{-2}(\Phi_{L_2} - \Phi^2_{L_2}), \;  0<\Phi_{L_2}<1, & x\in \mathbb{R}_{L_2} := (0, +\infty),\\
- \Phi_U'' +\Phi_U' = \beta_{U}^{-2}(\Phi_U - \Phi^2_U),\; \;\;\;\;\; 0<\Phi_U<1, & x\in \mathbb{R}_U := (-\infty, 0),\\
 \Phi_{L_1}(0) = \Phi_{L_2}(0) = \Phi_U (0) =\alpha\in (0,1), & \\
\eta_1\Phi_{L_1}'(0) + \eta_2\Phi_{L_2}' (0) = \Phi_U'(0), &
\end{array}
\right.
\end{equation}
where
\[
\eta_1:=\frac{a_{L_1}\beta_{L_1}}{a_U\beta_U},\; \eta_2:=\frac{a_{L_2}\beta_{L_2}}{a_U\beta_U}\; \mbox{ and hence } \eta_1+\eta_2=1.
\]

We now prove a slightly weaker version of Theorem \ref{ull} by further developing the phase plane approach of the previous subsection and applying it to
 \eqref{SS-1-2}.

\begin{lem}\label{prop:1-2-weak} The following assertions hold for \eqref{ss-1-2}:

\begin{itemize}
\item[\rm(I)] If $\beta_{U} \in (0, 2)$, then \eqref{ss-1-2} has no solution.

\item[\rm(II)]  If $\beta_U, \beta_{L_1}, \beta_{L_2} \geq  2$, then   \eqref{ss-1-2} has a unique solution for every $\alpha\in (0, 1)$.

\item[\rm(III)] If $\beta_{U} \geq  2> \min\{\beta_{L_1},\beta_{L_2}\}>0 $, then there exist $\alpha^*_1, \alpha_2^* \in (0,1)$ with $\alpha^*_1\leq \alpha_2^*$
 such that \eqref{ss-1-2} has a
unique solution for each $\alpha\in \{\alpha_1^*\}\cup [\alpha_2^{*}, 1)$,  and has no solution for $\alpha
\in (0, \alpha_1^{*})$.

 \item[\rm(IV)] Whenever \eqref{ss-1-2} has a solution $(\phi_{L_1}, \phi_{L_2},\phi_U)$, we have
 \[
 \phi_{L_1}'>0,\;\phi_{L_2}'>0,\; \phi_U'>0,\; \phi_{L_1}(+\infty)=\phi_{L_2}(+\infty)=1,\; \phi_U(-\infty)=0.
 \]
 Moreover,  in case {\rm (III)}, as $x\to-\infty$,
  \begin{equation}\label{phiU}\left\{
  \begin{aligned}
  \phi_{U}(x)&=(c+o(1))e^{\frac{1}{2}(\beta_{U}+\sqrt{\beta_{U}^2-4}\,)x}\;  \mbox{ for some $c=c(\alpha)>0$ when $\alpha\in\{\alpha_1^{*},\alpha_2^*\}$,}\\
   \phi_{U}(x)&=(c+o(1))e^{\frac{1}{2}(\beta_{U}-\sqrt{\beta_{U}^2-4}\,)x}\;  \mbox{ for some $c=c(\alpha)>0$ when $\alpha\in(\alpha_2^*, 1).$}
  \end{aligned} \right.
  \end{equation}

\end{itemize}
\end{lem}

We can actually show that $\alpha_1^*=\alpha_2^*$; see Remark \ref{uniq}.

\begin{proof} We will work with the equivalent problem \eqref{SS-1-2}.
We define
\[
\mu_1:=\beta_{L_1}^{-2}, \;
\mu_2:=\beta_{L_2}^{-2},\; \mu_3:=\beta_U^{-2}.
\]

{\bf Case \rm(I).} Suppose by way of contradiction that \eqref{SS-1-2} has a solution $(\Phi_{L_1},\Phi_{L_2}, \Phi_U)$ for some $\alpha\in (0, 1)$. Since $\beta_U<2$, the same consideration as in case (i) of Theorem \ref{prop:2-bra} shows that
\[
\Gamma_{U}:=\{(\Phi_{U}(x), \Phi_{U}'(x)): x\leq 0\}
\]
is part of the trajectory $\Gamma^-_3$ of \eqref{phi-psi} with $\mu=\mu_3=\beta_{U}^{-2}$, where the subscript $3$ in $\Gamma^-_3$ indicates the special trejectory
$\Gamma^-$ of \eqref{phi-psi} with $\mu=\mu_3$. Therefore $\Phi'_{U}(0)<0$. It follows that
\[
\eta_1\Phi_{L_1}'(0)+\eta_2\Phi_{L_2}'(0)=\Phi_U'(0)<0.
\]
Therefore at least one of $\Phi_{L_1}'(0)$ and $\Phi_{L_2}'(0)$ is negative. For definiteness, we assume that $\Phi_{L_1}'(0)$ is negative.
Now $\Gamma_{L_1}:=\{(\Phi_{L_1}(x), \Phi'_{L_1}(x)): x\geq 0\}$ is the trajectory of \eqref{phi-psi} with $\mu=\mu_1=\beta_{L_1}^{-2}$ starting from $(\alpha, \Phi_{L_1}'(0))$ moving in the positive direction as $x$ increases from 0. As $(\alpha, \Phi'_{L_1}(0))$ is below the positive $\phi$-axis, by the phase plane result in subsection 5.1 for cases (a)-(c), $\Gamma_{L_1}$ must intersects the line $\phi=0$ at some finite $x>0$, a contradiction to $\Phi_{L_1}(x)>0$ for all $x>0$.
 This contradiction completes the proof for Case (I).

{\bf Case (II).}  The trajectories $H_1, H_2$ and $\Gamma_3^*$ are important in this case, where $\mu_i\in (0, 1/4]$ for $i=1,2,3$.
Let us recall that in the $\phi\psi$-plane,  $H_i$ is below the line $\ell^-_i$ for $i=1,2$,  and $\Gamma_3^*$ is above the
line $\ell_3^+$.
Let
\[
\psi=\Psi_1(\phi),\; \psi=\Psi_2(\phi),\;\psi=\Psi_3(\phi)
\]
be the equations of $H_1, H_2, \Gamma_3^*$ in the $\phi\psi$-plane, respectively.
Then the above facts imply
\[
\begin{aligned}
\Psi_i(\phi)&<\frac 12(1-\sqrt{1-4\mu_i}\,)\phi\leq \frac 12 \phi&& \mbox{ for } \phi\in (0, 1),\; i=1,2,
\\
\Psi_3(\phi)&>\frac 12(1+\sqrt{1-4\mu_3}\,)\phi\geq \frac 12 \phi &&\mbox{ for } \phi\in (0, 1).
\end{aligned}
\]
Therefore
\[
\Psi(\phi):=\eta_1\Psi_1(\phi)+\eta_2\Psi_2(\phi)<\frac 12 \phi<\Psi_3(\phi) \mbox{ for } \phi\in (0, 1).
\]

Fix $\alpha\in (0, 1)$. For $i=1,2 $ we now consider the point $(\alpha, \Psi_i(\alpha))\in H_i$. The trajectory $H_i$ from $(\alpha, \Psi_i(\alpha))$ in its positive direction
yields a solution $\phi_i(x)$ ($x\geq 0$) of \eqref{phi-eq} with $\mu=\mu_i$ satisfying
\[
\phi_i(0)=\alpha,\; \phi_i'(0)=\Psi_i(\alpha),\; \phi_i'>0,\; \phi_i(+\infty)=1, \; i=1,2.
\]
Since
\[
\psi_3^0:=\eta_1\phi_1'(0)+\eta_2\phi_2'(0)=\Psi(\alpha)<\Psi_3(\alpha),
\]
the point $(\alpha, \psi_3^0)$ lies below $\Gamma_3^*$ and is above the $\phi$-axis in the $\phi\psi$-plane. Thus by our phase plane result in subsection 5.1 for
cases (b) and (c), the trajectory of \eqref{phi-psi} with $\mu=\mu_3\in (0, 1/4]$ passing through $(\alpha, \psi_3^0)$ converges to $(0, 0)$ in its negative direction, and stays above the $\phi$-axis. This trajectory from $(\alpha, \psi_3^0)$ in the negative direction thus yields a solution $\phi_3(x)$ $(x\leq 0)$ for \eqref{phi-eq} with $\mu=\mu_3$, and it further satisfies
\[
\phi_3(0)=\alpha=\phi_1(0)=\phi_2(0),\; \phi'_3(0)=\psi_3^0=\eta_1\phi_1'(0)+\eta_2\phi_2'(0),\; \phi_3'>0,\; \phi_3(-\infty)=0.
\]
Thus $(\Phi_{L_1}, \Phi_{L_2}, \Phi_U):=(\phi_1, \phi_2, \phi_3)$ is a solution of \eqref{SS-1-2} with the above fixed $\alpha$, and it satisfies further
\[
\phi_i'>0 \mbox{ for } i=1,2,3, \;\; \phi_1(+\infty)=\phi_2(+\infty)=1,\; \phi_3(-\infty)=0.
\]
The uniqueness of this solution can be proved as before.

{\bf Case (III).}
Without loss of generality we assume $\beta_{L_1}<2$. There are two subcases:
\[
\mbox{ \rm {\bf (III-1):} $\beta_{L_2}<2$, \;\; {\bf  (III-2):}  $\beta_{L_2}\geq2$.}
\]

In case {\bf (III-1)}, the trajectories $\Gamma_1^+, \Gamma^+_2$ and $\Gamma_3^*$ are important for our analysis. Let
\[
\psi=\Psi_1(\phi),\; \psi=\Psi_2(\phi),\;\psi=\Psi_3(\phi)
\]
be the equations of $\Gamma^+_1, \Gamma^+_2, \Gamma_3^*$ in the $\phi\psi$-plane, respectively.
Then define
\[
\Psi(\phi):=\eta_1\Psi_1(\phi)+\eta_2\Psi_2(\phi).
\]
Clearly
\[
\Psi(0)>0=\Psi_3(0),\; \Psi(1)=0<\Psi_3(1).
\]
Therefore there exist $\alpha_1^{*}, \alpha_2^*\in (0, 1)$ with $\alpha_1^{*}\leq  \alpha_2^*$ such that
\begin{equation}\label{alpha*}\left\{
\begin{aligned}
\Psi(\phi)>\Psi_3(\phi) \mbox{ for  } \phi\in (0, \alpha_1^{*}),\; \Psi(\alpha_1^{*})=\Psi_3(\alpha_1^{*}),\\
\Psi(\phi)<\Psi_3(\phi) \mbox{ for  } \phi\in (\alpha_2^{*}, 1),\; \Psi(\alpha_2^{*})=\Psi_3(\alpha_2^{*}).
\end{aligned}
\right.
\end{equation}

For $i, j\in \{1,2\} $ we now consider the point $(\alpha_j^{*}, \Psi_i(\alpha_j^{*}))\in \Gamma^+_i$. The trajectory $\Gamma^+_i$ from $(\alpha_j^{*}, \Psi_i(\alpha_j^{*}))$ in its positive direction
yields a solution $\phi_i(x)$ ($x\geq 0$) of \eqref{phi-eq} with $\mu=\mu_i$ satisfying
\[
\phi_i(0)=\alpha_j^{*},\; \phi_i'(0)=\Psi_i(\alpha_j^{*}),\; \phi_i'>0,\; \phi_i(+\infty)=1, \; i=1,2.
\]
The trajectory $\Gamma_3^*$ from $(\alpha_j^{*}, \Psi_3(\alpha_j^{*}))$ in its negative direction
yields a solution $\phi_3(x)$ ($x\leq 0$) of \eqref{phi-eq} with $\mu=\mu_3$ satisfying
\[
\phi_3(0)=\alpha_j^{*},\; \phi_3'(0)=\Psi_3(\alpha_j^{*}),\; \phi_3'>0,\; \phi_3(-\infty)=0.
\]
By \eqref{alpha*}, $\phi_3'(0)=\eta_1\phi_1'(0)+\eta_2\phi'_2(0)$ and thus $(\Phi_{L_1}, \Phi_{L_2}, \Phi_U):=(\phi_1, \phi_2, \phi_3)$ is a solution of \eqref{SS-1-2} with $\alpha=\alpha_j^{*}$, and satisfies further
\[
\phi_i'>0 \mbox{ for } i=1,2,3, \;\; \phi_1(+\infty)=\phi_2(+\infty)=1,\; \phi_3(-\infty)=0.
\]

If $(\Phi_{L_1}, \Phi_{L_2}, \Phi_U)$ is any solution of \eqref{SS-1-2} with $\alpha=\alpha_j^{*}$, then for $i=1,2$, $\{(\Phi_{L_i}(x),\Phi_{L_i}'(x)): x\geq 0\}$ has to be part of $\Gamma^+_i$ as any other trejectory of \eqref{phi-psi} with $\mu=\mu_i$ intersects the lines $\phi=0$ or $\phi=1$ in the positive direction. This  implies that
$(\Phi_{L_1}, \Phi_{L_2}, \Phi_U)$ must agree with the above constructed $(\phi_1, \phi_2, \phi_3)$. We have thus proved the uniqueness.

Next we consider the case $\alpha\in (0, \alpha_1^{*})$. Suppose for contradiction that $(\Phi_{L_1}, \Phi_{L_2}, \Phi_U)$ is a solution of \eqref{SS-1-2}
for some  $\alpha\in (0, \alpha_1^{*})$. Then the same consideration as in the uniqueness proof above shows that for $i=1,2$, $\{(\Phi_{L_i}(x),\Phi_{L_i}'(x)): x\geq 0\}$ forms part of $\Gamma^+_i$. Therefore
\[
\Phi_{L_i}'(0)=\Psi_i(\alpha) \mbox{ for } i=1,2.
\]
Due to \eqref{alpha*} we obtain
\[
\Phi_U'(0)=\eta_1\Phi_{L_1}'(0)+\eta_2\Phi_{L_2}'(0)=\Psi(\alpha)>\Psi_3(\alpha).
\]
Thus the point $(\alpha, \Phi'_U(0))$ lies above $\Gamma_3^*$. Now $\Gamma_U:=\{(\Phi_U(x),\Phi_U'(x)): x\leq 0\}$ is the trajectory of \eqref{phi-psi} with $\mu=\mu_3$ starting from $(\alpha, \Phi'_U(0))$ moving in the negative direction as $x$ decreases from 0. By the phase plane result for cases (b) and (c), this trajectory intersects the line $\phi=0$ in its negative direction, that is, $\Phi_U(x)=0$ for some finite $x<0$, which is a contradiction to $\Phi_U(x)>0$ for all $x<0$. This proves the nonexistence result for $\alpha\in (0, \alpha_1^{*})$.

Suppose now $\alpha\in (\alpha_2^*, 1)$. For $i=1,2$, the trajectory $\Gamma^+_i$ from $(\alpha, \Psi_i(\alpha))$ in its positive direction
yields a solution $\phi_i(x)$ ($x\geq 0$) of \eqref{phi-eq} with $\mu=\mu_i$ satisfying
\[
\phi_i(0)=\alpha,\; \phi_i'(0)=\Psi_i(\alpha),\; \phi_i'>0,\; \phi_i(+\infty)=1, \; i=1,2.
\]
By \eqref{alpha*},
$
\Psi(\alpha)<\Psi_3(\alpha)$, and hence the point $(\alpha, \Psi(\alpha))$ lies below $\Gamma_3^*$ and above the $\phi$-axis.
Therefore the trajectory $\Gamma_\alpha$ of \eqref{phi-psi} with $\mu=\mu_3$ starting from $(\alpha, \Psi(\alpha))$ in its negative direction
yields a solution $\phi_3(x)$ ($x\leq 0$) of \eqref{phi-eq} with $\mu=\mu_3$ satisfying
\[
\phi_3(0)=\alpha,\; \phi_3'(0)=\Psi(\alpha)=\eta_1\phi_1'(0)+\eta_2\phi_2'(0),\; \phi_3'>0,\; \phi_3(-\infty)=0.
\]
Thus $(\Phi_{L_1}, \Phi_{L_2}, \Phi_U):=(\phi_1, \phi_2, \phi_3)$ is a solution of \eqref{SS-1-2} with $\alpha\in (\alpha_2^*, 1)$, and satisfies further
\[
\phi_i'>0 \mbox{ for } i=1,2,3, \;\; \phi_1(+\infty)=\phi_2(+\infty)=1,\; \phi_3(-\infty)=0.
\]

If $(\Phi_{L_1}, \Phi_{L_2}, \Phi_U)$ is any solution of \eqref{SS-1-2} with $\alpha\in (\alpha_2^*, 1)$, then for $i=1,2$, $\{(\Phi_{L_i}(x),\Phi_{L_i}'(x)): x\geq 0\}$ has to be part of $\Gamma^+_i$ as any other trejectory of \eqref{phi-psi} with $\mu=\mu_i$ intersects the lines $\phi=0$ or $\phi=1$ in the positive direction. This  implies that
$(\Phi_{L_1}, \Phi_{L_2}, \Phi_U)$ must agree with the above constructed $(\phi_1, \phi_2, \phi_3)$. We have thus proved the uniqueness.

\medskip

 We now consider case {\bf (III-2)}. In this case the trajectories $\Gamma_1^+, H_2$ and $\Gamma_3^*$ are important for our analysis. Let
\[
\psi=\Psi_1(\phi),\; \psi=\Psi_2(\phi),\;\psi=\Psi_3(\phi)
\]
be the equations of $\Gamma^+_1, H_2, \Gamma_3^*$ in the $\phi\psi$-plane, respectively.
Then define
\[
\Psi(\phi):=\eta_1\Psi_1(\phi)+\eta_2\Psi_2(\phi).
\]
We have
\[
\Psi(0)>0=\Psi_3(0),\; \Psi(1)=0<\Psi_3(1).
\]
Therefore there exists $\alpha_1^{*}, \alpha_2^*\in (0, 1)$ with $\alpha_1^{*}\leq  \alpha_2^*$ such that
\eqref{alpha*} holds. The rest of the proof is parallel to case (III-1) above, and is thus omitted.

To complete the proof, it remains to prove
\eqref{phiU}. From our proofs above, we know that $(\Phi_U(x), \Phi_U'(x))$ goes to $(0,0)$ along a trajectory $\Gamma_U$ of \eqref{phi-psi} with $\mu=\beta_U^{-2}$. When $\alpha\in\{\alpha_1^*, \alpha_2^*\}$, $\Gamma_U$ is part of $\Gamma^*$ which is tangent to $\ell^+: \psi=\frac12 (1+\sqrt{1-4\beta_U^{-2}}\,)\phi$ at $(0,0)$,  and when $\alpha\in (\alpha_2^*, 1)$, $\Gamma_U$ is below $\Gamma^*$ and is tangent to $\ell^-: \psi=\frac 12 (1-\sqrt{1-4\beta_U^{-2}}\,)\phi$ at $(0,0)$.
From these facts we obtain \eqref{phiU} by a standard calculation.

The proof of the lemma is now complete.
\end{proof}

\section{Further discussions}

We have rather completely analysed the dynamical behaviour of the Fisher-KPP equation over three sample graphs, to gain insight to the spreading and growth of a new  or invasive species  in a local river system represented by these simple graphs, which may be a part of a  bigger river network. The approach is motivated by the classical works  \cite{F} and  \cite{KPP}, and by several recent works on similar equations but over finite graphs.

The dynamics of the models over finite graphs is nice and simple, but does not capture features that describe the invasion behaviour of the species. Our work  indicates that the model
over  infinite graphs is much more difficult to analyse, but is capable of capturing several new features of the population dynamics, even in the very simple yet basic cases considered here. The long-time dynamics of the model in all the cases considered here can only be one of  three types: {\it washing out, persistence at carrying capacity, and persistence below carrying capacity}. And which case happens is determined completely by the water flow speeds of the river branches in the local river system (see Remark \ref{re1} for details), and the water flow speeds of the branches are affected by both the topological structure of the graph representing the river branches and by the cross section area of each river branche, as explained in the introduction. In a practical situation, these speeds can be easily obtained, though the threshold speed $c_*$ need to be worked out carefully due to the rescaling in the model.

Since the local river system is represented by graphs with edges of infinite length, the model is only good to describe the population dynamics during the spreading period within the local river system; once the spreading has gone beyond the local system, the population dynamics need to be modelled differently. Therefore, the phenomena revealed in this paper provide insights to the population dynamics only during the spreading phase of the species.
Our results suggest that in a complex river system, the population distribution of the species during its spreading phase can be rather varied, depending largely on the local environment, and in this research, we have focused particularly on the local river structure and the water flow speed (which are closely related). Moreover, in a local system, the water flow speeds in the upper river branches play a more important role than that of the lower river branches  in determining the local population distribution during its spreading phase, which may appear not evident, but seems reasonable, since high population density in lower rivers due to better living environment (slow water flow speed  here) is less easy to  propagate to upper rivers than the other way round.

In a sense, we have succeeded in using some simple models to reveal the effect of the structure of a local river system on the population dynamics of a new species,
although the spreading speed issue is not treated yet. But much generality is sacrificed here.
For example,  each river branch is treated like a tube with a constant cross section, without spatial variation, and the water flow speed in each river branch is thus a constant. Seasonal variation is also ignored. Nevertheless, we believe that the basic features revealed in these simple models should retain in many realistic  situations, for example, when the environment is time-periodic.

Since the very complicated set of stationary solutions are analysed through a phase plane argument, the method is difficult to extend to treat  heterogenous situations. On the other hand, since for each setting of water flow speeds, only one stationary solution is selected to attract all the solutions as time goes to infinity, to gain a complete understanding of the long-time behaviour of the model, it is not necessary to know all the stationary solutions. This could be a direction for further extension of the research here.

\end {document}